\numberwithin{equation}{section}
\newtheorem{thm}{Theorem}[section]
\newtheorem{rem}[thm]{Remark}
\newtheorem{lem}[thm]{Lemma}
\newtheorem{dfn}[thm]{Definition}
\subjclass[2010]{Primary 81-08; Secondary 65K10}
\keywords{Hartree-Fock equation, SCF method, convergence analysis, \L ojasiewicz inequality}
\title[Convergence of SCF sequences]{Convergence of SCF sequences for the Hartree-Fock equation}
\author{Sohei Ashida}
\begin{document}
\maketitle

\begin{abstract}
The Hartree-Fock equation is a fundamental equation in many-electron problems. It is of practical importance in quantum chemistry to find solutions to the Hartree-Fock equation. The self-consistent field (SCF) method is a standard numerical calculation method to solve the Hartree-Fock equation. In this paper we prove that the sequence of the functions obtained in the SCF procedure is composed of a sequence of pairs of functions that converges after multiplication by appropriate unitary matrices, which strongly ensures the validity of the SCF method. A sufficient condition for the limit to be a solution to the Hartree-Fock equation after multiplication by a unitary matrix is given, and the convergence of the corresponding density operators is also proved. The method is based mainly on the proof of approach of the sequence to a critical set of a functional, compactness of the critical set, and the proof of the \L ojasiewicz inequality for another functional near critical points.
\end{abstract}

\section{Introduction and statement of the result}\label{firstsec}
Let us consider a molecule with $n$ nuclei and $N$ electrons, where $n, N\in\mathbb N$. A fundamental problem in quantum chemistry is the eigenvalue problem of the Hamiltonian
$$H:=-\sum_{i=1}^N\Delta_{x_i}-\sum_{i=1}^N\sum_{l=1}^n\frac{Z_l}{|x_i-R_l|}+\sum_{1\leq i<j\leq N}\frac{1}{|x_i-x_j|},$$
acting on $L^2(\mathbb R^{3N})$, where $x_i\in\mathbb R^3$ is the position of the $i$th electron, and $R_l$ and $Z_l$ are the position and the atomic number of the $l$th nucleus respectively. By the min-max principle (see e.g. \cite{RS4}) the eigenvalue problem is equivalent to the problem to find the critical values and the critical points of the quadratic form $\langle\Psi,H\Psi\rangle$, where $\Psi\in H^2(\mathbb R^{3N})$, $\lVert\Psi\rVert=1$.
The Hartree-Fock functional is obtained by restriction of the quadratic form to the set of all Slater determinants
$$\Psi:=(N!)^{-1/2}\sum_{\sigma\in S_N}(\mathrm{sgn}\, \sigma)\varphi_1(x_{\sigma(1)})\dotsm\varphi_N(x_{\sigma(N)}),$$
where $\varphi_i\in H^2(\mathbb R^3)$, $1\leq i\leq N$ and $\langle \varphi_i,\varphi_j\rangle =\delta_{ij}$. In other words, the Hartree-Fock functional is a functional defined by $\hat{\mathcal E}(\Phi):=\langle \Psi,H\Psi\rangle$ for $\Phi\in\mathcal W$, where
$$\mathcal W:=\left\{\Phi={}^t(\varphi_1,\dots,\varphi_N)\in \bigoplus_{i=1}^NH^2(\mathbb R^3): \langle \varphi_i,\varphi_j\rangle=\delta_{ij}\right\},$$
and $\Psi$ is the Slater determinant constructed from $\Phi={}^t(\varphi_1,\dots,\varphi_N)$. Here $\bigoplus_{i=1}^N H^2\newline(\mathbb R^3)$ is the Hilbert space equipped with the inner product $\sum_{i=1}^N\langle \varphi_i,(1-\Delta)^2\tilde\varphi_i\rangle$ for $\Phi={}^t(\varphi_1,\dots,\varphi_N)$ and $\tilde\Phi={}^t(\tilde\varphi_1,\dots,\tilde\varphi_N)$. The functional $\hat{\mathcal E}(\Phi)$ can be written explicitly as
\begin{align*}
\hat{\mathcal E}(\Phi)=\sum_{i=1}^N\langle \varphi_i,h\varphi_i\rangle&+\frac{1}{2}\int\int\rho(x)\frac{1}{|x-y|}\rho(y)dxdy\\
&-\frac{1}{2}\int\int\frac{1}{|x-y|}|\rho(x,y)|^2dxdy,
\end{align*}
where $h:=-\Delta+V$, $V(x):=-\sum_{l=1}^n\frac{Z_l}{|x-R_l|}$,
$$\rho(x):=\sum_{i=1}^N|\varphi_i(x)|^2,$$
is the density, and
$$\rho(x,y):=\sum_{i=1}^N\varphi_i(x)\varphi_i^*(y),$$
is the density matrix. Here and henceforth, $u^*$ denotes the complex conjugate for a function $u$. In this paper we will consider for simplicity of notation the spinless functions $\varphi_i$, although the results in this paper is trivially adapted to spin-dependent functions with only notational changes.

The critical values of the Hartree-Fock functional give approximations to the eigenvalues of $H$, and the corresponding critical points are used for further approximations. Let us recall the definition of critical values and critical points of a functional $\hat{\mathcal E}(\Phi):\mathcal W\to\mathbb R$ (see e.g. \cite[Definition 43.20]{Ze3}). Let $\mathcal C_{\Phi}$ be the set of all curves $c:(-1,1)\to\bigoplus_{i=1}^NH^2(\mathbb R^3)$ such that $c(t)\in \mathcal W$ for any $t\in(-1,1)$, $c(0)=\Phi$ and $c'(0)$ exists.
The point $\Phi\in \mathcal W$ is called a critical point of $\hat{\mathcal E}(\Phi)$, if $d\hat{\mathcal E}(c(t))/dt\vert_{t=0}=0$ for any $c\in \mathcal C_{\Phi}$. A real number $\Lambda\in \mathbb R$ is called a critical value of $\hat{\mathcal E}(\Phi)$ if there exists a critical point $\Phi'$ of $\hat{\mathcal E}(\Phi)$ such that $\Lambda=\hat{\mathcal E}(\Phi')$. By the method of Lagrange multipliers (see e.g. \cite[Proposition 43.21]{Ze3} and also \cite[Section 2]{As}) we can see that $\Phi$ is a critical point of $\hat{\mathcal E}(\Phi)$ if and only if there exists an Hermitian matrix $(\epsilon_{ij})$ such that $\Phi$ satisfies the equation
$$\mathcal F(\Phi)\varphi_i=\sum_{j=1}^N\epsilon_{ij}\varphi_j,\ 1\leq i\leq N.$$
Here $\mathcal F(\Phi)$ is an operator called Fock operator and defined by $\mathcal F(\Phi):=h+R^{\Phi}-S^{\Phi}$, where $R^{\Phi}$ is the multiplication operator by
$$R^{\Phi}(x):=\sum_{i=1}^N\int|x-y|^{-1}|\varphi_i(y)|^2dy=\sum_{i=1}^NQ_{ii}^{\Phi}(x),$$
with
$$Q_{ij}^{\Phi}(x):=\int|x-y|^{-1}\varphi_j^*(y)\varphi_i(y)dy,$$
and
$$S^{\Phi}:=\sum_{i=1}^NS_{ii}^{\Phi},$$
with
$$(S_{ij}^{\Phi}w)(x):=\left(\int|x-y|^{-1}\varphi_j^*(y)w(y)dy\right)\varphi_i(x),$$
for $w\in L^2(\mathbb R^3)$. We also define an operator $\mathcal G(\Phi)$ by $\mathcal G(\Phi):=R^{\Phi}-S^{\Phi}$. Then $\mathcal F(\Phi)$ can be written as $\mathcal F(\Phi)=h+\mathcal G(\Phi)$. The matrix $(\epsilon_{ij})$ is diagonalized by an $N\times N$ unitary matrix $A$ as $A(\epsilon_{ij})A^{-1}=\mathrm{diag}\, [\epsilon_1,\dots,\epsilon_N]$, and if we define new functions $\Phi^{\mathrm{New}}:={}^t(\varphi^{\mathrm{New}}_1,\dots,\varphi^{\mathrm{New}}_N)$ by $\varphi^{\mathrm{New}}_i=\sum_{j=1}^NA_{ij}\varphi^{\mathrm{Old}}_j$ from the old one $\Phi^{\mathrm{Old}}:={}^t(\varphi^{\mathrm{Old}}_1,\dots,\varphi^{\mathrm{Old}}_N)$, $\Phi^{\mathrm{New}}$ satisfies the equation
\begin{equation}\label{myeq1.1}
\mathcal F(\Phi)\varphi_i=\epsilon_i\varphi_i,\ 1\leq i\leq N,
\end{equation}
where $(\epsilon_1,\dots,\epsilon_N)\in\mathbb R^N$ and $\mathrm{diag}\, [\epsilon_1,\dots,\epsilon_N]$ is the diagonal matrix with the diagonal elements $\epsilon_1,\dots,\epsilon_N$. The equation \eqref{myeq1.1} is called Hartree-Fock equation. Hence a solution $\Phi'$ to the Hartree-Fock equation is a critical point of $\hat{\mathcal E}(\Phi)$, and $\Lambda\in\mathbb R$ is a critical value of $\hat{\mathcal E}(\Phi)$ if and only if there exists a solution $\Phi'$ to the Hartree-Fock equation such that $\Lambda=\hat{\mathcal E}(\Phi')$. The Hartree-Fock equation was introduced by Fock \cite{Fo} and Slater \cite{Sl} independently, after Hartree \cite{Ha} introduced the Hartree equation that ignored the antisymmetry with respect to exchange of variables. 
Hereafter, let us call the tuple $\mathbf e:=(\epsilon_1,\dots,\epsilon_N)$ an orbital energy, if there exists a tuple of eigenfunctions $\Phi={}^t(\varphi_1,\dots,\varphi_N)\in\mathcal W$ of $\mathcal F(\tilde \Phi)$ associated with $\mathbf e$ for some $\tilde \Phi\in\mathcal W$, i.e. we have
$$\mathcal F(\tilde\Phi)\varphi_i=\epsilon_i\varphi_i,\ i=1\leq i\leq N.$$
(Since we will consider sequences of the tuple in this paper, it would be more convenient to call the tuple an orbital energy than each $\epsilon_i$.)
In particular, the tuple $(\epsilon_1,\dots,\epsilon_N)$ of the numbers in the right-hand side of the Hartree-Fock equation \eqref{myeq1.1} is an orbital energy.

The Hartree-Fock equation can not be solved exactly even for small $n$ and $N$. A standard numerical calculation method to solve the equation is the self-consistent field (SCF) method. In the SCF method we set an initial function $\Phi^0={}^t(\varphi_1^0,\dots,\varphi_N^0)$ and repeat the following iterative procedure until the sequence $\{\Phi^k\}$ obtained in the procedure converges. Let $\varphi_1^{k+1},\dots \varphi_N^{k+1}$ be an orthonormal set of eigenfunctions of $\mathcal F(\Phi^k)$ associated with the $N$ smallest eigenvalues (including multiplicity) $\epsilon^{k+1}_1,\dots,\epsilon_N^{k+1}$, i.e.  they satisfy
$$\mathcal F(\Phi^k)\varphi_i^{k+1}=\epsilon_i^{k+1}\varphi_i^{k+1},\ 1\leq i\leq N.$$
(Assume here that we can choose such eigenfunctions, which is justified under the uniform well-posedness condition introduced later.) We set the next function as $\Phi^{k+1}:={}^t(\varphi_1^{k+1},\dots,\varphi_N^{k+1})$. 
Note that the choice of the eigenfunctions is not unique, because the multiplication by a complex number with the absolute value $1$ makes another eigenfunction, and if an eigenvalue is degenerated, multiplication by a unitary matrix to an orthonormal basis of the corresponding eigenspace generates another orthonormal basis. However, we suppose that particular eigenfanctions have been chosen in the SCF procedure.
Note that $\mathbf e^k:=(\epsilon^{k}_1,\dots,\epsilon_N^{k})$ is the orbital energy associated with $\Phi^k$. Let us call the sequence $\{\Phi^k\}$ SCF sequence.

For the analysis of the SCF sequence it is helpful to introduce operators called density operators. For $\Phi={}^t(\varphi_1,\dots,\varphi_N)\in \mathcal W$ we define the density operator $D_{\Phi}\in\mathcal L(L^2(\mathbb R^3))$ by
$$(D_{\Phi}w)(x):=\sum_{i=1}^N\left(\int\varphi_i^*(y)w(y)dy\right)\varphi_i(x),$$
for $w\in L^2(\mathbb R^3)$. We denote by $\mathcal P$ the set of operators
$$\mathcal P:=\{D\in\mathcal T_2: \mathrm{Ran}\, D\subset H^2(\mathbb R^3),\ D^2=D=D^*,\ \mathrm{Tr}\, (D) = N\},$$
where $D^*$ is the adjoint operator of $D$ and $\mathcal T_2$ is the set of all Hilbert-Schmidt operators in $L^2(\mathbb R^3)$ equipped with the norm $\lVert D\rVert_{2}:=(\mathrm{Tr}\, (D^*D))^{1/2}$ (see e.g. \cite{RS}). We can easily confirm that $D_{\Phi}\in \mathcal P$ for $\Phi\in \mathcal W$. 
For $D\in\mathcal P$ let us define an operator $G(D)$ by
$$(G(D)w)(x):=\mathrm{Tr}\, (|x-y|^{-1}D)w(x)-D(|x-y|^{-1}w(y)),$$
where $|x-y|^{-1}$ is a multiplication operator with respect to $y$ with a parameter $x$, and the trace is taken with respect to $y$. Then we can see that
$$\mathcal G(\Phi)=G(D_{\Phi}).$$
Moreover, we have
$$\hat{\mathcal E}(\Phi)=\hat E(D_{\Phi}),$$
with
$$\hat E(D):=\mathrm{Tr}\, (hD) +\frac{1}{2}\mathrm{Tr}\, (G(D)D).$$
If $\Phi={}^t(\varphi_1,\dots,\varphi_N)\in \mathcal W$ and $\tilde \Phi={}^t(\tilde \varphi_1,\dots,\tilde \varphi_N)\in\mathcal W$ satisfy $D_{\Phi}=D_{\tilde \Phi}$, then $\Phi$ and $\tilde\Phi$ are orthonormal bases of the same space $\mathrm{Ran}\, D_{\Phi}$. Hence there exists an $N\times N$ unitary matrix $A$ such that $A\Phi=\tilde \Phi$. Since the Slater determinant $\Psi$ of $\Phi$ is written as a determinant
$$\Psi=\begin{vmatrix}
\varphi_1(x_1) & \cdots & \varphi_1(x_N)\\
\vdots & \ddots & \vdots\\
\varphi_N(x_1) & \cdots & \varphi_N(x_N)
\end{vmatrix},$$
we have $\tilde \Psi=|A|\Psi$, where $\tilde\Psi$ is the Slater determinant of $\tilde \Phi$. Therefore, the possible difference between $\Psi$ and $\tilde\Psi$ is only a multiplication by a complex number with the absolute value $1$. Since in the approximation of eigenfunctions of $H$ we use $\Psi$ rather than $\Phi$, we can conclude that the multiplication by the unitary matrix $A$ is not important. In addition, since $\mathcal G(\Phi)$ is determined by $D_{\Phi}$ through $\mathcal G(\Phi)=G(D_{\Phi})$, $\Phi^{k+1}$ in the SCF sequence is determined only by $D_{\Phi^k}$. Consequently, the convergence of the density operators $D_{\Phi^k}$ is more fundamental than that of $\Phi^k$ in a certain sense.

Convergence of the SCF sequences is rarely studied from a mathematically rigorous standpoint. An important mathematically rigorous progress has been made by Canc\`es and Le Bris \cite{CB}. They introduced a functional $E(D,\tilde D):\mathcal P\times\mathcal P\to\mathbb R$ in \cite{CB} defined by
$$E(D,\tilde D):=\mathrm{Tr}\, (hD)+\mathrm{Tr}\, (h\tilde D)+\mathrm{Tr}\, (G(D)\tilde D),$$
which is symmetric with respect to $D$ and $\tilde D$. Let us also define a functional $ {\mathcal E}(\Phi,\tilde\Phi):\mathcal W\times\mathcal W\to\mathbb R$ by
\begin{align*}
{\mathcal E}(\Phi,\tilde\Phi)&:=\sum_{i=1}^N\langle\varphi_i,h\varphi_i\rangle+\sum_{i=1}^N\langle\tilde\varphi_i,h\tilde\varphi_i\rangle+\sum_{i=1}^N\langle\tilde \varphi_i,\mathcal G(\Phi)\tilde\varphi_i\rangle\\
&=\sum_{i=1}^N\langle\varphi_i,h\varphi_i\rangle+\sum_{i=1}^N\langle\tilde \varphi_i,\mathcal F(\Phi)\tilde\varphi_i\rangle\\
&=\sum_{i=1}^N\langle\varphi_i,\mathcal F(\tilde\Phi)\varphi_i\rangle+\sum_{i=1}^N\langle\tilde \varphi_i,h\tilde\varphi_i\rangle,
\end{align*}
which is symmetric with respect to $\Phi={}^t(\varphi_1,\dots,\varphi_N)$ and $\tilde\Phi={}^t(\tilde\varphi_1,\dots,\tilde\varphi_N)$. Then we have
$${\mathcal E}(\Phi,\tilde\Phi)=E(D_{\Phi},D_{\tilde\Phi}).$$
Note also that $\Phi^{k+1}={}^t(\varphi_1^{k+1},\dots,\varphi_N^{k+1})$ is the minimizer of ${\mathcal E}(\Phi^k,\Phi)$ with respect to $\Phi\in\mathcal W$. The result in \cite{CB} is that there exists a convergent subsequence $\{(D_{\Phi^{k_j}},D_{\Phi^{k_j+1}})\}$ of the sequence $\{(D_{\Phi^{k}},D_{\Phi^{k+1}})\}$ of pairs of the density operators constructed from the SCF sequence. In their analysis the fact that $E(D_{\Phi^k},D_{\Phi^{k+1}})$ is decreasing with respect to $k$ plays an important role. Their analysis is also based on the condition called uniform well-posedness. We say that a SCF sequence $\{\Phi^k\}$ is uniformly well posed, if the following condition (UWP) is fulfilled.
\begin{itemize}
\item[(UWP)] $\mathcal F(\Phi^k)$ has at least $N$ isolated eigenvalues (including multiplicity) below $\inf\sigma_{ess}(\mathcal F(\Phi^k))$ for any $k$, and there exists a constant $\gamma>0$ such that the distance between the set of the $N$ smallest eigenvalues (including multiplicity) of $\mathcal F(\Phi^k)$ and the rest of the spectrum of $\mathcal F(\Phi^k)$ is larger than or equal to $\gamma$ for any $k$, where $\sigma_{ess}(B)$ is the essential spectrum of $B$.
\end{itemize}
Note that $\sigma_{ess}(\mathcal F(\Phi^k))=[0,\infty)$ (cf. the proof of Lemma \ref{gammabound}).

Although the result in \cite{CB} was the first mathematically rigorous important step in the study of the convergence of the SCF method, existence of a convergent subsequence is essentially rather different from convergence of the sequence itself. The sequence itself may contain subsequences that even tend to infinity. In particular, in finite-dimensional spaces the existence of a convergent subsequence is an assertion strictly weaker than the assertion that the sequence is merely bounded. Another important mathematically rigorous progress has been achieved by Levitt \cite{Le} under the Galerkin discretization, i.e. finite-dimensional approximation. It is proved in \cite{Le} that $\{(D_{\Phi^{k}},D_{\Phi^{k+1}})\}$ itself converges under the finite-dimensional approximation. In \cite{Le}, in addition to the uniform well-posedness an inequality called \L ojasiewicz inequality plays a crucial role. The \L ojasiewicz inequality is the result as follows. Let $m\in\mathbb N$ and $f(x):\mathbb R^m\to\mathbb R$ be an analytic function. Then for each $x_0\in\mathbb R^m$ there exists a neighborhood $U$ of $x_0$ and two constants $\kappa>0$ and $\theta\in(0,1/2]$ such that when $x\in U$,
$$|f(x)-f(x_0)|^{1-\theta}\leq \kappa\lVert \nabla f(x)\rVert.$$

In fact, the proof of the convergence of SCF sequences with finite-dimensional approximation does not ensure the validity of the SCF method, because in general, in numerical calculations of differential equations by discretization there could exist "false solutions" that are not approximations of any true solutions to the equations. They can arise from the finite-dimensional approximations. For example, it has been theoretically proved that the equation
$$-\Delta u=u^2,$$
in $\Omega:=(0,a)\times(0,1/a),\ a>0$ with the boundary condition $u=0$ on $\partial \Omega$ does not have any  solution asymmetric with respect to the center of the region $\Omega$ in the direction of the first or second variable (cf. \cite{GNN}).  However, in a numerical calculation an asymmetric "false solution" to the equation was observed (cf. \cite{BPM}). If a sequence in the framework of the finite-dimensional approximation converges to such "false solution", the convergence is not desirable at all.

In this paper we prove that $\{(D_{\Phi^{k}},D_{\Phi^{k+1}})\}$ itself converges without any finite-dimensional approximation. Moreover, we prove that for appropriate $N\times N$ unitary matrices $A_k$, $k\geq 0$ the sequence $\{(A_{2k}\Phi^{2k},A_{2k+1}\Phi^{2k+1})\}$ converges with respect to the Sobolev norm in the direct sum of $H^2(\mathbb R^3)$. We also give a condition that a tuple of the eigenfunctions of $\mathcal F(\Xi^{\infty})$ is a solution to the Hartree-Fock equation, where
$$\Xi^{\infty}:=\lim_{k\to\infty}A_{2k}\Phi^{2k}.$$
Unlike the result above, the present result means the convergence to true solutions (at least to critical points of $\mathcal E(\Phi,\tilde\Phi)$), and this would also be the first step for the study to avoid "false solutions" as above (if they exist) in finite-dimensional approximations. The followings are the main results.

\begin{thm}\label{main}
Let $\{\Phi^k\}$ be a uniformly well posed SCF sequence such that the initial function $\Phi^0={}^t(\varphi_1^0,\dots,\varphi_n^0)$ satisfies
$$\lVert \langle x\rangle\varphi_i^0(x)\rVert\leq C_0,\ 1\leq i\leq N,$$
for some $C_0>0$.
Then there exist $\Xi^{\infty},\ \tilde\Xi^{\infty}\in \mathcal W$ such that 
\begin{align*}
&\lim_{k\to\infty}\lVert D_{\Phi^{2k}}-D_{\Xi^{\infty}}\rVert_{2}=0,\\
&\lim_{k\to\infty}\lVert D_{\Phi^{2k+1}}-D_{\tilde\Xi^{\infty}}\rVert_{2}=0.
\end{align*}
Moreover, there exists a sequence $\{A_k\}$ of $N\times N$ unitary matrices such that
\begin{align*}
&\lim_{k\to\infty}\lVert A_{2k}\Phi^{2k}-\Xi^{\infty}\rVert_{\bigoplus_{i=1}^NH^2(\mathbb R^{3})}=0,\\
&\lim_{k\to\infty}\lVert A_{2k+1}\Phi^{2k+1}-\tilde\Xi^{\infty}\rVert_{\bigoplus_{i=1}^NH^2(\mathbb R^{3})}=0.
\end{align*}
\end{thm}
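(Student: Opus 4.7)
My plan is to exploit the inherent two-step character of the SCF iteration: $\Phi^{k+1}$ minimizes $\mathcal E(\Phi^k,\cdot)$, so $E(D_{\Phi^k},D_{\Phi^{k+1}})$ (symmetric in its arguments) is what decreases monotonically by the Canc\`es--Le Bris argument, and the even/odd subsequences of density operators should be treated as separate trajectories of a period-two discrete dynamical system on $\mathcal P\times \mathcal P$. I would organize the proof in four stages: (a) a priori compactness of the trajectory, (b) approach of the pairs $(D_{\Phi^{2k}},D_{\Phi^{2k+1}})$ to a compact critical set of $E$, (c) a \L ojasiewicz--Simon inequality for $E$ used in the classical telescoping argument to upgrade subsequential to full convergence, and (d) lifting of density-operator convergence to orbital convergence modulo a unitary gauge.

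For stage (a), I would propagate bounds along the iteration. A bound in $\bigoplus_{i=1}^N H^2(\mathbb R^3)$ on $\Phi^k$ follows from the fact that $\varphi_i^{k+1}$ is an eigenfunction of $\mathcal F(\Phi^k)=h+\mathcal G(\Phi^k)$, whose relevant eigenvalues are controlled by $E(D_{\Phi^k},D_{\Phi^{k+1}})\leq E(D_{\Phi^0},D_{\Phi^1})$ together with the (UWP) gap; the weighted bound $\lVert\langle x\rangle\varphi_i^k\rVert\leq C$ is propagated inductively via a commutator identity between $\langle x\rangle$ and $\mathcal F(\Phi^{k-1})$, where only the Laplacian and the exchange term $S^{\Phi^{k-1}}$ give nontrivial contributions, and both can be absorbed by the already-controlled $H^2$ norm. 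The combined $H^2\cap L^2(\langle x\rangle^2\,dx)$ bound yields precompactness of $\{\Phi^k\}$ in $\bigoplus H^2$ modulo the $U(N)$ gauge, and in particular precompactness of $\{D_{\Phi^k}\}$ in $\mathcal T_2$.

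For stage (b), monotonicity of $E$ plus boundedness gives a limit value $E_\infty$; using that $\Phi^{k+1}$ is the minimizer of $\mathcal E(\Phi^k,\cdot)$ together with (UWP) should force $\lVert D_{\Phi^{k+2}}-D_{\Phi^k}\rVert_2\to 0$, so each accumulation point of $(D_{\Phi^{2k}},D_{\Phi^{2k+1}})$ is a period-two fixed point of the SCF dynamics, equivalently a critical point of $E$ restricted to $\mathcal P\times \mathcal P$ at level $E_\infty$; by precompactness this critical set $\mathcal C_{E_\infty}$ is compact. Stage (c) is the main analytic input: near each $(D^\star,\tilde D^\star)\in\mathcal C_{E_\infty}$ I would establish
\begin{equation*}
\bigl|E(D,\tilde D)-E(D^\star,\tilde D^\star)\bigr|^{1-\theta}\leq \kappa\,\lVert\nabla_{\mathrm{tang}}E(D,\tilde D)\rVert,
\end{equation*}
where $\nabla_{\mathrm{tang}}$ is the gradient tangent to the constraint manifold. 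Since $E$ is polynomial in $(D,\tilde D)$, analyticity is automatic; the substantive step is showing the constrained Hessian is Fredholm of index zero, reducing the infinite-dimensional \L ojasiewicz inequality to the classical finite-dimensional one by Lyapunov--Schmidt reduction, with Fredholmness following from compactness of the two-body interaction integral operators. Compactness of $\mathcal C_{E_\infty}$ then supplies uniform constants along the trajectory, and the standard Simon--\L ojasiewicz telescoping argument gives $\sum_k \lVert D_{\Phi^{k+2}}-D_{\Phi^k}\rVert_2<\infty$, and hence convergence of both subsequences of density operators.

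Stage (d) is a gauge-fixing step: once $D_{\Phi^{2k}}\to D_{\Xi^\infty}$ in $\mathcal T_2$, pick any orthonormal basis $\Xi^\infty=(\xi_1^\infty,\dots,\xi_N^\infty)$ of $\mathrm{Ran}\,D_{\Xi^\infty}$ and define $A_{2k}$ via the polar factor of the Gram matrix $(\langle \xi_i^\infty,\varphi_j^{2k}\rangle)_{ij}$, which is invertible for $k$ large; trace-class convergence of the density operators combined with the $H^2$ precompactness from stage (a) upgrades this to $H^2$ convergence of $A_{2k}\Phi^{2k}$, and the odd-index case is identical. I expect the main obstacle to be stage (c): proving the \L ojasiewicz--Simon inequality in the genuine continuum setting (without Galerkin truncation as in \cite{Le}) requires careful handling of the $U(N)$ gauge degeneracy of $E$ on $\mathcal W\times\mathcal W$, the nonlinearity of the Stiefel-type constraint, and the interaction of that constraint with the Fredholm analysis of the Hessian. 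The abstract's reference to proving a \L ojasiewicz inequality for \emph{another} functional suggests that the author does not work with $E$ directly but instead reformulates the problem through a carefully chosen auxiliary functional whose critical structure is cleaner, and identifying the right such functional is likely the pivotal construction.
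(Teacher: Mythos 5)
Your outline correctly identifies the four structural pillars (a priori bounds, approach to a compact critical set, a \L ojasiewicz--Simon inequality, gauge-fixing), and you rightly flag that the pivotal issue is which functional the \L ojasiewicz inequality is established for. The paper does exactly what you guess at the end: it abandons $E(D,\tilde D)$ on $\mathcal P\times\mathcal P$ and instead works with the Lagrange-multiplier functional $f(\Phi,\tilde\Phi,\mathbf e,\tilde{\mathbf e}) := \mathcal E(\Phi,\tilde\Phi)-\sum_i\epsilon_i(\lVert\varphi_i\rVert^2-1)-\sum_i\tilde\epsilon_i(\lVert\tilde\varphi_i\rVert^2-1)$ on the unconstrained space $Y_1 = (\bigoplus H^2)\oplus(\bigoplus H^2)\oplus\mathbb R^N\oplus\mathbb R^N$, precisely because (as the paper says explicitly) the Fr\'echet second derivative of $\hat E$ on density operators is an operator-to-operator map and intractable. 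That said, there are three concrete ways your plan diverges from what actually works.

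\textbf{The exponent and the reduction step.} You propose obtaining a general \L ojasiewicz exponent $\theta\in(0,1/2]$ by Lyapunov--Schmidt reduction to the classical finite-dimensional \L ojasiewicz theorem on $\ker F'(x^c)$. The paper does not do this, and for a good reason: the finite-dimensional \L ojasiewicz theorem would require checking analyticity of the reduced functional and would hand you an uncontrolled exponent. Instead, Lemma \ref{Lojasiewicz} proves the exponent is \emph{exactly} $1/2$ by a structural argument: the Fredholm decomposition $F'(x^c)=L+M$ plus the implicit function theorem exhibit the local zero set of $F$ as a $C^1$ graph $x_2=\omega(x_1)$ over the finite-dimensional kernel, and the crucial observation is that $f$ is \emph{constant} on that graph (equation \eqref{myeq6.3}), so Taylor expansion in the transverse $x_2$ direction alone gives $|f(x)-f(x^c)|\leq C\lVert x_2-\omega(x_1)\rVert^2$ while $\lVert F(x)\rVert\geq c\lVert x_2-\omega(x_1)\rVert$. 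No analyticity on the kernel is needed. The exponent $1/2$ is then essential downstream: the discrete telescoping in the proof of Theorem \ref{main} combines $\mathcal E(\Phi^k,\Phi^{k+1})-\mathcal E(\Phi^{k+1},\Phi^{k+2})\geq 2^{-1}\gamma\lVert D_{\Phi^{k+2}}-D_{\Phi^k}\rVert_2^2$ with $|\mathcal E-\mu|^{1/2}\lesssim\lVert D_{\Phi^{k+1}}-D_{\Phi^{k-1}}\rVert_2$ and the elementary $p^{1/2}-q^{1/2}\geq(p-q)/(2p^{1/2})$, yielding $\sum_k\alpha_{k+1}^2/\alpha_k<\infty$ where $\alpha_k=\lVert D_{\Phi^{k+1}}-D_{\Phi^{k-1}}\rVert_2$; the paper then invokes the series lemma (Lemma \ref{series}) to conclude $\sum\alpha_k<\infty$. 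With an arbitrary $\theta<1/2$ this specific chain would not close, and you would need a different elementary inequality and a different series argument.

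\textbf{Passing from density operators to orbitals.} Your gauge-fixing by the polar factor of the Gram matrix $(\langle\xi_i^\infty,\varphi_j^{2k}\rangle)$ would indeed produce unitaries, but the paper needs something sharper. Lemma \ref{Ubound} uses the singular value decomposition of the overlap matrix $\hat B_{ij}=\langle\varphi_i,\tilde\varphi_j\rangle$ to produce unitaries $A,\tilde A$ with the quantitative comparison $\lVert D_{\Phi}-D_{\tilde\Phi}\rVert_2\geq\lVert A\Phi-\tilde A\tilde\Phi\rVert_{L^2}$, and this inequality is used \emph{inside} the \L ojasiewicz step (estimating $\lVert\mathcal G(\Phi^{k+1})\varphi_i^k-\mathcal G(\Phi^{k-1})\varphi_i^k\rVert$ in \eqref{myeq7.4}) as well as in assembling the telescoping unitaries $\tilde A_k$ so that $\Xi^k:=A_k\Phi^k$ becomes a Cauchy sequence. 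You would also need the elliptic bootstrap (Lemma \ref{reduction}) to upgrade $L^2$-Cauchy to $H^2$-Cauchy via the eigenvalue equations; precompactness of $\{\Phi^k\}$ alone does not by itself give you Cauchy in $H^2$ for the full sequence.

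\textbf{The weighted bound.} You propose propagating $\lVert\langle x\rangle\varphi_i^k\rVert$ by a commutator identity. The paper's Lemma \ref{expbound} instead multiplies the eigenvalue equation by $\rho_m^2\varphi_i^{k+1}$ with a cutoff $\rho_m$, uses positivity of $R^{\Phi^k}-S^{\Phi^k}$ and the gap $\epsilon_i^{k+1}\leq-\gamma$ (Lemma \ref{gammabound}), and obtains a discrete contraction $C_{k+1}\leq 2^{-1}C_k+\check C$. A pure commutator argument with $\langle x\rangle$ is delicate because $[\langle x\rangle,S^{\Phi^k}]$ is not obviously small without the cutoff truncation, so the cutoff is doing real work that you would need to reproduce.
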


\begin{rem}
In practical calculations for usual molecules, a function fulfilling the decay condition is always chosen as the initial function $\Phi^0$. Thus only the uniform well-posedness is a substantial assumption.
\end{rem}

Since the goal of the SCF method is to find a solution to the Hartree-Fock equation, and $\Phi^{k+1}$ is a tuple of eigenfunctions of $\mathcal F(\Phi^k)$, we are interested in whether a tuple $\hat\Phi^{\infty}={}^t(\hat\varphi_1^{\infty},\dots,\hat\varphi_N^{\infty})$ of eigenfunctions of $\mathcal F(\Xi^{\infty})$ corresponding to the $N$ smallest eigenvalues is a solution to the Hartree-Fock equation. (Although the choice of $\hat\Phi^{\infty}$ is not unique, assume that a particular one has been chosen.) The following theorem is concerned with this problem.

\begin{thm}\label{maincor}
Suppose the same assumption as in Theorem \ref{main}, and let $\Xi^{\infty}$ and $\tilde\Xi^{\infty}$ be as in Theorem \ref{main}. Let $\gamma>0$ be the gap in the uniform well-posedness. Then:

(1) The distance between the set of the $N$ smallest eigenvalues of $\mathcal F(\Xi^{\infty})$ (resp., $\mathcal F(\tilde\Xi^{\infty})$) and the rest of the spectrum of $\mathcal F(\Xi^{\infty})$ (resp., $\mathcal F(\tilde\Xi^{\infty})$) is larger than or equal to $\gamma$.  Thus $\hat\Phi^{\infty}$ as above is well defined.

(2) There exists an $N\times N$ unitary matrix $A_{\infty}$ such that $\tilde\Xi^{\infty}=A_{\infty}\hat\Phi^{\infty}$. Moreover, if we denote by $(\hat\epsilon^{\infty}_1,\dots,\hat\epsilon_N^{\infty})$ the eigenvalues of $\mathcal F(\Xi^{\infty})$ associated with $\hat\Phi^{\infty}={}^t(\hat\varphi_1^{\infty},\dots,\hat\varphi_N^{\infty})$, we have $\hat\epsilon_i^{\infty}=\lim_{k\to\infty}\epsilon^{2k+1}_i$, $1\leq i\leq N$.

(3) If there exists an $N\times N$ unitary matrix $\Theta$ such that $\Xi^{\infty}=\Theta\tilde\Xi^{\infty}$, then $\hat\Phi^{\infty}$ is a solution to the Hartree-Fock equation associated with the orbital energy $(\hat\epsilon^{\infty}_1,\dots,\hat\epsilon_N^{\infty})$.

(4) If $\hat\Phi^{\infty}$ forms an orthonormal basis of the direct sum of the eigenspaces of the $N$ smallest eigenvalues of $\mathcal F(\hat\Phi^{\infty})$, then there exists an $N\times N$ unitary matrix $\Theta$ such that $\Xi^{\infty}=\Theta\tilde\Xi^{\infty}$.

(5) If $D_{\hat\Phi^{\infty}}=D_{\Xi^{\infty}}$, then there exists an $N\times N$ unitary matrix $\Theta$ such that $\Xi^{\infty}=\Theta\tilde\Xi^{\infty}$.
\end{thm}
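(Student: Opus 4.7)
Two facts drive the entire argument: $\mathcal F(\Phi)$ depends on $\Phi$ only through $D_\Phi$ (since $\mathcal G(\Phi) = G(D_\Phi)$), so unitary remixings of the orbitals leave $\mathcal F$ unchanged; and the map $D \mapsto G(D)$ is continuous in operator norm with respect to Hilbert-Schmidt perturbations of $D$. From Theorem \ref{main} and the $H^2$-convergences $A_{2k}\Phi^{2k} \to \Xi^\infty$, $A_{2k+1}\Phi^{2k+1} \to \tilde\Xi^\infty$, I therefore obtain $\mathcal F(\Phi^{2k}) \to \mathcal F(\Xi^\infty)$ and $\mathcal F(\Phi^{2k+1}) \to \mathcal F(\tilde\Xi^\infty)$ in a sense strong enough to apply the Riesz contour-integral formula for spectral projectors along a fixed contour enclosing the $N$ lowest eigenvalues inside the uniform gap $\gamma$.

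For (1), the $N$ lowest eigenvalues of $\mathcal F(\Phi^k)$ lie below the rest of the spectrum by at least $\gamma$, and $\sigma_{ess}(\mathcal F(\Phi^k)) = [0,\infty)$. Norm-resolvent continuity along such a contour passes the gap to $\mathcal F(\Xi^\infty)$ and $\mathcal F(\tilde\Xi^\infty)$, so $\hat\Phi^\infty$ is well defined. For (2), $D_{\Phi^{2k+1}}$ equals the Riesz projector of $\mathcal F(\Phi^{2k})$ onto its $N$ lowest eigenspaces; letting $k \to \infty$ identifies $D_{\tilde\Xi^\infty}$ with the Riesz projector of $\mathcal F(\Xi^\infty)$ onto its $N$ lowest eigenspaces, which equals $D_{\hat\Phi^\infty}$. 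Two orthonormal bases of the same $N$-dimensional subspace differ by a unitary, giving $A_\infty$ with $\tilde\Xi^\infty = A_\infty\hat\Phi^\infty$, and the convergence $\epsilon^{2k+1}_i \to \hat\epsilon^\infty_i$ follows from the convergence of the (finitely many) eigenvalues enclosed by the contour.

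Part (3) is then immediate: $\Xi^\infty = \Theta\tilde\Xi^\infty$ implies $D_{\Xi^\infty} = D_{\tilde\Xi^\infty} = D_{\hat\Phi^\infty}$ (the last equality by (2)), so $\mathcal F(\hat\Phi^\infty) = \mathcal F(\Xi^\infty)$, and the defining relations $\mathcal F(\Xi^\infty)\hat\varphi_i^\infty = \hat\epsilon_i^\infty \hat\varphi_i^\infty$ become the Hartree-Fock equation for $\hat\Phi^\infty$. For (4), I apply the odd-to-even analogue of (2): $D_{\Phi^{2k+2}}$ is the Riesz projector of $\mathcal F(\Phi^{2k+1})$ onto its $N$ lowest eigenspaces, whose limit is the Riesz projector of $\mathcal F(\tilde\Xi^\infty) = \mathcal F(\hat\Phi^\infty)$ onto its $N$ lowest eigenspaces. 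The hypothesis of (4) identifies the latter subspace with $\mathrm{Ran}\,D_{\hat\Phi^\infty}$, so $D_{\Xi^\infty} = D_{\hat\Phi^\infty} = D_{\tilde\Xi^\infty}$, yielding $\Theta$. Part (5) is the shortest: if $D_{\hat\Phi^\infty} = D_{\Xi^\infty}$ and by (2) $D_{\tilde\Xi^\infty} = D_{\hat\Phi^\infty}$, then $D_{\Xi^\infty} = D_{\tilde\Xi^\infty}$ and $\Theta$ exists.

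The most delicate step will be justifying the norm convergence of the Riesz projectors, which requires uniform resolvent bounds for $\mathcal F(\Phi^k)$ along a fixed contour in the gap. This reduces to showing that $\mathcal G(\Phi^k) - \mathcal G(\Xi^\infty)$ (and its odd analogue) is small in the operator norm on $L^2(\mathbb R^3)$ uniformly in $z$ on such a contour, which should follow from the Hilbert-Schmidt convergence of the density operators combined with standard Coulomb-kernel estimates. Once this is in place, the remaining density-operator manipulations behind (1)--(5) are elementary.
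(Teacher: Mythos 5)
Your proposal is correct and follows essentially the same route as the paper: part (1) via norm-continuity of the spectrum (the paper cites Kato's upper semicontinuity theorems, which rest on the same resolvent convergence you invoke), part (2) via convergence of the Riesz contour projectors $P_{\Phi^{2k}} \to P_{\Xi^\infty}$ to identify $D_{\tilde\Xi^\infty}$ with the spectral projector of $\mathcal F(\Xi^\infty)$, and parts (3)--(5) by the same elementary density-operator and orthonormal-basis manipulations. The only cosmetic difference is that for the eigenvalue convergence $\hat\epsilon_i^\infty = \lim_k \epsilon_i^{2k+1}$ the paper passes through the convergent Hermitian matrices $(\epsilon_{ij}^{2k+1})$ and matrix perturbation theory, whereas you appeal directly to the convergence of the finitely many enclosed eigenvalues under norm-resolvent convergence; both are standard and yield the same conclusion.
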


\begin{rem}
\begin{itemize}
\item[(a)] The condition $D_{\Phi}=D_{\tilde\Phi}$ is equivalent to that there exists a unitary matrix $\hat A$ such that $\Phi=\hat A\tilde\Phi$ (cf. proof of Theorem \ref{maincor} (5)). In particular, if $D_{\Xi^{\infty}}=D_{\tilde\Xi^{\infty}}$, then by Theorem \ref{maincor} (3) we can see that $\hat\Phi^{\infty}$ is a solution to the Hartree-Fock equation.
\item[(b)] There exists a case in which the SCF sequence actually fails to converge and it oscillates between two states. In \cite[Example 9]{CB} such a case is given within the Restricted Hartree-Fock (RHF) method in which the functions are spin-dependent and we impose the restriction on the tuple of functions that it consists of the same spatial functions with spin up and spin down.
\end{itemize}
\end{rem}

In the proof of Theorem \ref{main} the uniform well-posedness is used in order to obtain a bound of the difference between $D_{\Phi^k}$ and $D_{\Phi^{k+2}}$ by the difference between ${\mathcal E}(\Phi^k,\Phi^{k+1})$ and ${\mathcal E}(\Phi^{k+1},\Phi^{k+2})$ (cf. Lemma \ref{wpbound}). It also yields an upper bound of the orbital energies (cf. Lemma \ref{gammabound}) which is needed for a uniform decay estimate of $\varphi_i^k$ (cf. Lemma \ref{expbound}). The uniform decay is in turn used to prove that the SCF sequence approaches a critical set of $\hat{\mathcal E}(\Phi,\tilde \Phi)$.

In \cite{Le} due to the discretization, the known result of the \L ojasiewicz inequality for finite-dimensional cases was applicable. However, in the present result detailed infinite-dimensional analysis of functionals is needed for the proof of the \L ojasiewicz inequality. For example, we need to prove that the sequence $\{(\Phi^k,\Phi^{k+1})\}$ approaches to a critical set of ${\mathcal E}(\Phi,\tilde\Phi)$, that the critical set is a compact set, and that the Fr\'echet second derivatives of another auxiliary functional are Fredholm operators at points corresponding to the critical points of ${\mathcal E}(\Phi,\tilde\Phi)$. For such analysis the viewpoint of the function $\Phi$ is more suitable than that of density operators, particularly because the Fr\'echet second derivative of the functional of density operators is a mapping from an operator to another and difficult to analyze. Therefore, we have to relate the analysis with respect to the function $\Phi$ to that with respect to density operators. Since for any density operator there exists a corresponding class of the function $\Phi$ in which any two functions are related by a unitary matrix, we need to choose appropriate elements from the classes to obtain a relation between estimates of density operators and those of the functions. This is achieved by Lemma \ref{Ubound}.

The \L ojasiewicz inequality was proved by \L ojasiewicz \cite{Lo} for analytic functions in finite-dimensional cases. In \cite[Proposition 1.1]{HJK} the \L ojasiewicz inequality was proved for  a functional whose Fr\'echet second derivative is an isomorphism. However, the functional in the present result does not satisfy that condition. Instead its Fr\'echet second derivative at a critical point is decomposed into a sum of an isomorphism and a compact operator (Actually, the first differentiation is performed using a bilinear form as in \cite{FNSS, As, As2} to reduce the complexity due to the complex conjugate). This form of condition was first introduced in Fu\v c\'ik-Ne\v cas-Sou\v cek-Sou\v cek \cite{FNSS} for some functionals to prove that the corresponding critical values are isolated points in the set of all critical values. This condition for an auxiliary functional related to the Hartree-Fock functional was proved by Ashida \cite{As} and used to show that the number of critical values less than a constant smaller than the first energy threshold is finite. It was also a main ingredient of the proof of the fact that the set of all critical points of the Hartree-Fock functional corresponding to a critical value less than the threshold is a union of a finite number of compact connected real-analytic spaces by Ashida \cite{As2}. The way to use the \L ojasiewicz inequality in this paper is following that in \cite{Le} which was introduced by Salomon \cite{Sa} for the study of convergence of a scheme for time-discretized quantum control.

Finally let us mention the existence of solutions to the Hartree-Fock equation and the distribution of the critical values. Existence of a solution to the Hartree-Fock equation that minimizes the Hartree-Fock functional was proved by Lieb and Simon \cite{LS} under the assumption $N<\sum_{l=1}^nZ_l+1$. It was shown by Lions \cite{Li} that if $N\leq\sum_{l=1}^nZ_l$, there exists a sequence of solutions to the Hartree-Fock equation such that the corresponding critical values converge to $0$. Lewin \cite{Lew} proved that under the same assumption there exists a sequence of solutions to the Hartree-Fock equation associated with critical values converging to the first energy threshold $J(N-1)$ which is the infimum of the Hartree-Fock functional of $N-1$ electrons. For any $N$, Ashida \cite{As} showed that the set of all critical values of the Hartree-Fock functional less than $J(N-1)-\epsilon$ is finite for any $\epsilon>0$.

This paper is organized as follows. In Section \ref{thirdsec} we prove that the SCF sequences approach subsets of all critical points of $\mathcal E(\Phi,\tilde\Phi)$. The compactness of the critical sets is shown in Section \ref{fourthsec}. In Section \ref{fifthsec} an auxiliary functional is introduced and we prove that the Fr\'echet second derivative of the functional is decomposed into a sum of an isomorphism and a compact operator, if the orbital energies are tuples of negative numbers. In Section \ref{sixthsec} we show the \L ojasiewicz inequality for functionals near points at which such a decomposition is given. Finally the main theorems are proved in Section \ref{seventhsec}.

\section{approach of SCF sequences to critical sets}\label{thirdsec}
Let $\{\Phi^k\}$ be a uniformly well posed SCF sequence. Since $\Phi^{k+1}$ minimizes the functional $\Phi\mapsto{\mathcal E}(\Phi^k,\Phi)$ and ${\mathcal E}(\Phi,\tilde\Phi)$ is symmetric, we have
$$\mathcal E(\Phi^{k},\Phi^{k+1})\leq \mathcal E(\Phi^k,\Phi^{k-1})=\mathcal E(\Phi^{k-1},\Phi^k),$$
so that ${\mathcal E}(\Phi^k,\Phi^{k+1})$ is decreasing (cf. \cite{CB}). Here we note that $\mathcal G(\Phi)=R^{\Phi}-S^{\Phi}$ is a positive operator for any $\Phi={}^t(\varphi_1,\dots,\varphi_N)\in \bigoplus_{i=1}^NH^2(\mathbb R^3)$, which follows from
$$\langle w, (Q_{ii}^{\Phi}-S^{\Phi}_{ii})w\rangle=\int|x-y|^{-1}|\hat\Psi_i|^2dxdy\geq0,$$
where $\hat\Psi_i:=2^{-1/2}(w(x)\varphi_i(y)-\varphi_i(x)w(y))$. Hence, we have ${\mathcal E}(\Phi^k,\Phi^{k+1})\geq2\inf\sigma(h)$. Therefore, the limit $\mu:=\lim_{k\to\infty}{\mathcal E}(\Phi^k,\Phi^{k+1})\geq2\inf\sigma(h)$ exists.
Let $\Gamma_{\gamma,\mu}$ be the set of all solutions $(\Phi,\tilde \Phi)\in\mathcal W\times\mathcal W$ of
\begin{equation*}
\begin{split}
\mathcal F(\tilde \Phi)\varphi_i&=\epsilon_i\varphi_i\\
\mathcal F(\Phi)\tilde\varphi_i&=\tilde\epsilon_i\tilde\varphi_i
\end{split}
\qquad 1\leq i\leq N,
\end{equation*}
fulfilling ${\mathcal E}(\Phi,\tilde\Phi)=\mu$ and associated with orbital energies $\mathbf e=(\epsilon_1,\dots,\epsilon_N)\in \mathbb R^N$ and $\tilde{\mathbf e}=(\tilde\epsilon_1,\dots,\tilde\epsilon_N)\in \mathbb R^N$ satisfying $\epsilon_i,\tilde\epsilon_i\leq -\gamma$, $1\leq i\leq N$, where $\gamma$ is the gap in the uniform well-posedness, and $\Phi={}^t(\varphi_1,\dots,\varphi_N)$ and $\tilde \Phi={}^t(\tilde \varphi_1,\dots,\tilde \varphi_N)$. The set $\Gamma_{\gamma,\mu}$ is a subset of the set of all critical points of $\mathcal E(\Phi,\tilde\Phi):\mathcal W\times\mathcal W\to\mathbb R$. Let us call such a set critical set. Let
\begin{align*}
&d((\Phi^k,\Phi^{k+1}),\Gamma_{\gamma,\mu})\\
&\quad:=\inf_{(\Phi,\tilde \Phi)\in \Gamma_{\gamma,\mu}}\left(\lVert\Phi^k-\Phi\rVert_{\bigoplus_{i=1}^NH^2(\mathbb R^3)}+\lVert\Phi^{k+1}-\tilde\Phi\rVert_{\bigoplus_{i=1}^NH^2(\mathbb R^3)}\right),
\end{align*}
be the distance between $(\Phi^k,\Phi^{k+1})$ and $\Gamma_{\gamma,\mu}$ in $(\bigoplus_{i=1}^NH^2(\mathbb R^3))\bigoplus(\bigoplus_{i=1}^NH^2(\mathbb R^3))$. In this section our goal is to prove the following lemma. 

\begin{lem}\label{app}
Let $\{\Phi^k\}$ be a uniformly well posed SCF sequence such that the initial function $\Phi^0={}^t(\varphi_1^0,\dots,\varphi_n^0)$ satisfies
$$\lVert \langle x\rangle\varphi_i^0(x)\rVert\leq C_0,\ 1\leq i\leq N,$$
for some $C_0>0$.
Then we have $\lim_{k\to\infty}d((\Phi^k,\Phi^{k+1}),\Gamma_{\gamma,\mu})=0$, where $\mu:=\lim_{k\to\infty}{\mathcal E}(\Phi^k,\Phi^{k+1})$ and $\gamma$ is the gap in the uniform well-posedness.
\end{lem}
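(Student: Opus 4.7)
The plan is to argue by contradiction: suppose the conclusion fails, so there exist $\delta>0$ and a subsequence $\{k_j\}$ with $d((\Phi^{k_j},\Phi^{k_j+1}),\Gamma_{\gamma,\mu})\ge\delta$. I will extract a further sub-subsequence converging in $(\bigoplus_{i=1}^NH^2(\mathbb R^3))\oplus(\bigoplus_{i=1}^NH^2(\mathbb R^3))$ to an element of $\Gamma_{\gamma,\mu}$, producing the desired contradiction.

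First I assemble the uniform estimates furnished by the later lemmas. By Lemma \ref{gammabound} the orbital energies satisfy $\epsilon_i^k\le-\gamma$ for all $k,i$; by Lemma \ref{expbound} the initial bound $\lVert\langle x\rangle\varphi_i^0\rVert\le C_0$ propagates to $\lVert\langle x\rangle\varphi_i^k\rVert\le C$ uniformly in $k$. Combined with the eigenvalue relation $\mathcal F(\Phi^{k-1})\varphi_i^k=\epsilon_i^k\varphi_i^k$ and standard $L^2$-bounds on $V\varphi$ and $\mathcal G(\Phi)\varphi$, this gives a uniform $H^2$-bound on each $\varphi_i^k$; positivity of $\mathcal G(\Phi)$ further yields $\epsilon_i^k\ge\inf\sigma(h)$, so the orbital energies are bounded from below as well. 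Using the $H^2$-bound and the decay (so that a Rellich-type argument gives $L^2$-precompactness), I extract a sub-subsequence (still indexed by $k_j$) along which $\Phi^{k_j}\to\Phi^\infty$ and $\Phi^{k_j+1}\to\tilde\Phi^\infty$ weakly in $\bigoplus H^2$ and strongly in $\bigoplus L^2$, while $\epsilon_i^{k_j}\to\epsilon_i^\infty$ and $\epsilon_i^{k_j+1}\to\tilde\epsilon_i^\infty$ with $\epsilon_i^\infty,\tilde\epsilon_i^\infty\le-\gamma$. Strong $L^2$-convergence preserves orthonormality, so $\Phi^\infty,\tilde\Phi^\infty\in\mathcal W$, and continuity of $\mathcal E$ under this convergence gives $\mathcal E(\Phi^\infty,\tilde\Phi^\infty)=\mu$.

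The crux is to verify both coupled eigenvalue equations defining $\Gamma_{\gamma,\mu}$. Passing to the limit in $\mathcal F(\Phi^{k_j})\varphi_i^{k_j+1}=\epsilon_i^{k_j+1}\varphi_i^{k_j+1}$ yields $\mathcal F(\Phi^\infty)\tilde\varphi_i^\infty=\tilde\epsilon_i^\infty\tilde\varphi_i^\infty$, where continuity of $\mathcal G$ is read off from $\mathcal G(\Phi)=G(D_\Phi)$ together with $\lVert D_{\Phi^{k_j}}-D_{\Phi^\infty}\rVert_2\to0$ (a consequence of strong $L^2$-convergence of $\Phi^{k_j}$). For the companion equation I start from $\mathcal F(\Phi^{k_j-1})\varphi_i^{k_j}=\epsilon_i^{k_j}\varphi_i^{k_j}$. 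Since $\{\mathcal E(\Phi^k,\Phi^{k+1})\}$ is monotone decreasing and bounded below, its successive differences tend to $0$, and Lemma \ref{wpbound} then yields $\lVert D_{\Phi^{k+2}}-D_{\Phi^k}\rVert_2\to0$; shifting indices, $D_{\Phi^{k_j-1}}\to D_{\tilde\Phi^\infty}$ in $\mathcal T_2$, so $\mathcal G(\Phi^{k_j-1})=G(D_{\Phi^{k_j-1}})\to G(D_{\tilde\Phi^\infty})=\mathcal G(\tilde\Phi^\infty)$, and the limit equation $\mathcal F(\tilde\Phi^\infty)\varphi_i^\infty=\epsilon_i^\infty\varphi_i^\infty$ follows. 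Thus $(\Phi^\infty,\tilde\Phi^\infty)\in\Gamma_{\gamma,\mu}$. Finally, rewriting the eigenvalue equation as $-\Delta\varphi_i^{k_j+1}=(\epsilon_i^{k_j+1}-V-\mathcal G(\Phi^{k_j}))\varphi_i^{k_j+1}$, the right-hand side converges strongly in $L^2$, which together with strong $L^2$-convergence of $\varphi_i^{k_j+1}$ upgrades the convergence to strong $H^2$; the same argument applies to $\varphi_i^{k_j}$, contradicting $d((\Phi^{k_j},\Phi^{k_j+1}),\Gamma_{\gamma,\mu})\ge\delta$.

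The main obstacle I anticipate is the companion equation: the SCF iteration is asymmetric, producing only the one-sided relation $\mathcal F(\Phi^k)\varphi_i^{k+1}=\epsilon_i^{k+1}\varphi_i^{k+1}$, whereas the set $\Gamma_{\gamma,\mu}$ demands a symmetric pair system. Bridging this gap requires using the monotonicity of $\mathcal E(\Phi^k,\Phi^{k+1})$ to extract the density-operator Cauchy property $\lVert D_{\Phi^{k+2}}-D_{\Phi^k}\rVert_2\to0$ of Lemma \ref{wpbound}, and then leveraging $\mathcal G(\Phi)=G(D_\Phi)$ to replace $D_{\Phi^{k_j-1}}$ by $D_{\tilde\Phi^\infty}$ in the limit; this step is the reason the analysis must be conducted on pairs rather than on single functions.
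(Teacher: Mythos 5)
Your proposal follows the same overall strategy as the paper: uniform decay (Lemma \ref{expbound}) plus the uniform $H^1$ bound (Lemma \ref{Hbound}) give $L^2$-precompactness along any subsequence; the eigenvalue equations upgrade this to $H^2$; $\mathcal F(\Phi^{k_j})\varphi_i^{k_j+1}=\epsilon_i^{k_j+1}\varphi_i^{k_j+1}$ passes to the limit directly; and for the companion equation one uses Lemma \ref{wpbound} to show $\lVert D_{\Phi^{k_j+1}}-D_{\Phi^{k_j-1}}\rVert_2\to 0$, which identifies the limit Fock operator acting on $\Phi^{k_j}$ as $\mathcal F(\tilde\Phi^\infty)$. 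The contradiction framing at the start and the $H^2$-upgrade via $-\Delta\varphi=(\epsilon-V-\mathcal G)\varphi$ (using $\Delta$-boundedness of $V$ and $\mathcal G$ together with a uniform $H^2$-bound and $L^2$-convergence) are both sound and equivalent to what the paper does.

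There is, however, a genuine gap at precisely the step you flag as the crux. You write that $D_{\Phi^{k_j-1}}\to D_{\tilde\Phi^\infty}$ in $\mathcal T_2$ implies $G(D_{\Phi^{k_j-1}})\to G(D_{\tilde\Phi^\infty})$, justified only by linearity of $G$ in $D$ and the identity $\mathcal G(\Phi)=G(D_\Phi)$. This is correct for the exchange part: by Hardy's inequality and Cauchy--Schwarz one gets $\lVert \hat D(|x-y|^{-1}w)\rVert_{L^2}\le 2\lVert \hat D\rVert_2\lVert\nabla w\rVert$ for any Hilbert--Schmidt $\hat D$. But the direct (Coulomb) part, $\mathrm{Tr}(|x-y|^{-1}\hat D)\,w(x)$, involves the diagonal $\hat D(y,y)$ of the kernel, which is not a continuous function of $\hat D$ in the $\mathcal T_2$ topology — the Hilbert--Schmidt norm does not control $\lVert\hat\rho\rVert_{L^1}$ or $\lVert\hat\rho\rVert_{L^2}$. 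Even the case $N=1$ illustrates the issue: $\lVert D_{\varphi_n}-D_{\varphi}\rVert_2\to 0$ forces only $|\langle\varphi_n,\varphi\rangle|\to 1$, so one must first extract phases before $|\varphi_n|^2\to|\varphi|^2$ follows. The paper resolves this with Lemma \ref{Ubound}: given $D_\Phi, D_{\tilde\Phi}$, the singular value decomposition of the overlap matrix produces unitaries $A,\tilde A$ with $\lVert A\Phi-\tilde A\tilde\Phi\rVert_{\bigoplus L^2}\le\lVert D_\Phi-D_{\tilde\Phi}\rVert_2$, and since $\mathcal G(A\Phi)=\mathcal G(\Phi)$, the Hardy inequality combined with the uniform $H^1$-bounds then delivers $\lVert\mathcal G(\Phi^{k_j+1})\varphi_i^{k_j}-\mathcal G(\Phi^{k_j-1})\varphi_i^{k_j}\rVert\to 0$. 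Without invoking Lemma \ref{Ubound} (or re-proving it), the continuity claim for the direct term is unjustified, so you should insert this step to close the argument.
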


For the proof of Lemma \ref{app} we prepare several lemmas.  First under the uniform well-posedness we have the following estimate (cf. \cite{CB}).

\begin{lem}\label{wpbound}
Assume that $\{\Phi^k\}$ is a uniformly well posed SCF sequence with the gap $\gamma>0$. Then we have
$${\mathcal E}(\Phi^k,\Phi^{k+1})-{\mathcal E}(\Phi^{k+1},\Phi^{k+2})\geq2^{-1}\gamma\lVert D_{\Phi^{k+2}}-D_{\Phi^k}\rVert_{2}^2,$$
for any $k\geq 0$.
\end{lem}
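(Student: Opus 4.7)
The plan is to rewrite the left-hand side as a trace involving the Fock operator, and then exploit the fact that $D_{\Phi^{k+2}}$ is, by construction of the SCF step, the spectral projection of $\mathcal{F}(\Phi^{k+1})$ onto its $N$ lowest eigenvalues.

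First I would use the density-operator form $\mathcal{E}(\Phi,\tilde\Phi)=E(D_\Phi,D_{\tilde\Phi})$ together with the symmetry of $\mathcal{E}$ (so $\mathcal{E}(\Phi^k,\Phi^{k+1})=\mathcal{E}(\Phi^{k+1},\Phi^k)$) and the cancellation of the common term $\mathrm{Tr}(hD_{\Phi^{k+1}})$ to get
\begin{equation*}
\mathcal{E}(\Phi^k,\Phi^{k+1})-\mathcal{E}(\Phi^{k+1},\Phi^{k+2})=\mathrm{Tr}\bigl((h+G(D_{\Phi^{k+1}}))(D_{\Phi^k}-D_{\Phi^{k+2}})\bigr)=\mathrm{Tr}\bigl(\mathcal{F}(\Phi^{k+1})(D_{\Phi^k}-D_{\Phi^{k+2}})\bigr).
\end{equation*}
All traces are well defined because each $D_{\Phi^j}$ is a rank-$N$ projection with range in $H^2(\mathbb{R}^3)$.

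Next, set $A:=\mathcal{F}(\Phi^{k+1})$, $P:=D_{\Phi^{k+2}}$, $Q:=D_{\Phi^k}$. By the SCF construction and the uniform well-posedness, $P$ is exactly the spectral projection of $A$ onto its $N$ smallest eigenvalues $\epsilon_1^{k+2}\leq\cdots\leq\epsilon_N^{k+2}$, so $A$ commutes with $P$ and decomposes as $A=PAP+P^\perp A P^\perp$. Using $PQP=P-PQ^\perp P$, cyclicity of the trace, and $P^2=P$, I would reduce the above expression to
\begin{equation*}
\mathrm{Tr}(A(Q-P))=\mathrm{Tr}\bigl(AP^\perp Q P^\perp\bigr)-\mathrm{Tr}\bigl(APQ^\perp P\bigr).
\end{equation*}

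The decisive step is the use of the spectral gap. On $\mathrm{Ran}(P)$ the operator $A$ is bounded above by $\epsilon_N^{k+2}$ and on $\mathrm{Ran}(P^\perp)$ it is bounded below by $\epsilon_N^{k+2}+\gamma$ (because the rest of the spectrum of $A$ lies at distance $\geq\gamma$ from the first $N$ eigenvalues, by (UWP)). Hence, as operator inequalities, $P^\perp A P^\perp\geq(\epsilon_N^{k+2}+\gamma)P^\perp$ and $\epsilon_N^{k+2}P-PAP\geq0$. Combining these with the elementary identity for two rank-$N$ projections,
\begin{equation*}
\mathrm{Tr}(P^\perp Q P^\perp)=\mathrm{Tr}(PQ^\perp P)=N-\mathrm{Tr}(PQ)=\tfrac{1}{2}\lVert Q-P\rVert_2^2,
\end{equation*}
yields
\begin{equation*}
\mathrm{Tr}(A(Q-P))\geq(\epsilon_N^{k+2}+\gamma)\cdot\tfrac{1}{2}\lVert Q-P\rVert_2^2-\epsilon_N^{k+2}\cdot\tfrac{1}{2}\lVert Q-P\rVert_2^2=\tfrac{\gamma}{2}\lVert Q-P\rVert_2^2,
\end{equation*}
which is the stated inequality.

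The only subtle point, rather than a real obstacle, is justifying the operator inequalities on $\mathrm{Ran}(P^\perp)$ where $A$ is unbounded and meets its essential spectrum $[0,\infty)$; this is handled by the functional calculus, since (UWP) guarantees that the whole spectral subset $\sigma(A)\setminus\{\epsilon_1^{k+2},\dots,\epsilon_N^{k+2}\}$ lies in $[\epsilon_N^{k+2}+\gamma,\infty)$. Everything else is a clean manipulation of projections and traces.
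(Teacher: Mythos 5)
Your proof is correct and is essentially the paper's argument rewritten in density-operator language: the spectral projection $E_{k+1}(\epsilon_N^{k+2}+\gamma/2)$ used in the paper is exactly $P=D_{\Phi^{k+2}}$, the starting identity $\sum_i\langle\varphi_i^k,\mathcal F(\Phi^{k+1})\varphi_i^k\rangle-\sum_i\langle\varphi_i^{k+2},\mathcal F(\Phi^{k+1})\varphi_i^{k+2}\rangle$ is your $\mathrm{Tr}\bigl(\mathcal F(\Phi^{k+1})(Q-P)\bigr)$, and both proofs finish with the gap bound above/below $\epsilon_N^{k+2}$ together with the identity $\lVert Q-P\rVert_2^2=2(N-\mathrm{Tr}(PQ))$. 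The only difference is cosmetic: the paper works orbital-by-orbital with the resolution of identity, while you package the same estimates as the operator inequalities $PAP\le\epsilon_N^{k+2}P$ and $P^\perp AP^\perp\ge(\epsilon_N^{k+2}+\gamma)P^\perp$ and a trace computation with $PQ^\perp P$ and $P^\perp QP^\perp$.
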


\begin{proof}
First by the uniform well-posedness we have
\begin{equation}\label{myeq3.0}
\begin{split}
&{\mathcal E}(\Phi^k,\Phi^{k+1})-{\mathcal E}(\Phi^{k+1},\Phi^{k+2})\\
&\quad=\sum_{i=1}^N(\langle\varphi_i^{k},\mathcal F(\Phi^{k+1})\varphi_i^k\rangle-\langle\varphi_i^{k+2},\mathcal F(\Phi^{k+1})\varphi_i^{k+2}\rangle)\\
&\quad\geq\sum_{i=1}^N\big\{(\epsilon_N^{k+2}+\gamma)\lVert (1-E_{k+1}(\epsilon_N^{k+2}+\gamma/2))\varphi_i^k\rVert^2\\
&\qquad+\langle \varphi_i^k, \mathcal F(\Phi^{k+1})E_{k+1}(\epsilon_N^{k+2}+\gamma/2)\varphi_i^k\rangle-\epsilon_i^{k+2}\big\},
\end{split}
\end{equation}
where  $E_{k+1}(\lambda)$ is the resolution of identity of $\mathcal F(\Phi^{k+1})$, and we used that $\varphi_i^{k+2}$ is an eigenfunction of $\mathcal F(\Phi^{k+1})$ associated with $i$th eigenvalue $\epsilon_i^{k+2}$ in ascending order. Noting that the orthogonal projection of $w\in L^2(\mathbb R^3)$ onto the eigenspace corresponding to the $j$th eigenvalue of $\mathcal F(\Phi^{k+1})$ is given by $\langle \varphi_j^{k+2},w\rangle\varphi_j^{k+2}$ and $\lVert\varphi_i^k\rVert=1$, we can calculate as
$$\lVert(1-E_{k+1}(\epsilon_N^{k+2}+\gamma/2))\varphi_i^k\rVert^2=1-\sum_{j=1}^N|\langle \varphi_j^{k+2},\varphi_i^k\rangle|^2,$$
and
$$\langle \varphi_i^k, \mathcal F(\Phi^{k+1})E_{k+1}(\epsilon_N^{k+2}+\gamma/2)\varphi_i^k\rangle=\sum_{j=1}^N\epsilon_j^{k+2}|\langle\varphi_j^{k+2},\varphi_i^k\rangle|^2.$$
Thus the right-hand side of \eqref{myeq3.0} is bounded from below by
\begin{align*}
&(\epsilon_N^{k+2}+\gamma)\sum_{i=1}^N\{(1-\sum_{j=1}^N|\langle\varphi_j^{k+2},\varphi_i^k\rangle|^2)\}-\sum_{i=1}^N\{\epsilon_i^{k+2}(1-\sum_{j=1}^N|\langle\varphi_i^{k+2},\varphi_j^k\rangle|^2)\}\\
&\quad\geq\gamma(N-\sum_{i,j=1}^N|\langle\varphi_j^{k+2},\varphi_i^k\rangle|^2),
\end{align*}
where we used $\epsilon_N^{k+2}\geq\epsilon_i^{k+2}$, $1\leq i\leq N$. Therefore, we have
$${\mathcal E}(\Phi^k,\Phi^{k+1})-{\mathcal E}(\Phi^{k+1},\Phi^{k+2})\geq\gamma(N-\sum_{i,j=1}^N|\langle\varphi_j^{k+2},\varphi_i^k\rangle|^2).$$
Hence by a direct calculation we obtain
\begin{align*}
&\lVert D_{\Phi^{k+2}}-D_{\Phi^k}\rVert_{2}^2\\
&\quad=\mathrm{Tr}\, (D_{\Phi^{k+2}}^*D_{\Phi^{k+2}})-\mathrm{Tr}\, (D_{\Phi^{k+2}}^*D_{\Phi^{k}})-\mathrm{Tr}\, (D_{\Phi^{k}}^*D_{\Phi^{k+2}})+\mathrm{Tr}\, (D_{\Phi^{k}}^*D_{\Phi^{k}})\\
&\quad=2N-2\sum_{i,j=1}^N|\langle\varphi_j^{k+2},\varphi_i^k\rangle|^2\\
&\quad \leq2\gamma^{-1}({\mathcal E}(\Phi^k,\Phi^{k+1})-{\mathcal E}(\Phi^{k+1},\Phi^{k+2})),
\end{align*}
which completes the proof.
\end{proof}

A bound for the function $\Phi$ is obtained when Lemma \ref{wpbound} is combined with the following lemma. Let $\bigoplus_{i=1}^NL^2(\mathbb R^3)$ be the Hilbert space equipped with the inner product $\sum_{i=1}^N\langle \varphi_i,\tilde\varphi_i\rangle$ for $\Phi={}^t(\varphi_1,\dots,\varphi_N)$ and $\tilde\Phi={}^t(\tilde\varphi_1,\dots,\tilde\varphi_N)$.
\begin{lem}\label{Ubound}
For any $\Phi={}^t(\varphi_1,\dots,\varphi_N)\in\mathcal W$ and $\tilde\Phi={}^t(\tilde\varphi_1,\dots,\tilde\varphi_N)\in\mathcal W$ there exist $N\times N$ unitary matrices $A$ and $\tilde A$ such that
$$\lVert D_{\Phi}-D_{\tilde\Phi}\rVert_{2}\geq\lVert A\Phi-\tilde A\tilde \Phi\rVert_{\bigoplus_{i=1}^NL^2(\mathbb R^3)}.$$
\end{lem}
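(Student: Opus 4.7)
The plan is to reduce both sides of the inequality to linear-algebra expressions in the $N\times N$ overlap matrix
$$G := (G_{ij})_{i,j=1}^N, \qquad G_{ij} := \langle \varphi_i, \tilde\varphi_j\rangle,$$
and then to finish via the singular value decomposition, using the fact that every singular value of $G$ lies in $[0,1]$.

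First, I would rewrite the left-hand side. Expanding $\mathrm{Tr}((D_\Phi - D_{\tilde\Phi})^2)$ and using $D_\Phi^2=D_\Phi$, $D_{\tilde\Phi}^2=D_{\tilde\Phi}$ and $\mathrm{Tr}\,D_\Phi=\mathrm{Tr}\,D_{\tilde\Phi}=N$, exactly as in the computation at the end of the proof of Lemma \ref{wpbound}, yields
$$\lVert D_\Phi-D_{\tilde\Phi}\rVert_2^{\,2}=2N-2\sum_{i,j=1}^N|G_{ij}|^2=2N-2\lVert G\rVert_F^{\,2},$$
where $\lVert\cdot\rVert_F$ denotes the Frobenius norm. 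Next, for unitary $A,\tilde A$, a direct expansion of inner products and a routine bookkeeping of indices give
$$\langle A\Phi,\tilde A\tilde\Phi\rangle_{\bigoplus_{i=1}^N L^2(\mathbb R^3)}=\mathrm{Tr}\bigl((A^*\tilde A)\,G^T\bigr),$$
so that
$$\lVert A\Phi-\tilde A\tilde\Phi\rVert_{\bigoplus_{i=1}^N L^2(\mathbb R^3)}^{\,2}=2N-2\,\mathrm{Re}\,\mathrm{Tr}\bigl((A^*\tilde A)G^T\bigr).$$
Since $A^*\tilde A$ ranges over all unitaries as $A,\tilde A$ do, there is no loss of generality in taking $A=I$, and the lemma reduces to the assertion that some unitary $\tilde A$ satisfies
$$\mathrm{Re}\,\mathrm{Tr}(\tilde A\,G^T)\ \geq\ \lVert G\rVert_F^{\,2}.$$

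The key analytic input is that all singular values $\sigma_1,\dots,\sigma_N$ of $G$ lie in $[0,1]$. This follows from a single application of Bessel's inequality to the orthonormal set $\{\varphi_1,\dots,\varphi_N\}$: for any $v\in\mathbb C^N$,
$$\lVert Gv\rVert^2=\sum_{i=1}^N\Bigl|\bigl\langle\varphi_i,\textstyle\sum_{j}v_j\tilde\varphi_j\bigr\rangle\Bigr|^2\leq\Bigl\lVert\textstyle\sum_{j}v_j\tilde\varphi_j\Bigr\rVert^{2}=\lVert v\rVert^2,$$
because the $\tilde\varphi_j$ are orthonormal. With the SVD $G=U\Sigma V^*$, $\Sigma=\mathrm{diag}(\sigma_1,\dots,\sigma_N)$, the natural choice $\tilde A:=\bar U V^T$ is unitary and satisfies $U^T\tilde A\,\overline V=I$, which gives
$$\mathrm{Tr}(\tilde A\,G^T)=\mathrm{Tr}(U^T\tilde A\,\overline V\,\Sigma)=\sum_{i=1}^N\sigma_i.$$
The contraction bound $\sigma_i\in[0,1]$ then implies $\sigma_i\geq\sigma_i^{\,2}$ termwise, so
$$\mathrm{Re}\,\mathrm{Tr}(\tilde A\,G^T)=\sum_{i=1}^N\sigma_i\ \geq\ \sum_{i=1}^N\sigma_i^{\,2}=\lVert G\rVert_F^{\,2},$$
which closes the argument.

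I do not anticipate any real obstacle: once the reformulation in terms of $G$ is in place, the statement is essentially the elementary inequality $\lVert G\rVert_1\geq\lVert G\rVert_F^{\,2}$ valid for contractive matrices, together with the observation that the pair of unitary freedoms $(A,\tilde A)$ realizes the polar-decomposition optimum. The only delicate bookkeeping is the conjugation-transpose arithmetic when expressing $\langle A\Phi,\tilde A\tilde\Phi\rangle$ as a trace, and the verification that the candidate $\tilde A=\bar U V^T$ actually produces a real nonnegative diagonal trace.
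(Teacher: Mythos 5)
Your proof is correct and follows essentially the same route as the paper: both form the overlap matrix $\langle\varphi_i,\tilde\varphi_j\rangle$, take its singular value decomposition, observe via a contraction/Bessel argument that every singular value lies in $[0,1]$, and then conclude with the termwise inequality $\sigma_i\geq\sigma_i^2$. The only cosmetic difference is that you carry out the bookkeeping through trace identities, whereas the paper directly passes to the rotated tuples $\Xi=A\Phi$, $\tilde\Xi=\tilde A\tilde\Phi$ and computes their pairwise inner products, but the decomposition, the key lemma, and the inequality are the same.
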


\begin{proof}
Let $\hat B$ be the matrix whose components are given by
$$\hat B_{ij}=\langle\varphi_i,\tilde\varphi_j\rangle.$$
By the singular value decomposition (see e.g. \cite[Theorem 1.6.3]{Ch}) there exist $N\times N$ unitary matrices $A$ and $\tilde A$ such that $\bar A\hat B({}^t\tilde A)=\mathrm{diag}\, [\lambda_1,\dotsm\lambda_N]$, where $\bar A$ is the complex conjugate of $A$ and $\lambda_1,\dots,\lambda_N$ are nonnegative real numbers that are singular values of $\hat B$. Besides, since it is easily seen that $\sup_{\mathbf c\in\mathbb C^N, |\mathbf c|=1}|\hat B\mathbf c|\leq 1$, we have $\lambda_1,\dots,\lambda_N\leq 1$. Thus setting $\Xi={}^t(\xi_1,\dots,\xi_N):=A\Phi$ and $\tilde\Xi={}^t(\tilde\xi_1,\dots,\tilde\xi_N):=\tilde A\tilde \Phi$ we obtain $\langle\xi_i,\tilde\xi_j\rangle=\delta_{ij}\lambda_i$. Moreover, we can easily see that $D_{A\Phi}=D_{\Phi}$. Hence we have
\begin{align*}
\lVert D_{\Phi}-D_{\tilde\Phi}\rVert_{2}^2&=\lVert D_{A\Phi}-D_{\tilde A\tilde\Phi}\rVert_{2}^2=2(N-\sum_{i,j=1}^N|\langle\xi_i,\tilde\xi_j\rangle|^2)\\
&=2(N-\sum_{i=1}^N\lambda_i^2)\geq2(N-\sum_{i=1}^N\lambda_i)\\
&=2(N-\sum_{i=1}^N\langle\xi_i,\tilde\xi_i\rangle)=\lVert\Xi-\tilde\Xi\rVert^2_{\bigoplus_{i=1}^NL^2(\mathbb R^3)}\\
&=\lVert A\Phi-\tilde A\tilde \Phi\rVert_{\bigoplus_{i=1}^NL^2(\mathbb R^3)}^2,
\end{align*}
which completes the proof.
\end{proof}

For the proof of the approach of the SCF sequence to $\Gamma_{\gamma,\mu}$ we need a uniform decay estimate for the functions in the sequence. The following bound on the orbital energies is necessary for the decay estimate.

\begin{lem}\label{gammabound}
Let $\{\Phi^k\}$ be a uniformly well posed SCF sequence with the gap $\gamma>0$. Then $\epsilon_i^k\leq-\gamma$, $1\leq i\leq N$ for any $k\geq 1$.
\end{lem}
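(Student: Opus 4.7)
The plan is to first establish that $\sigma_{ess}(\mathcal{F}(\Phi^{k-1})) = [0,\infty)$ for every $k$, and then read off the desired upper bound by combining this with the gap condition in (UWP).

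First I would show that for any $\Phi = {}^t(\varphi_1,\ldots,\varphi_N) \in \mathcal{W}$ one has $\sigma_{ess}(\mathcal{F}(\Phi)) = [0,\infty)$. Writing $\mathcal{F}(\Phi) = -\Delta + V + R^{\Phi} - S^{\Phi}$, the idea is to verify that each of $V$, $R^{\Phi}$, and $S^{\Phi}$ is relatively compact with respect to $-\Delta$, so that Weyl's theorem gives $\sigma_{ess}(\mathcal{F}(\Phi)) = \sigma_{ess}(-\Delta) = [0,\infty)$. For $V = -\sum_l Z_l/|x-R_l|$ this is the classical argument: each Coulomb singularity is $-\Delta$-compact by the Rellich-Kato/Kato-Rellich type reasoning (for instance, via Hardy's inequality plus locally compact embedding $H^2_{\mathrm{loc}} \hookrightarrow L^2_{\mathrm{loc}}$). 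For $R^{\Phi}(x) = \sum_i \int |x-y|^{-1}|\varphi_i(y)|^2\,dy$, since $|\varphi_i|^2 \in L^1 \cap L^3$ (using $\varphi_i \in H^2 \subset L^\infty$), the convolution is a bounded continuous function decaying like $|x|^{-1}$ at infinity, so $R^{\Phi}$ is again Coulomb-type and thus $-\Delta$-compact. For the exchange term $S^{\Phi} = \sum_i S_{ii}^{\Phi}$, I would write each summand as $S_{ii}^{\Phi} w = \varphi_i \cdot (|\cdot|^{-1} * (\varphi_i^* w))$ and use that $\varphi_i^* \cdot$ and multiplication by $\varphi_i$ are bounded from $L^2$ into $L^p$ for suitable $p$, while convolution with $|\cdot|^{-1}$ has a mild smoothing/decay effect; combined with cutoff approximations, this realizes $S^{\Phi}$ as a limit in operator norm of Hilbert-Schmidt operators on the domain of $-\Delta$, hence compact relatively to $-\Delta$.

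Second, once $\sigma_{ess}(\mathcal{F}(\Phi^{k-1})) = [0,\infty)$ is in hand, the condition (UWP) guarantees that $\mathcal{F}(\Phi^{k-1})$ has at least $N$ isolated eigenvalues below $0$, and that the set $\{\epsilon_1^k,\ldots,\epsilon_N^k\}$ consisting of the $N$ smallest eigenvalues is separated from the remainder of the spectrum by a distance at least $\gamma$. Since the essential spectrum $[0,\infty)$ belongs to this remainder, the distance from $\epsilon_N^k$ (the largest of the first $N$ eigenvalues) to $0$ is at least $\gamma$, which gives $\epsilon_N^k \leq -\gamma$. The chain of inequalities $\epsilon_1^k \leq \epsilon_2^k \leq \cdots \leq \epsilon_N^k \leq -\gamma$ then yields the lemma.

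The main obstacle is the verification of $-\Delta$-compactness of the nonlocal exchange term $S^{\Phi}$; the local multiplicative terms $V$ and $R^{\Phi}$ follow from standard Coulomb estimates, but $S^{\Phi}$ has a nonlocal kernel with the singular factor $|x-y|^{-1}$ and is not obviously Hilbert-Schmidt. Once this step is disposed of, the rest of the argument is a one-line consequence of the spectral gap hypothesis.
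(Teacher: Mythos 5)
Your proposal follows essentially the same route as the paper: identify $\sigma_{ess}(\mathcal F(\Phi^{k-1}))=[0,\infty)$ via Weyl's essential spectrum theorem, then read off $\epsilon_N^k\leq -\gamma$ from the gap hypothesis in (UWP). The one place you go astray is in flagging the exchange term as the ``main obstacle'' and claiming $S^{\Phi}$ is ``not obviously Hilbert-Schmidt.'' It is, and by a one-line estimate: the kernel of $S_{ii}^{\Phi}$ is $K_i(x,y)=\varphi_i(x)\,|x-y|^{-1}\,\varphi_i^*(y)$, so
\[
\lVert S_{ii}^{\Phi}\rVert_{2}^2=\int\!\!\int |\varphi_i(x)|^2\,|x-y|^{-2}\,|\varphi_i(y)|^2\,dx\,dy\leq 4\lVert\nabla\varphi_i\rVert^2\lVert\varphi_i\rVert^2<\infty,
\]
where the inner $y$-integral is controlled by Hardy's inequality. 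Thus $S^{\Phi}$ is Hilbert-Schmidt, hence compact, with no need for cutoff approximations; the paper invokes exactly this. A second minor inefficiency: the paper starts Weyl's theorem from $h=-\Delta+V$, for which $\sigma_{ess}(h)=[0,\infty)$ is classical, so only $\mathcal G(\Phi^{k-1})=R^{\Phi^{k-1}}-S^{\Phi^{k-1}}$ needs to be shown $h$-compact. You instead decompose from $-\Delta$, which additionally requires re-verifying $-\Delta$-compactness of the nuclear Coulomb potential $V$; this is standard, but it is work the paper sidesteps. With these two simplifications your argument collapses onto the paper's.
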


\begin{proof}
Since $\epsilon_N^k=\max\{\epsilon_1^k,\dots,\epsilon_N^k\}$, we have only to prove $\epsilon_N^k\leq -\gamma$.
If we prove $\sigma_{ess}(\mathcal F(\Phi^{k-1}))=[0,\infty)$, by the uniform well-posedness we obviously have $\epsilon_N^k\leq \inf\sigma_{ess}(\mathcal F(\Phi^{k-1}))-\gamma=-\gamma$, and the proof is completed. By $\sigma_{ess}(h)=[0,\infty)$ and the Weyl's essential spectrum theorem (see e.g. \cite{RS}) we only need to prove that $\mathcal G(\Phi^{k-1})$ is $h$-compact. Since $R^{\Phi^{k-1}}(x)$ is a bounded function decaying as $|x|\to\infty$, $R^{\Phi^{k-1}}$ is $\Delta$-compact, and thus $h$-compact. Because $S^{\Phi^{k-1}}$ is an integral operator of the Hilbert-Schmidt type, it is a compact operator. Consequently, $\mathcal G(\Phi^{k-1})$ is $h$-compact, which completes the proof.
\end{proof}

Let us define $\langle x\rangle:=\sqrt{1+|x|^2}$. We denote the $L^2(\mathbb R^3)$ norm of $w\in L^2(\mathbb R^3)$ by $\lVert w\rVert$. Recall that since $\varphi_i^{k+1}$ is an eigenfunction of $\mathcal F(\Phi^k)$ associated with the eigenvalue $\epsilon_i^{k+1}$, we have
\begin{equation}\label{myeq3.2}
\mathcal F(\Phi^k)\varphi_i^{k+1}=\epsilon_i^{k+1}\varphi_i^{k+1}.
\end{equation}
The following lemma gives a uniform $H^1$ bound for the sequence.

\begin{lem}\label{Hbound}
For any $\nu>0$ there exists a constant $\tilde C_{\nu}$ such that any solution $\Phi={}^t(\varphi_1,\dots,\varphi_N)\in\mathcal W$ of
\begin{equation}\label{myeq3.0.0}
\mathcal F(\tilde\Phi)\varphi_i=\epsilon_i\varphi_i,\ 1\leq i\leq N,
\end{equation}
for some $\tilde \Phi\in\bigoplus_{i=1}^NH^2(\mathbb R^3)$ and $(\epsilon_1,\dots,\epsilon_N)\in\mathbb R^N$ with $|\epsilon_i|\leq \nu,\ 1\leq i\leq N$
satisfies $\lVert\nabla\varphi_i\rVert<\tilde C_{\nu},\ 1\leq i\leq N$.
\end{lem}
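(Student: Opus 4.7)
The plan is to use the eigenvalue equation \eqref{myeq3.0.0} directly, testing against $\varphi_i$, and then control the Coulomb potential $V$ via a form bound by the Laplacian. I will not need any information about $\tilde\Phi$ beyond the positivity of $\mathcal G(\tilde\Phi) = R^{\tilde\Phi} - S^{\tilde\Phi}$, which the paper has already established just before Lemma \ref{wpbound}.

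First I would take the inner product of $\mathcal F(\tilde\Phi)\varphi_i = \epsilon_i\varphi_i$ with $\varphi_i$ and use $\|\varphi_i\|=1$ to obtain
\begin{equation*}
\|\nabla\varphi_i\|^2 + \langle\varphi_i, V\varphi_i\rangle + \langle\varphi_i, \mathcal G(\tilde\Phi)\varphi_i\rangle = \epsilon_i.
\end{equation*}
The positivity of $\mathcal G(\tilde\Phi)$ (obtained by summing the pointwise nonnegativity of $\langle w,(Q_{ii}^{\tilde\Phi}-S_{ii}^{\tilde\Phi})w\rangle$) lets me drop the interaction term to get
\begin{equation*}
\|\nabla\varphi_i\|^2 \le \epsilon_i - \langle\varphi_i,V\varphi_i\rangle \le \nu + |\langle\varphi_i,V\varphi_i\rangle|.
\end{equation*}

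Next I would invoke the standard Kato/Hardy form bound: for every $a>0$ there exists $b=b(a,Z_1,\dots,Z_n,R_1,\dots,R_n)>0$ such that for all $w\in H^1(\mathbb R^3)$,
\begin{equation*}
|\langle w,Vw\rangle| \le a\|\nabla w\|^2 + b\|w\|^2.
\end{equation*}
This follows term by term from Hardy's inequality applied at each nucleus $R_l$, combined with translation invariance of $\nabla$; the constants $Z_l,R_l$ are fixed data of the problem, so $b$ depends only on $a$. Choosing $a=1/2$ and using $\|\varphi_i\|=1$ yields
\begin{equation*}
\|\nabla\varphi_i\|^2 \le \nu + \tfrac{1}{2}\|\nabla\varphi_i\|^2 + b,
\end{equation*}
and absorbing the gradient term gives $\|\nabla\varphi_i\|^2 \le 2(\nu+b)$, so that $\tilde C_\nu := \sqrt{2(\nu+b)}$ works uniformly in $i$, in $\tilde\Phi$, and in the particular solution $\Phi$.

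There is no serious obstacle here; the only point worth noting is that we are not using the $H^2$ structure of $\tilde\Phi$ at all, only the positivity of $\mathcal G(\tilde\Phi)$, so the bound is genuinely uniform across the entire SCF sequence (where $\nu$ can be taken to be the $\gamma$-dependent bound from Lemma \ref{gammabound}). This $H^1$ bound is then the input for the subsequent uniform exponential decay estimate alluded to as Lemma \ref{expbound}.
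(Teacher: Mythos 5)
Your argument is correct and follows essentially the same route as the paper: take the quadratic form of the eigenvalue equation, drop $\mathcal G(\tilde\Phi)\geq 0$, bound $\langle\varphi_i,V\varphi_i\rangle$ by the Hardy inequality with a small coefficient on $\lVert\nabla\varphi_i\rVert^2$, and absorb. The only cosmetic difference is that the paper first records the inequality $\lVert\nabla w\rVert^2\leq C\langle w,\mathcal F(\tilde\Phi)w\rangle+C'\lVert w\rVert^2$ for a general $w\in H^1(\mathbb R^3)$ before substituting $w=\varphi_i$, whereas you plug in $\varphi_i$ from the start.
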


\begin{rem}\label{Hboundrem}
Assume that $\tilde\Phi\in\mathcal W$ and that $\Phi$ is a solution of \eqref{myeq3.0.0} with the orbital energy $\mathbf e=(\epsilon_1,\dots,\epsilon_N)$ satisfying $\epsilon_i\leq 0,\ 1\leq i\leq N$. Then by $\mathcal G(\tilde\Phi)\geq0$ we have
$$\epsilon_i=\langle\varphi_i,\mathcal F(\tilde\Phi)\varphi_i\rangle\geq\langle\varphi_i, h\varphi_i\rangle\geq\inf\sigma(h),$$
so that Lemma \ref{Hbound} yields $\lVert \nabla\varphi_i\rVert<\tilde C_b$, where $b:=|\inf\sigma(h)|$.
\end{rem}

\begin{proof}
By the Hardy inequality we can estimate the Coulomb potential as
$$\int\frac{1}{|x|}|w(x)|^2dx\leq\left\lVert \frac{1}{|x|}w(x)\right\rVert\lVert w\rVert\leq 2\lVert\nabla w\rVert\lVert w\rVert\leq\delta\lVert\nabla w\rVert^2+\delta^{-1}\lVert w\rVert,$$
for any $w\in H^1(\mathbb R^3)$ and $\delta>0$. Since the center of the Coulomb potential is irrelevant to the Hardy inequality, the potential $V$ in $h$ is estimated as
$$|\langle w, Vw\rangle| \leq\sum_lZ_l(\delta\lVert\nabla w\rVert^2+\delta^{-1}\lVert w\rVert^2).$$
Thus we obtain
\begin{align*}
\lVert\nabla w\rVert^2&=\langle w, (-\Delta+V)w\rangle-\langle w, Vw\rangle\\
&\leq \langle w,h w\rangle+\sum_lZ_l(\delta\lVert\nabla w\rVert^2+\delta^{-1}\lVert w\rVert^2).
\end{align*}
If we choose $\delta$ small enough so that $\delta\sum_lZ_l<1$ will hold, we have
$$\lVert \nabla w\rVert^2\leq C\langle w,hw\rangle+C\delta^{-1}\sum_lZ_l\lVert w\rVert^2,$$
where $C:=(1-\delta\sum_lZ_l)^{-1}$.
Since $\mathcal F(\tilde\Phi)=h+\mathcal G(\tilde\Phi)$ and $\mathcal G(\tilde\Phi)\geq 0$, we can see that
\begin{equation}\label{myeq3.1}
\lVert \nabla w\rVert^2\leq C\langle w,\mathcal F(\tilde\Phi)w\rangle+C\delta^{-1}\sum_lZ_l\lVert w\rVert^2.
\end{equation}
Substituting $w=\varphi_i$ into \eqref{myeq3.1} and using $\mathcal F(\tilde\Phi)\varphi_i=\epsilon_i\varphi_i$,
$\lVert\varphi_i\rVert=1$ and the assumption $|\epsilon_i|<\nu$, we obtain
\begin{equation*}\label{myeq3.1.1}
\lVert \nabla \varphi_i\rVert^2\leq \tilde C_{\nu}^2,\ 1\leq i\leq N,
\end{equation*}
where $\tilde C_{\nu}:=(C\nu+C\delta^{-1}\sum_lZ_l)^{1/2}$. This completes the proof. 
\end{proof}

In the proof of Lemma \ref{app} we uniformly estimate the norm of the functions outside spreading balls which is achieved using the following uniform decay estimate.
\begin{lem}\label{expbound}
Let $\{\Phi^k\}$ be a uniformly well posed SCF sequence. Assume that there exists a constant $C_0>0$ such that
$$\lVert \langle x\rangle\varphi_i^0(x)\rVert\leq C_0,\ 1\leq i\leq N.$$
Then there exists a constant $\mathcal C>0$ such that
\begin{equation}\label{myeq3.3.1}
\lVert \langle x \rangle\varphi_i^k(x)\rVert\leq \mathcal C,\ 1\leq i\leq N,
\end{equation}
for any $k\geq 0$.
\end{lem}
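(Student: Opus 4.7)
The plan is to test the eigenvalue equation \eqref{myeq3.2} against $\langle x\rangle^2\varphi_i^{k+1}$ and extract a quadratic inequality for $\eta_{k+1}:=\max_i\lVert\langle x\rangle\varphi_i^{k+1}\rVert$ whose coefficients are uniform in $k$. The case $k=0$ is given by hypothesis; for $k\geq 0$ the inputs I will draw upon are Lemma~\ref{gammabound} (so that $|\epsilon_i^{k+1}|\geq\gamma$), Remark~\ref{Hboundrem} (which pins $\epsilon_i^{k+1}\geq\inf\sigma(h)=:-b$ and hence $\lVert\nabla\varphi_i^{k+1}\rVert\leq\tilde C_b$ via Lemma~\ref{Hbound}), and the operator positivity $\mathcal G(\Phi^k)\geq 0$ recalled at the start of Section~\ref{thirdsec}.

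For the kinetic piece I would use the standard Agmon-type identity
$$\int\langle x\rangle^2(\varphi_i^{k+1})^*(-\Delta\varphi_i^{k+1})\,dx=\lVert\nabla(\langle x\rangle\varphi_i^{k+1})\rVert^2-\int|\nabla\langle x\rangle|^2|\varphi_i^{k+1}|^2\,dx,$$
which is $\geq -1$ because $|\nabla\langle x\rangle|\leq 1$ and $\lVert\varphi_i^{k+1}\rVert=1$. For the Coulomb piece the elementary inequality $\langle x\rangle^2/|x-R_l|\leq C/|x-R_l|+C\langle x\rangle$, together with Hardy's inequality and the interpolation $\int\langle x\rangle|\varphi_i^{k+1}|^2\,dx\leq\lVert\langle x\rangle\varphi_i^{k+1}\rVert\cdot\lVert\varphi_i^{k+1}\rVert\leq\eta_{k+1}$, will yield $|\int\langle x\rangle^2 V|\varphi_i^{k+1}|^2\,dx|\leq C_1+C_2\eta_{k+1}$ with $C_1,C_2$ depending only on the $Z_l$, $N$ and $\tilde C_b$.

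The crucial step is the nonlocal term. Using self-adjointness of $\mathcal G(\Phi^k)$ I would rewrite
$$\langle\varphi_i^{k+1},\langle x\rangle^2\mathcal G(\Phi^k)\varphi_i^{k+1}\rangle=\langle\langle x\rangle\varphi_i^{k+1},\mathcal G(\Phi^k)\langle x\rangle\varphi_i^{k+1}\rangle-\langle\langle x\rangle\varphi_i^{k+1},[\mathcal G(\Phi^k),\langle x\rangle]\varphi_i^{k+1}\rangle,$$
so that the first term is $\geq 0$ by positivity and all the $k$-dependence is pushed into the commutator. Since $R^{\Phi^k}$ commutes with $\langle x\rangle$, only $S^{\Phi^k}$ contributes, and the pointwise bound $|\langle y\rangle-\langle x\rangle|\leq|x-y|$ annihilates the Coulomb singularity in the kernel of $S^{\Phi^k}$ to give
$$|([\mathcal G(\Phi^k),\langle x\rangle]\varphi_i^{k+1})(x)|\leq\sum_{j=1}^N|\varphi_j^k(x)|\int|\varphi_j^k(y)||\varphi_i^{k+1}(y)|\,dy\leq\sum_{j=1}^N|\varphi_j^k(x)|,$$
so that $\lVert[\mathcal G(\Phi^k),\langle x\rangle]\varphi_i^{k+1}\rVert\leq N$ uniformly and the nonlocal term is bounded below by $-N\eta_{k+1}$.

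Assembling these three contributions with $|\epsilon_i^{k+1}|\geq\gamma$ produces a quadratic inequality $\gamma\eta_{k+1}^2\leq C_1'+C_2'\eta_{k+1}$ that solves to a uniform bound $\eta_{k+1}\leq\mathcal C$ depending only on $\gamma$, $b$, $N$ and the nuclei. The main obstacle is precisely the nonlocal exchange $S^{\Phi^k}$: at first glance it threatens to force an iteration $\eta_{k+1}\leq f(\eta_k)$ through the outer factor $\varphi_j^k$, and the reason for keeping $\langle x\rangle^2$ sandwiched symmetrically rather than moving it past $\mathcal G(\Phi^k)$ is that the cancellation $\langle y\rangle-\langle x\rangle$ then reduces that outer $L^2$-factor to the uniform value $\lVert\varphi_j^k\rVert=1$. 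A minor technicality is that the integrations require $\langle x\rangle\varphi_i^{k+1}\in L^2$ a priori; I would handle this by first running the whole argument with the truncated weight $\chi_M:=\min(\langle x\rangle,M)$, obtaining a bound independent of $M$, and then letting $M\to\infty$ by monotone convergence.
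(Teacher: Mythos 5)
Your proposal is correct, and it reaches the conclusion by a genuinely different route than the paper. Both arguments test the eigenvalue equation against the weight squared and use the $1$-Lipschitz property of the weight to cancel the Coulomb singularity in the exchange kernel, but the decisive step diverges. In the paper, after the cancellation the residual double integral is estimated by a \emph{symmetric} Cauchy--Schwarz that distributes the weight $\rho_m^{1/2}$ evenly between $\varphi_j^k$ and $\varphi_i^{k+1}$, producing a bound of the form $\lVert\rho_m\varphi_j^k\rVert^{1/2}\lVert\rho_m\varphi_i^{k+1}\rVert^{1/2}$ that involves the weighted norm of the \emph{previous} iterate. This forces an induction $C_{k+1}\leq 2^{-1}C_k+\check C$, whose stability relies on the contraction factor $1/2$ coming from absorbing only half of the weighted term into the $\gamma$-gap. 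Your proposal instead applies an \emph{asymmetric} Cauchy--Schwarz: the inner $dy$-integral is discharged as $\int|\varphi_j^k||\varphi_i^{k+1}|\,dy\leq\lVert\varphi_j^k\rVert\lVert\varphi_i^{k+1}\rVert=1$, and the full weight $\langle x\rangle$ is kept on the outer factor $\varphi_i^{k+1}$, giving $\lVert[\mathcal G(\Phi^k),\langle x\rangle]\varphi_i^{k+1}\rVert\leq N$ uniformly in $k$, hence a bound $N\eta_{k+1}$ that involves only the current iterate. Combined with the potential estimate $\langle x\rangle^2|V|\lesssim |V|+\langle x\rangle$, Hardy's inequality via Lemma~\ref{Hbound}/Remark~\ref{Hboundrem}, and $-\epsilon_i^{k+1}\geq\gamma$ from Lemma~\ref{gammabound}, this closes into the quadratic inequality $\gamma\eta_{k+1}^2\leq C_1'+C_2'\eta_{k+1}$ with coefficients independent of $k$, so no induction is needed. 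In fact your argument yields a bound on $\eta_k$ for $k\geq 1$ that does not reference the initial datum at all, whereas the paper's recursion gives $C_k\leq C_0+2\check C$ and so inherits a dependence on $C_0$; the hypothesis on $\Phi^0$ is in your version used only at $k=0$. The truncation step is handled in both places; the paper uses a smooth cutoff $\rho_m=m\eta(\langle x\rangle/m)$ while you propose $\chi_M=\min(\langle x\rangle,M)$, which is only Lipschitz, so you should either smooth it as the paper does or justify the integration by parts and the pointwise potential estimate $\chi_M^2/|x-R_l|\leq C/|x-R_l|+C\chi_M$ (which does hold uniformly in $M$, but only after a short case analysis near and far from $R_l$ that your write-up glosses over). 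With that technicality filled in, the proof is sound and arguably cleaner than the one in the paper.
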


\begin{proof}
Let $k$ be fixed and let us assume that there exists $C_k$ such that
\begin{equation}\label{myeq3.4}
\lVert\langle x\rangle\varphi_i^k(x)\rVert^2\leq C_k,\ 1\leq i\leq N.
\end{equation}
We shall seek $C_{k+1}$ so that \eqref{myeq3.4} will hold with $k$ replaced by $k+1$.
Let $\eta(r)\in C_0^{\infty}(\mathbb R)$ be a function such that $\eta(r)=r$ for $-1<r<1$ and $|\eta'(r)|\leq 1$. For any $m\in \mathbb N$ we set $\rho_m(x):=m\eta(\langle x\rangle/m)$.
By a direct calculation we have
\begin{align*}
\mathrm{Re}\, \langle(-\Delta\varphi_i^{k+1}),\rho_m^2\varphi_i^{k+1}\rangle&=\lVert\nabla(\rho_m\varphi_i^{k+1})\rVert^2-\lVert(\nabla \rho_m)\varphi_i^{k+1}\rVert^2.\\
\end{align*}
Thus by \eqref{myeq3.2} we obtain
\begin{equation}\label{myeq3.4.1}
\begin{split}
0&=\mathrm{Re}\, \left\langle(-\Delta+V(x)+R^{\Phi^k}(x)-\epsilon_i^{k+1})\varphi_i^{k+1}-S^{\Phi^k}\varphi_i^{k+1},\rho_m^2\varphi_i^{k+1}\right\rangle\\
&=\lVert\nabla(\rho_m\varphi_i^{k+1})\rVert^2-\lVert(\nabla \rho_m)\varphi_i^{k+1}\rVert^2\\
&\quad+\langle (V(x)+R^{\Phi^k}(x)-\epsilon_i^{k+1})\varphi_i^{k+1},\rho_m^2\varphi_i^{k+1}\rangle\\
&\quad-\mathrm{Re}\, \langle S^{\Phi^k}\varphi_i^{k+1},\rho_m^2\varphi_i^{k+1}\rangle\\
&\geq-\lVert(\nabla \rho_m)\varphi_i^{k+1}\rVert^2+\langle (V(x)+R^{\Phi^k}(x)-\epsilon_i^{k+1})\varphi_i^{k+1},\rho_m^2\varphi_i^{k+1}\rangle\\
&\quad-\mathrm{Re}\, \langle S^{\Phi^k}\varphi_i^{k+1},\rho_m^2\varphi_i^{k+1}\rangle\\
&\geq-1+\langle (V(x)+R^{\Phi^k}(x)-\epsilon_i^{k+1})\varphi_i^{k+1},\rho_m^2\varphi_i^{k+1}\rangle-\mathrm{Re}\, \langle S^{\Phi^k}\varphi_i^{k+1},\rho_m^2\varphi_i^{k+1}\rangle,
\end{split}
\end{equation}
where we used $\lVert \varphi_i^{k+1}\rVert=1$ and that $|\nabla\rho_m(z)|\leq 1$ for any $z\in\mathbb R^3$.

Here we note that
$$\lvert\rho_m(x)-\rho_m(y)\rvert =\left|\int_0^1(x-y)\cdot\nabla\rho_m(t(x-y)+y)dt\right|\leq|x-y|.$$
Thus we have
\begin{align*}
&|\langle S_{jj}^{\Phi^k}\varphi_i^{k+1},\rho_m^2\varphi_i^{k+1}\rangle-\langle S_{jj}^{\Phi^k}\rho_m\varphi_i^{k+1},\rho_m\varphi_i^{k+1}\rangle|\\
&\quad=\left|\int|x-y|^{-1}\varphi_j^k(y)\overline{\varphi_i^{k+1}}(y)\rho_m(x)(\rho_m(x)-\rho_m(y))\overline{\varphi_j^k}(x)\varphi_i^{k+1}(x)dxdy\right|\\
&\quad\leq\int\left|\varphi_j^k(y)\overline{\varphi_i^{k+1}}(y)\rho_m(x)\overline{\varphi_j^k}(x)\varphi_i^{k+1}(x)\right|dxdy\\
&\quad\leq \lVert|\rho_m|^{1/2}\varphi_j^{k}\rVert\lVert|\rho_m|^{1/2}\varphi_i^{k+1}\rVert,
\end{align*}
where $\overline u$ is the complex conjugate of $u$.
Since the factors in the right-hand side are estimated as
\begin{align*}
\lVert|\rho_m|^{1/2}\varphi_i^{k+1}\rVert&=\left(\int|\rho_m(x)||\varphi_i^{k+1}(x)|^2dx\right)^{1/2}\\
&\leq\lVert\rho_m\varphi_i^{k+1}\rVert^{1/2}\lVert\varphi_i^{k+1}\rVert^{1/2}=\lVert\rho_m\varphi_i^{k+1}\rVert^{1/2},
\end{align*}
we obtain
\begin{align*}
&|\langle S_{jj}^{\Phi^k}\varphi_i^{k+1},\rho_m^2\varphi_i^{k+1}\rangle-\langle S_{jj}^{\Phi^k}\rho_m\varphi_i^{k+1},\rho_m\varphi_i^{k+1}\rangle|\\
&\quad\leq\lVert\rho_m\varphi_j^{k}\rVert^{1/2}\lVert\rho_m\varphi_i^{k+1}\rVert^{1/2}\\
&\quad\leq (2\gamma)^{-1}N+ (2N)^{-1}\gamma\lVert\rho_m\varphi_j^{k}\rVert\lVert\rho_m\varphi_i^{k+1}\rVert\\
&\quad\leq (2\gamma)^{-1}N+(4N)^{-1}\gamma\lVert\rho_m\varphi_j^{k}\rVert^2+ (4N)^{-1}\gamma\lVert\rho_m\varphi_i^{k+1}\rVert^2\\
&\quad\leq (2\gamma)^{-1}N+(4N)^{-1}\gamma C_k+ (4N)^{-1}\gamma\lVert\rho_m\varphi_i^{k+1}\rVert^2,
\end{align*}
where $\gamma$ is the gap in the uniform well-posedness. Therefore, we have
\begin{align*}
&|\langle S^{\Phi^k}\varphi_i^{k+1},\rho_m^2\varphi_i^{k+1}\rangle-\langle S^{\Phi^k}\rho_m\varphi_i^{k+1},\rho_m\varphi_i^{k+1}\rangle|\\
&\quad\leq (2\gamma)^{-1}N^2+4^{-1}\gamma C_k+ 4^{-1}\gamma\lVert\rho_m\varphi_i^{k+1}\rVert^2.
\end{align*}
Thus by \eqref{myeq3.4.1} and $\langle w,(R^{\Phi^k}-S^{\Phi^k})w\rangle\geq0$ with $w=\rho_m\varphi_i^{k+1}$ we obtain
\begin{equation}\label{myeq3.7}
\begin{split}
0&\geq-1-(2\gamma)^{-1}N^2-4^{-1}\gamma C_k-4^{-1}\gamma\lVert\rho_m\varphi_i^{k+1}\rVert^2\\
&\quad+\langle (V(x)-\epsilon_i^{k+1})\varphi_i^{k+1},\rho_m^2\varphi_i^{k+1}\rangle\\
&\geq-1-(2\gamma)^{-1}N^2-4^{-1}\gamma C_k+\langle (V(x)+(3/4)\gamma)\varphi_i^{k+1},\rho_m^2\varphi_i^{k+1}\rangle,
\end{split}
\end{equation}
where we used $\epsilon_i^{k+1}\leq-\gamma$ of Lemma \ref{gammabound} in the second inequality.

Now let $r_0>0$ be a constant such that $|V(x)|<\frac{\gamma}{4}$ for $|x|>r_0$. Then decomposing the integral in \eqref{myeq3.7} into those on $|x|\leq r_0$ and $|x|> r_0$ we have
\begin{align*}
&2^{-1}\gamma\int_{|x|>r_0}\rho_m^2(x)|\varphi_i^{k+1}(x)|^2dx\\
&\quad\leq 1+(2\gamma)^{-1}N^2+4^{-1}\gamma C_k+\int_{|x|\leq r_0}\left|V(x)+(3/4)\gamma\right|\rho_m^2(x)|\varphi_i^{k+1}(x)|^2dx\\
&\quad\leq 1+(2\gamma)^{-1}N^2+4^{-1}\gamma C_k+(1+r_0^2)\left(2\sum_lZ_l\lVert\nabla\varphi_i^{k+1}\rVert+(3/4)\gamma\right)\\
&\quad\leq 1+(2\gamma)^{-1}N^2+4^{-1}\gamma C_k+(1+r_0^2)\left(2\sum_lZ_l\tilde C_b+(3/4)\gamma\right),
\end{align*}
where we used $|\rho_m(x)|\leq \langle x\rangle$ and the Hardy inequality in the second inequality, and $\tilde C_b$ is the constant in Remark \ref{Hboundrem}. Hence Fatou's lemma yields
\begin{align*}
\int_{|x|>r_0}\langle x\rangle^2|\varphi_i^{k+1}(x)|^2dx&=\liminf_{m\to\infty}\int_{|x|>r_0}\rho_m^2(x)|\varphi_i^{k+1}(x)|^2dx\\
&\leq2^{-1}C_k+\hat C,
\end{align*}
where $\hat C:=2\gamma^{-1}\{1+(2\gamma)^{-1}N^2+(1+r_0^2)(2\sum_lZ_l\tilde C_b+(3/4)\gamma)\}$ is independent of $k$. Therefore, noting that
$$\int_{|x|\leq r_0}\langle x\rangle^2|\varphi_i^{k+1}(x)|^2dx\leq 1+r_0^2,$$
we obtain
$$\lVert \langle x\rangle\varphi_i^{k+1}\rVert^2=\int\langle x\rangle^2|\varphi_i^{k+1}(x)|^2dx\leq 2^{-1}C_k+\hat C +1+r_0^2.$$

Thus setting $\check C:= \hat C+1+r_0^2$ we can choose $C_{k+1}=2^{-1}C_k+\check C$ in \eqref{myeq3.4} with $k$ replaced by $k+1$. Then we can easily see that
$$C_k=2^{-k}C_0+\check C\sum_{j=0}^{k-1}2^{-j}\leq C_0+2\check C,$$
for any $k\geq 1$. Therefore, we can choose $\mathcal C:=(C_0+2\check C)^{1/2}$ as the constant in \eqref{myeq3.3.1}, which completes the proof.
\end{proof}

\begin{proof}[Proof of Lemma \ref{app}]
We have only to prove that any subsequence of $\{(\Phi^k,\Phi^{k+1})\}$ contains a subsequence converging to a point in $\Gamma_{\gamma,\mu}$. Lemma \ref{app} follows from this assertion as follows. Suppose $d((\Phi^k,\Phi^{k+1}),\Gamma_{\gamma,\mu})$ does not converge to $0$ against the result of Lemma \ref{app}. Then we can choose a constant $\delta>0$ and a subsequence $\{(\Phi^{k_j},\Phi^{k_j+1})\}$ such that $d((\Phi^{k_j},\Phi^{k_j+1}),\Gamma_{\gamma,\mu})\geq\delta$ for any $j$, which contradicts the assertion above.

\noindent\textbf{Step 1.}
First we shall prove that any subsequence $\{\Phi^{k_j}\}$ of $\{\Phi^k\}$ contains a convergent subsequence in $\bigoplus_{i=1}^NL^2(\mathbb R^3)$. This can be proved in a way similar to the proof of \cite[Lemma 3.3]{As}. By Lemma \ref{Hbound} and the Rellich selection theorem for any $p\in\mathbb N$ there exists a Cauchy subsequence of $\{\Phi^{k_j}\}$ in $\bigoplus_{i=1}^NL^2(B_p)$, still denoted by $\{\Phi^{k_j}\}$, where $B_r:=\{x\in\mathbb R^3:|x|<r\}$. The Cauchy sequence satisfies
$$\lVert \Phi^{k_{j_1}}-\Phi^{k_{j_2}}\rVert_{\bigoplus_{i=1}^NL^2(B_p)}\to 0,$$
as $j_1, j_2\to\infty$.
Thus we can choose further a subsequence, still denoted by $\{\Phi^{k_j}\}$, such that
$$\lVert \Phi^{k_{j_1}}-\Phi^{k_{j_2}}\rVert_{\bigoplus_{i=1}^NL^2(B_{j_0})}<j_0^{-1},$$
where $j_0:=\min\{j_1,j_2\}$. By Lemma \ref{expbound} there exists a constant $\tilde{\mathcal C}$ such that $\lVert \langle x\rangle\Phi^{k}\rVert_{\bigoplus_{i=1}^NL^2(\mathbb R^3)}\leq \tilde{\mathcal C}$ for any $k$, where $\langle x\rangle\Phi^{k}:={}^t(\langle x\rangle\varphi_1^{k},\dots,\langle x\rangle\varphi_N^{k})$. Since $|x|\geq j$ for $x\in\mathbb R^3\setminus B_j$, we have
$$\lVert\Phi^{k_j}\rVert_{\bigoplus_{i=1}^NL^2(\mathbb R^3\setminus B_j)}\leq j^{-1}\lVert\langle x\rangle\Phi^{k_j}\rVert_{\bigoplus_{i=1}^NL^2(\mathbb R^3\setminus B_j)}\leq \tilde{\mathcal C}j^{-1}.$$
Therefore, we obtain
\begin{align*}
&\lVert \Phi^{k_{j_1}}-\Phi^{k_{j_2}}\rVert_{\bigoplus_{i=1}^NL^2(\mathbb R^3)}\\
&\quad\leq\lVert \Phi^{k_{j_1}}-\Phi^{k_{j_2}}\rVert_{\bigoplus_{i=1}^NL^2(B_{j_0})}+\lVert \Phi^{k_{j_1}}-\Phi^{k_{j_2}}\rVert_{\bigoplus_{i=1}^NL^2(\mathbb R^3\setminus B_{j_0})}\\
&\quad\leq j_0^{-1}+2\tilde{\mathcal C}j_0^{-1}.
\end{align*}
Thus $\{\Phi^{k_j}\}$ is a Cauchy sequence in $\bigoplus_{i=1}^NL^2(\mathbb R^3)$.

\noindent\textbf{Step 2.} By the same argument as above we can see that there exists a Cauchy subsequence of $\{\Phi^{k_j-1}\}$ in $\bigoplus_{i=1}^NL^2(\mathbb R^3)$. Hence we can extract a Cauchy subsequence of $\{(\Phi^{k_j-1},\Phi^{k_j})\}$ in $(\bigoplus_{i=1}^NL^2(\mathbb R^3))\oplus(\bigoplus_{i=1}^NL^2(\mathbb R^3))$, still denoted by $\{(\Phi^{k_j-1},\Phi^{k_j})\}$. Since by Lemma \ref{gammabound} and Remark \ref{Hboundrem} we have $\mathbf e^k\in[\inf\sigma(h),-\gamma]^N$, we can further extract a subsequence so that $\mathbf e^{k_j}$ will be a Cauchy sequence. Then using the equation $\mathcal F(\Phi^{k_j-1})\varphi_i^{k_j}=\epsilon_i^{k_j}$, we can see that
\begin{equation}\label{myeq3.8}
\begin{split}
&\lVert h(\varphi_i^{k_{j_1}}-\varphi_i^{k_{j_2}})\rVert\\
&\quad\leq\lVert(\epsilon_i^{k_{j_1}}-R^{\Phi^{k_{j_1}-1}}+S^{\Phi^{k_{j_1}-1}})\varphi_i^{k_{j_1}}-(\epsilon_i^{k_{j_2}}-R^{\Phi^{k_{j_2}-1}}+S^{\Phi^{k_{j_2}-1}})\varphi_i^{k_{j_2}}\rVert.
\end{split}
\end{equation}
Noting that by the Hardy inequality we have estimates as
\begin{align*}
\left|\int|x-y|^{-1}(\varphi_i^{k_{j_1}-1}-\varphi_i^{k_{j_2}-1})^*(y)\varphi_i^{k_{j_1}}(y)dy\right|&\leq2\lVert\varphi_i^{k_{j_1}-1}-\varphi_i^{k_{j_2}-1}\rVert\lVert\nabla\varphi_i^{k_{j_1}}\rVert\\
&\leq2\tilde C_b\lVert\varphi_i^{k_{j_1}-1}-\varphi_i^{k_{j_2}-1}\rVert,
\end{align*}
with the constant $\tilde C_b$ in Remark \ref{Hboundrem}, it follows from \eqref{myeq3.8} that there exists a constant $\hat C_1>0$ such that
\begin{align*}
&\lVert h(\varphi_i^{k_{j_1}}-\varphi_i^{k_{j_2}})\rVert\\
&\quad\leq \hat C_1(\lVert \Phi^{k_{j_1}}-\Phi^{k_{j_2}}\rVert_{\bigoplus_{i=1}^NL^2(\mathbb R^3)}+\lVert \Phi^{k_{j_1}-1}-\Phi^{k_{j_2}-1}\rVert_{\bigoplus_{i=1}^NL^2(\mathbb R^3)}\\
&\qquad+|\mathbf e^{k_{j_1}}-\mathbf e^{k_{j_2}}|).
\end{align*}
Because $V$ is $\Delta$-bounded with a relative bound smaller than $1$, $\Delta$ is $h$-bounded, and therefore, we can conclude that $\{\Phi^{k_{j}}\}$ is a Cauchy sequence in $\bigoplus_{i=1}^NH^2(\mathbb R^3)$.

\noindent\textbf{Step 3.} 
In the same way as above we can see that there exists a convergent subsequence of $\{\Phi^{k_j+1}\}$ in $\bigoplus_{i=1}^NH^2(\mathbb R^3)$. Besides there exists a convergent subsequence of $\{(\mathbf e^{k_j},\mathbf e^{k_j+1})\}$. Hence we can extract a Cauchy subsequence of $\{(\Phi^{k_j},\Phi^{k_j+1})\}$ in $(\bigoplus_{i=1}^NH^2(\mathbb R^3))\bigoplus(\bigoplus_{i=1}^NH^2(\mathbb R^3))$, still denoted by $\{(\Phi^{k_j},\Phi^{k_j+1})\}$, such that $\{(\mathbf e^{k_j},\mathbf e^{k_j+1})\}$ also converges. Set
$$(\Phi^{\infty},\tilde\Phi^{\infty}):=\lim_{j\to\infty}\{(\Phi^{k_j},\Phi^{k_j+1})\},$$
where $\Phi^{\infty}={}^t(\varphi^{\infty}_1,\dots,\varphi_N^{\infty})$, $\tilde\Phi^{\infty}={}^t(\tilde\varphi^{\infty}_1,\dots,\tilde\varphi_N^{\infty})$ and
$$(\mathbf e^{\infty},\tilde{\mathbf e}^{\infty}):=\lim_{j\to\infty}\{(\mathbf e^{k_j},\mathbf e^{k_j+1})\},$$
where $\mathbf e^{\infty}=(\epsilon_1^{\infty},\dots,\epsilon_N^{\infty})$, $\tilde{\mathbf e}^{\infty}=(\tilde\epsilon_1^{\infty},\dots,\tilde\epsilon_N^{\infty})$.
Taking the limits in $L^2(\mathbb R^3)$ of the both sides of
$$\mathcal F(\Phi^{k_j})\varphi_i^{k_j+1}=\epsilon_i^{k_j+1}\varphi_i^{k_j+1},$$
we obtain
\begin{equation}\label{myeq3.8.1}
\mathcal F(\Phi^{\infty})\tilde\varphi_i^{\infty}=\tilde\epsilon_i^{\infty}\tilde\varphi_i^{\infty}.
\end{equation}

In order to consider the convergence of the other equation
\begin{equation}\label{myeq3.9}
\mathcal F(\Phi^{k_j-1})\varphi_i^{k_j}=\epsilon_i^{k_j}\varphi_i^{k_j},
\end{equation}
we shall prove $\lim_{j\to\infty}\lVert\mathcal F(\Phi^{k_j+1})\varphi_i^{k_j}-\mathcal F(\Phi^{k_j-1})\varphi_i^{k_j}\rVert=\lim_{j\to\infty}\lVert\mathcal G(\Phi^{k_j+1})\varphi_i^{k_j}-\mathcal G(\Phi^{k_j-1})\varphi_i^{k_j}\rVert=0$. Recall that ${\mathcal E}(\Phi^k,\Phi^{k+1})$ converges to $\mu$. Hence by Lemmas \ref{wpbound} and \ref{Ubound} for any $\delta>0$ there exists $j_0$ such that for any $j\geq j_0$ with appropriate $N\times N$ unitary matrices $\check A_{k_j-1}^-, \check A_{k_j+1}^+$ we have
\begin{align*}
&\lVert \check A_{k_j+1}^+\Phi^{k_{j}+1}-\check A_{k_j-1}^-\Phi^{k_{j}-1}\rVert_{\bigoplus_{i=1}^NL^2(\mathbb R^3)}^2\\
&\quad\leq \lVert D_{\Phi^{k_{j}+1}}-D_{\Phi^{k_{j}-1}}\rVert_{2}^2\\
&\quad\leq 2\gamma^{-1}({\mathcal E}(\Phi^{k_{j}-1},\Phi^{k_{j}})-{\mathcal E}(\Phi^{k_{j}},\Phi^{k_{j}+1}))\leq \delta.
\end{align*}
Note also that by Remark \ref{Hboundrem} there exists a constant $\hat C_2>0$ independent of $j$ such that
\begin{align*}
\lVert \check A_{k_j+1}^+\Phi^{k_{j}+1}\rVert_{\bigoplus_{i=1}^NH^1(\mathbb R^3)}&=\lVert \Phi^{k_{j}+1}\rVert_{\bigoplus_{i=1}^NH^1(\mathbb R^3)}\leq \hat C_2,\\
\lVert \check A_{k_j-1}^-\Phi^{k_{j}-1}\rVert_{\bigoplus_{i=1}^NH^1(\mathbb R^3)}&=\lVert \Phi^{k_{j}-1}\rVert_{\bigoplus_{i=1}^NH^1(\mathbb R^3)}\leq \hat C_2.
\end{align*}
Thus we can see that there exists a constant $\hat C_3>0$ such that for $j\geq j_0$
\begin{align*}
&\lVert\mathcal G(\Phi^{k_j+1})\varphi_i^{k_j}-\mathcal G(\Phi^{k_j-1})\varphi_i^{k_j}\rVert\\
&\quad=\lVert\mathcal G(\check A_{k_j+1}^+\Phi^{k_j+1})\varphi_i^{k_j}-\mathcal G(\check A_{k_j-1}^-\Phi^{k_j-1})\varphi_i^{k_j}\rVert\\
&\quad\leq \hat C_3\lVert\check  A_{k_j+1}^+\Phi^{k_{j}+1}-\check A_{k_j-1}^-\Phi^{k_{j}-1}\rVert_{\bigoplus_{i=1}^NL^2(\mathbb R^3)}\\
&\qquad\cdot(\lVert \check A_{k_j+1}^+\Phi^{k_{j}+1}\rVert_{\bigoplus_{i=1}^NH^1(\mathbb R^3)}+\lVert\check  A_{k_j-1}^-\Phi^{k_{j}-1}\rVert_{\bigoplus_{i=1}^NH^1(\mathbb R^3)}+\lVert\varphi_i^{k_j}\rVert_{H^1(\mathbb R^3)})\\
&\quad\leq 3\hat C_2\hat C_3\lVert \check A_{k_j+1}^+\Phi^{k_{j}+1}-\check A_{k_j-1}^-\Phi^{k_{j}-1}\rVert_{\bigoplus_{i=1}^NL^2(\mathbb R^3)}\leq 3\hat C_2\hat C_3\delta^{1/2}.
\end{align*}
Since we can choose arbitrarily small $\delta$, This implies
$$\lim_{j\to\infty}\lVert\mathcal G(\Phi^{k_j+1})\varphi_i^{k_j}-\mathcal G(\Phi^{k_j-1})\varphi_i^{k_j}\rVert=0.$$
Thus we have $\lim_{j\to\infty}\mathcal F(\Phi^{k_j-1})\varphi_i^{k_j}=\lim_{j\to\infty}\mathcal F(\Phi^{k_j+1})\varphi_i^{k_j}=\mathcal F(\tilde\Phi^{\infty})\varphi_i^{\infty}$ in $L^2(\mathbb R^3)$. Hence taking the limits in the both sides of \eqref{myeq3.9} we obtain
\begin{equation}\label{myeq3.10}
\mathcal F(\tilde\Phi^{\infty})\varphi_i^{\infty}=\epsilon_i^{\infty}\varphi_i^{\infty}.
\end{equation}
The conditions ${\mathcal E}(\Phi^{\infty},\tilde\Phi^{\infty})=\mu$ and $\epsilon^{\infty}_i, \tilde\epsilon^{\infty}_i\leq-\gamma$ follow from the definition $\mu=\lim_{k\to\infty}{\mathcal E}(\Phi^k,\Phi^{k+1})$ and Lemma \ref{gammabound}, and therefore, by \eqref{myeq3.8.1} and \eqref{myeq3.10} we have $(\Phi^{\infty},\tilde\Phi^{\infty})\in \Gamma_{\gamma,\mu}$, which completes the proof.
\end{proof}

\section{Compactness of critical sets}\label{fourthsec}
Let  $\Gamma_{\gamma,\mu}$ be the set defined at the beginning of Section \ref{thirdsec}.
\begin{lem}\label{compact}
For any $\gamma>0$ and $\mu\in\mathbb R$, the set $\Gamma_{\gamma,\mu}$ is a compact subset of $(\bigoplus_{i=1}^NH^2(\mathbb R^3))\bigoplus(\bigoplus_{i=1}^NH^2(\mathbb R^3))$.
\end{lem}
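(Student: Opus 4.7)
The plan is to take an arbitrary sequence $\{(\Phi^j, \tilde{\Phi}^j)\} \subset \Gamma_{\gamma,\mu}$ and extract a subsequence converging in the $H^2$-topology to a point again in $\Gamma_{\gamma,\mu}$, following the same three-step pattern used in the proof of Lemma \ref{app}. First I would secure uniform bounds. Since each pair satisfies the coupled eigenvalue equations with $\epsilon_i^j, \tilde{\epsilon}_i^j \in [\inf \sigma(h), -\gamma]$, Remark \ref{Hboundrem} immediately gives $\lVert \nabla \varphi_i^j \rVert, \lVert \nabla \tilde{\varphi}_i^j \rVert \leq \tilde{C}_b$ uniformly in $j$.

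To obtain a uniform weighted $L^2$ bound analogous to Lemma \ref{expbound}, I would repeat its computation for a single critical pair: multiplying $\mathcal F(\tilde{\Phi}^j)\varphi_i^j = \epsilon_i^j \varphi_i^j$ by $\rho_m^2 \varphi_i^j$, using $\epsilon_i^j \leq -\gamma$ in place of Lemma \ref{gammabound}, and handling the exchange term via the same commutator estimate with $\rho_m$. The result is a symmetric pair of inequalities
\[
\lVert \langle x \rangle \varphi_i^j \rVert^2 \leq \tfrac{1}{2}\max_{l} \lVert \langle x \rangle \tilde{\varphi}_l^j \rVert^2 + \check{C}, \qquad \lVert \langle x \rangle \tilde{\varphi}_i^j \rVert^2 \leq \tfrac{1}{2}\max_{l} \lVert \langle x \rangle \varphi_l^j \rVert^2 + \check{C},
\]
with $\check{C}$ depending only on $\gamma$, $N$, $V$, and $\tilde{C}_b$. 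Taking the maximum over $i$ on both sides and summing closes the recursion and yields a uniform bound $\lVert \langle x \rangle \varphi_i^j \rVert, \lVert \langle x \rangle \tilde{\varphi}_i^j \rVert \leq \mathcal{C}'$. This is the step I expect to be the main obstacle, because unlike in Lemma \ref{expbound} there is no initial datum to propagate: the bound must close through the self-consistency of the coupled critical-point equations.

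Next, exactly as in Step 1 of the proof of Lemma \ref{app}, I would combine the uniform $H^1$ bound with the Rellich selection theorem on balls $B_p$ and the tail estimate from the weighted bound to extract a subsequence (still denoted $\{(\Phi^j, \tilde{\Phi}^j)\}$) converging in $(\bigoplus_{i=1}^N L^2(\mathbb{R}^3))^{\oplus 2}$ to some $(\Phi^\infty, \tilde{\Phi}^\infty)$. Orthonormality passes to the $L^2$-limit, so the limit lies in $\mathcal W \times \mathcal W$. Since $(\mathbf{e}^j, \tilde{\mathbf{e}}^j)$ takes values in the compact box $[\inf \sigma(h), -\gamma]^N \times [\inf \sigma(h), -\gamma]^N$, a further diagonal extraction makes it converge to limits $(\mathbf{e}^\infty, \tilde{\mathbf{e}}^\infty)$ still satisfying $\epsilon_i^\infty, \tilde{\epsilon}_i^\infty \leq -\gamma$.

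To upgrade to $H^2$-convergence I would mimic Step 2 of the proof of Lemma \ref{app}: rewrite each equation as $h \varphi_i^j = (\epsilon_i^j - \mathcal G(\tilde{\Phi}^j)) \varphi_i^j$, use the Hardy inequality together with the uniform $H^1$ bound on $\tilde{\Phi}^j$ to control $\lVert h(\varphi_i^{j_1} - \varphi_i^{j_2})\rVert$ by $\lVert \Phi^{j_1} - \Phi^{j_2}\rVert_{\bigoplus L^2} + \lVert \tilde{\Phi}^{j_1} - \tilde{\Phi}^{j_2}\rVert_{\bigoplus L^2} + |\mathbf e^{j_1} - \mathbf e^{j_2}|$, and invoke the $h$-boundedness of $\Delta$; the symmetric argument handles $\tilde{\varphi}_i^j$. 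Passing to the $L^2$-limit in the coupled eigenvalue equations (the nonlinear terms $\mathcal G(\tilde{\Phi}^j)\varphi_i^j$ converge because $\mathcal G$ is continuous from $\bigoplus_{i=1}^N H^1(\mathbb R^3)$ to the bounded operators on $L^2$, as used in Step 3 of Lemma \ref{app}) produces $\mathcal F(\tilde{\Phi}^\infty)\varphi_i^\infty = \epsilon_i^\infty \varphi_i^\infty$ and its symmetric counterpart, while continuity of $\mathcal E$ on $\mathcal W \times \mathcal W$ preserves $\mathcal E(\Phi^\infty, \tilde{\Phi}^\infty) = \mu$; hence $(\Phi^\infty, \tilde{\Phi}^\infty) \in \Gamma_{\gamma,\mu}$, proving compactness.
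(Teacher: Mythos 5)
Your proposal is correct and follows essentially the same three-step route as the paper: a uniform $H^1$ bound via Remark \ref{Hboundrem}, a uniform weighted $L^2$ decay estimate (the paper isolates this as Lemma \ref{weightbound2}), then Rellich plus tail control for $L^2$-precompactness, then bootstrapping to $H^2$ through the equation and passing to the limit. The only cosmetic difference is in closing the decay estimate: the paper sums all $2N$ inequalities so that the unknown terms $\lVert\rho_m\varphi_i\rVert^2$, $\lVert\rho_m\tilde\varphi_i\rVert^2$ cancel exactly, whereas you close per-$i$ by a coupled fixed-point argument on $\max_i\lVert\rho_m\varphi_i\rVert^2$ and $\max_i\lVert\rho_m\tilde\varphi_i\rVert^2$; the latter is fine, but to avoid circularity one must first obtain the coupled bound with the truncated weight $\rho_m$ (automatically finite) and only then pass $m\to\infty$ via Fatou, rather than stating the inequality directly with $\langle x\rangle$.
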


The proof of this lemma is similar to that of Lemma \ref{app}. Therefore, we prepare the corresponding decay estimate.
\begin{lem}\label{weightbound2}
Let $\gamma>0$ and $\mu\in\mathbb R$. Then there exists a constant $C'_{\gamma}$ such that for any $(\Phi,\tilde\Phi)\in \Gamma_{\gamma,\mu}$ we have
$$\lVert\langle x\rangle\Phi\rVert_{\bigoplus_{i=1}^NL^2(\mathbb R^3)},\lVert\langle x\rangle\tilde\Phi\rVert_{\bigoplus_{i=1}^NL^2(\mathbb R^3)}\leq C'_{\gamma}.$$
\end{lem}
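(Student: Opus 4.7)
The plan is to run the cutoff computation of Lemma~\ref{expbound} on both stationary equations defining $\Gamma_{\gamma,\mu}$, and then to close the argument by a symmetric self-bounding inequality rather than by induction on a step index. Fix $(\Phi,\tilde\Phi)\in\Gamma_{\gamma,\mu}$. The bound $\epsilon_i,\tilde\epsilon_i\leq-\gamma$ is built into the definition of $\Gamma_{\gamma,\mu}$ (taking the place of Lemma~\ref{gammabound} from the SCF setting), and by Remark~\ref{Hboundrem} together with $\mathcal G(\tilde\Phi),\mathcal G(\Phi)\geq 0$ we also have $\epsilon_i,\tilde\epsilon_i\geq\inf\sigma(h)$; hence Lemma~\ref{Hbound} with $\nu:=|\inf\sigma(h)|=:b$ supplies the uniform gradient bound $\|\nabla\varphi_i\|,\|\nabla\tilde\varphi_i\|\leq\tilde C_b$ needed to control the Coulomb integral on $|x|\leq r_0$.

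Testing $\mathcal F(\tilde\Phi)\varphi_i=\epsilon_i\varphi_i$ against $\rho_m^2\varphi_i$ and repeating the derivation of \eqref{myeq3.4.1}--\eqref{myeq3.7} verbatim, with $\varphi_j^k$ replaced by $\tilde\varphi_j$ and $\varphi_i^{k+1}$ by $\varphi_i$, is straightforward. The only change is that the induction-supplied bound $\|\rho_m\varphi_j^k\|^2\leq C_k$ is no longer available, so the corresponding contribution from the exchange operator is left as the unknown $(4N)^{-1}\gamma\sum_{j=1}^N\|\rho_m\tilde\varphi_j\|^2$ instead of being absorbed into $(\gamma/4)C_k$. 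Proceeding as in the original proof (splitting into $|x|\leq r_0$ and $|x|>r_0$, using $\epsilon_i\leq-\gamma$, and dividing by $\gamma/2$), one obtains a constant $D>0$ depending only on $\gamma,N,r_0,\sum_l Z_l$ and $b$ such that
$$\|\rho_m\varphi_i\|^2\leq D+\frac{1}{2N}\sum_{j=1}^N\|\rho_m\tilde\varphi_j\|^2,\qquad 1\leq i\leq N,$$
and the identical argument applied to $\mathcal F(\Phi)\tilde\varphi_i=\tilde\epsilon_i\tilde\varphi_i$ yields the symmetric inequality with $\Phi$ and $\tilde\Phi$ exchanged.

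Summing each of these inequalities over $i$ and adding, the quantity $M_m:=\sum_{i=1}^N(\|\rho_m\varphi_i\|^2+\|\rho_m\tilde\varphi_i\|^2)$ satisfies $M_m\leq 2ND+\tfrac12 M_m$, so that $M_m\leq 4ND$ uniformly in $m$ and in $(\Phi,\tilde\Phi)\in\Gamma_{\gamma,\mu}$. Since $\rho_m(x)\to\langle x\rangle$ pointwise, Fatou's lemma yields the uniform bound $\sum_{i=1}^N(\|\langle x\rangle\varphi_i\|^2+\|\langle x\rangle\tilde\varphi_i\|^2)\leq 4ND$, from which one can take $C'_\gamma:=(4ND)^{1/2}$; in particular this constant is even independent of $\mu$.

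I do not anticipate any substantial obstacle; the only point requiring care is to verify that the coupling coefficient is strictly less than one. This is a direct consequence of the Young step $\|\rho_m\tilde\varphi_j\|^{1/2}\|\rho_m\varphi_i\|^{1/2}\leq(2\gamma)^{-1}N+(2N)^{-1}\gamma\|\rho_m\tilde\varphi_j\|\|\rho_m\varphi_i\|$ used identically in Lemma~\ref{expbound}: after the subsequent $XY\leq(X^2+Y^2)/2$ and division by $\gamma/2$, the coefficient in front of $\sum_j\|\rho_m\tilde\varphi_j\|^2$ is exactly $1/(2N)$, so that summing over $i$ produces the coupling constant $1/2<1$ that makes the self-bounding argument close.
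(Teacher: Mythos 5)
Your proof is correct and is essentially the same as the paper's: both run the cutoff computation of Lemma~\ref{expbound} on each of the two stationary equations defining $\Gamma_{\gamma,\mu}$, using Remark~\ref{Hboundrem} to control the Coulomb term on $|x|\le r_0$, and both close by exploiting the symmetry between $\Phi$ and $\tilde\Phi$. The only (cosmetic) difference is bookkeeping: the paper adds the two families of inequalities first, so that the terms $-\tfrac{\gamma}{4N}\sum_j\lVert\rho_m\tilde\varphi_j\rVert^2$ and $-\tfrac{\gamma}{4}\lVert\rho_m\varphi_i\rVert^2$ are absorbed against $-\epsilon_i\ge\gamma$ in a single step, leaving $V+\gamma/2$ with a manifestly positive coefficient; you instead divide first to exhibit the coupled self-bounding inequalities with coupling constant $1/2<1$ and then sum, which is the same arithmetic packaged as a fixed-point observation.
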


\begin{proof}
By exactly the same way as \eqref{myeq3.7} we obtain
\begin{align*}
0\geq&-1-(2\gamma)^{-1}N^2-(4N)^{-1}\gamma\sum_{j=1}^N\lVert\rho_m\tilde\varphi_j\rVert^2-4^{-1}\gamma \lVert\rho_m\varphi_i\rVert^2\\
&+\langle (V(x)-\epsilon_i)\varphi_i,\rho_m^2\varphi_i\rangle,\quad 1\leq i\leq N,
\end{align*}
and
\begin{align*}
0\geq&-1-(2\gamma)^{-1}N^2-(4N)^{-1}\gamma\sum_{j=1}^N\lVert\rho_m\varphi_i\rVert^2-4^{-1}\gamma \lVert\rho_m\tilde\varphi_i\rVert^2\\
&+\langle (V(x)-\tilde\epsilon_i)\tilde\varphi_i,\rho_m^2\tilde\varphi_i\rangle,\quad 1\leq i\leq N.
\end{align*}
Adding the both sides of the inequalities for $1\leq i\leq N$ and noting that $\epsilon_i, \tilde\epsilon_i\leq-\gamma$ we have
$$0\geq-2N-\gamma^{-1}N^3+\sum_{i=1}^N\langle (V(x)+\gamma/2)\varphi_i,\rho_m^2\varphi_i\rangle+\sum_{i=1}^N\langle (V(x)+\gamma/2)\varphi_i,\rho_m^2\varphi_i\rangle.$$
Let $r_1>0$ be a constant such that $|V(x)|\leq \gamma/4$ for $|x|>r_1$. Decomposing the integral into those on $|x|\leq r_1$ and $|x|> r_1$ we have
\begin{align*}
&4^{-1}\gamma\sum_{i=1}^N\int_{|x|>r_1}\rho_m^2(x)(|\varphi_i(x)|^2+|\tilde\varphi_i(x)|^2)dx\\
&\quad\leq2N+\gamma^{-1}N^3+\sum_{i=1}^N\int_{|x|\leq r_1}\rho_m^2(x) (|V(x)|+\gamma/2)(|\varphi_i(x)|^2+|\tilde\varphi_i(x)|^2)dx\\
&\quad\leq2N+\gamma^{-1}N^3+N(1+r_1^2)\left(4\sum_lZ_l\tilde C_b+\gamma\right),
\end{align*}
where $\tilde C_b$ is the constant in Remark \ref{Hboundrem}.
Fatou's lemma yields
\begin{align*}
&4^{-1}\gamma\sum_{i=1}^N\int_{|x|>r_1}\langle x\rangle^2(|\varphi_i(x)|^2+|\tilde\varphi_i(x)|^2)dx\\
&\quad\leq2N+\gamma^{-1}N^3+N(1+r_1^2)\left(4\sum_lZ_l\tilde C_b+\gamma\right).
\end{align*}
Noting that
$$\int_{|x|\leq r_1}\langle x\rangle^2(|\varphi_i(x)|^2+|\tilde\varphi_i(x)|^2)dx\leq2(1+r_1^2),$$
we obtain
\begin{align*}
&\sum_{i=1}^N(\lVert \langle x\rangle \varphi_i\rVert^2+\lVert \langle x\rangle \tilde\varphi_i\rVert^2)\\
&\quad\leq4\gamma^{-1}\left(2N+\gamma^{-1}N^3+N(1+r_1^2)\left(4\sum_lZ_l\tilde C_b+\gamma\right)\right)+2N(1+r_1^2).
\end{align*}
Thus if we set
\begin{align*}
C'_{\gamma}:=&\Bigg\{4\gamma^{-1}\left(2N+\gamma^{-1}N^3+N(1+r_1^2)\left(4\sum_lZ_l\tilde C_b+\gamma\right)\right)\\
&+2N(1+r_1^2)\Bigg\}^{1/2},
\end{align*}
the result follows.
\end{proof}

\begin{proof}[Proof of Lemma \ref{compact}]
Let $\{(\Phi^k,\tilde\Phi^k)\}\subset \Gamma_{\gamma,\mu}$ be an arbitrary sequence in $\Gamma_{\gamma,\mu}$.
Using Lemma \ref{weightbound2} in the same way as in the proof of Lemma \ref{app} we can see that there exists a subsequence $\{(\Phi^{k_j},\tilde\Phi^{k_j})\}$ of $\{(\Phi^k,\tilde\Phi^k)\}$ converging to a point $(\Phi^{\infty},\tilde\Phi^{\infty})$ in $(\bigoplus_{i=1}^NH^2(\mathbb R^3))\bigoplus(\bigoplus_{i=1}^NH^2(\mathbb R^3))$, and the associated orbital energies $\mathbf e^{k_j}$ and $\tilde{\mathbf e}^{k_j}$ converge to some $\mathbf e^{\infty}=(\epsilon_1^{\infty},\dots,\epsilon_N^{\infty})$ and $\tilde{\mathbf e}^{\infty}=(\tilde\epsilon_1^{\infty},\dots,\tilde\epsilon_N^{\infty})$ respectively. Taking the limits in the both sides of
\begin{equation*}
\begin{split}
&\mathcal F(\tilde\Phi^{k_j})\varphi_i^{k_j}=\epsilon_i^{k_j}\varphi_i^{k_j}\\
&\mathcal F(\Phi^{k_j})\tilde\varphi_i^{k_j}=\tilde\epsilon_i^{k_j}\tilde\varphi_i^{k_j}
\end{split}
\qquad 1\leq i\leq N,
\end{equation*}
we obtain
\begin{equation*}
\begin{split}
&\mathcal F(\tilde\Phi^{\infty})\varphi_i^{\infty}=\epsilon_i^{\infty}\varphi_i^{\infty}\\
&\mathcal F(\Phi^{\infty})\tilde\varphi_i^{\infty}=\tilde\epsilon_i^{\infty}\tilde\varphi_i^{\infty}
\end{split}
\qquad 1\leq i\leq N.
\end{equation*}
Since ${\mathcal E}(\Phi^{\infty},\tilde\Phi^{\infty})=\mu$ and $\epsilon_i^{\infty},\tilde\epsilon_i^{\infty}\leq-\gamma,\ 1\leq i\leq N$ obviously hold, we can see that $(\Phi^{\infty},\tilde\Phi^{\infty})\in \Gamma_{\gamma,\mu}$, which completes the proof.
\end{proof}

\section{Fredholm property of Fr\'echet derivatives}\label{fifthsec}
In this section we prove that the Fr\'echet second derivatives of an auxiliary functional are decomposed into sums of an isomorphism and a compact operator.
Denote by
$$Y_1:=(\bigoplus_{i=1}^NH^2(\mathbb R^3))\bigoplus(\bigoplus_{i=1}^NH^2(\mathbb R^3))\bigoplus\mathbb R^N\bigoplus\mathbb R^N,$$
and
$$Y_2:=(\bigoplus_{i=1}^NL^2(\mathbb R^3))\bigoplus(\bigoplus_{i=1}^NL^2(\mathbb R^3))\bigoplus\mathbb R^N\bigoplus\mathbb R^N,$$
the direct sums of Banach spaces regarding $\bigoplus_{i=1}^NH^2(\mathbb R^3)$ and $\bigoplus_{i=1}^NL^2(\mathbb R^3)$ as real Banach spaces with respect to multiplication by real numbers.
Let us introduce an auxiliary functional.  We define a functional  $f:Y_1\to\mathbb R$ by
\begin{equation}\label{myeq5.0}
f(\Phi,\tilde\Phi,\mathbf e,\tilde{\mathbf e}):={\mathcal E}(\Phi,\tilde \Phi)-\sum_{i=1}^N\epsilon_i(\lVert \varphi_i\rVert^2-1)-\sum_{i=1}^N\tilde\epsilon_i(\lVert \tilde\varphi_i\rVert^2-1).
\end{equation}
We also define a bilinear form $\langle\langle \cdot,\cdot\rangle\rangle$ on $Y_1$ and $Y_2$ by
\begin{align*}
\langle\langle[\Phi^1,\tilde\Phi^1,\mathbf e^1,\tilde{\mathbf e}^1],[\Phi^2,\tilde\Phi^2,\mathbf e^2,\tilde{\mathbf e}^2]\rangle\rangle:=&2\sum_{i=1}^N\mathrm{Re}\, \langle\varphi_i^1,\varphi_i^2\rangle+2\sum_{i=1}^N\mathrm{Re}\, \langle\tilde\varphi_i^1,\tilde\varphi_i^2\rangle\\
&+\sum_{i=1}^N\epsilon_i^1\epsilon_i^2+\sum_{i=1}^N\tilde\epsilon_i^1\tilde\epsilon_i^2,
\end{align*}
for $[\Phi^j,\tilde\Phi^j,\mathbf e^j,\tilde{\mathbf e}^j]\in Y_j,\ j=1,2$.
Then the Fr\'echet derivative of $f$ is given by
$$df([\Phi^0,\tilde\Phi^0,\mathbf e^0,\tilde{\mathbf e}^0],[\Phi^1,\tilde\Phi^1,\mathbf e^1,\tilde{\mathbf e}^1])=\langle\langle[\Phi^1,\tilde\Phi^1,\mathbf e^1,\tilde{\mathbf e}^1],F(\Phi^0,\tilde\Phi^0,\mathbf e^0,\tilde{\mathbf e}^0)\rangle\rangle,$$
where $F:Y_1\to Y_2$ is defined by
\begin{align*}
&F(\Phi,\tilde\Phi,\mathbf e,\tilde{\mathbf e})\\
&\quad=\big[{}^t(F_1(\Phi,\tilde\Phi,\mathbf e),\dots,F_N(\Phi,\tilde\Phi,\mathbf e)),{}^t(F_1(\tilde\Phi,\Phi,\tilde{\mathbf e}),\dots,F_N(\tilde\Phi,\Phi,\tilde{\mathbf e})),\\
&\qquad(1-\lVert\varphi_1\rVert^2,\dots,1-\lVert\varphi_N\rVert^2),(1-\lVert\tilde\varphi_1\rVert^2,\dots,1-\lVert\tilde\varphi_N\rVert^2)\big].
\end{align*}
Here $F_i:(\bigoplus_{i=1}^NH^2(\mathbb R^3))\bigoplus(\bigoplus_{i=1}^NH^2(\mathbb R^3))\bigoplus\mathbb R^N\to L^2(\mathbb R^3)$ is given by
$$F_i(\Phi,\tilde\Phi,\mathbf e):=\mathcal F(\tilde \Phi)\varphi_i-\epsilon_i\varphi_i.$$

\begin{lem}\label{Fredholm}
For any $[\Phi',\tilde\Phi',\mathbf e',\tilde{\mathbf e}']\in Y_1$ satisfying $\epsilon_i', \tilde\epsilon_i'<0,\ 1\leq i\leq N$, the Fr\'echet derivative $F'(\Phi',\tilde\Phi',\mathbf e',\tilde{\mathbf e}')$ of $F(\Phi,\tilde\Phi,\mathbf e,\tilde{\mathbf e})$ at $[\Phi',\tilde\Phi',\mathbf e',\tilde{\mathbf e}']$ is written as
$$F'(\Phi',\tilde\Phi',\mathbf e',\tilde{\mathbf e}')=L+M,$$
where $\mathbf e'=(\epsilon_1'\dots,\epsilon_N')$, $\tilde{\mathbf e}'=(\tilde\epsilon_1'\dots,\tilde\epsilon_N')$, $L$ is an isomorphism of $Y_1$ onto $Y_2$ and $M$ is a compact operator.
\end{lem}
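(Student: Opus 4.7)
The plan is to compute the Fr\'echet derivative $F'(\Phi',\tilde\Phi',\mathbf e',\tilde{\mathbf e}')$ in closed form, then to define a block-diagonal operator $L$ that absorbs every non-compact contribution, and finally to verify that the residual $M:=F'-L$ is compact. A direct calculation of the G\^ateaux derivative shows that $F'(\Phi',\tilde\Phi',\mathbf e',\tilde{\mathbf e}')$ sends a tangent vector $[\Phi,\tilde\Phi,\mathbf e,\tilde{\mathbf e}]\in Y_1$ to the element of $Y_2$ whose $i$-th entry in the first $\bigoplus_{i=1}^{N}L^{2}(\mathbb R^{3})$-block is
\begin{equation*}
\mathcal F(\tilde\Phi')\varphi_i-\epsilon_i'\varphi_i+\bigl(\mathcal G'(\tilde\Phi')\tilde\Phi\bigr)\varphi_i'-\epsilon_i\varphi_i',
\end{equation*}
with the analogous entry in the second $\bigoplus_{i=1}^{N} L^{2}(\mathbb R^{3})$-block obtained by swapping $(\Phi,\mathbf e)\leftrightarrow(\tilde\Phi,\tilde{\mathbf e})$, and with $\bigl(-2\mathrm{Re}\langle\varphi_i',\varphi_i\rangle\bigr)_i$, $\bigl(-2\mathrm{Re}\langle\tilde\varphi_i',\tilde\varphi_i\rangle\bigr)_i$ in the two $\mathbb R^{N}$-blocks. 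I then set
\begin{equation*}
L[\Phi,\tilde\Phi,\mathbf e,\tilde{\mathbf e}]:=\bigl[\bigl((-\Delta-\epsilon_i')\varphi_i\bigr)_i,\ \bigl((-\Delta-\tilde\epsilon_i')\tilde\varphi_i\bigr)_i,\ \mathbf e,\ \tilde{\mathbf e}\bigr].
\end{equation*}

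Checking that $L\colon Y_1\to Y_2$ is an isomorphism reduces to block-by-block inversion: the two $\mathbb R^{N}$-blocks are the identity, and the hypothesis $\epsilon_i',\tilde\epsilon_i'<0$ makes each $-\Delta-\epsilon_i'$ and $-\Delta-\tilde\epsilon_i'$ a strictly positive self-adjoint operator on $L^{2}(\mathbb R^{3})$ with domain $H^{2}(\mathbb R^{3})$, hence a topological isomorphism of $H^{2}(\mathbb R^{3})$ onto $L^{2}(\mathbb R^{3})$. Subtracting $L$ from the formula for $F'$ above, one finds that $M$'s $i$-th entry in the first $\bigoplus_{i=1}^{N}L^{2}$-block equals
\begin{equation*}
(V+\mathcal G(\tilde\Phi'))\varphi_i+(\mathcal G'(\tilde\Phi')\tilde\Phi)\varphi_i'-\epsilon_i\varphi_i',
\end{equation*}
(with the analogous entry in the second block), while the $\mathbb R^{N}$-blocks become $(-2\mathrm{Re}\langle\varphi_i',\varphi_i\rangle-\epsilon_i)_i$ and $(-2\mathrm{Re}\langle\tilde\varphi_i',\tilde\varphi_i\rangle-\tilde\epsilon_i)_i$. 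Because both $\mathbb R^{N}$-blocks take values in a finite-dimensional space, their contribution to $M$ is automatically compact.

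It therefore remains to verify compactness of the first two $\bigoplus_{i=1}^{N}L^{2}$-blocks of $M$ from $\bigoplus_{i=1}^{N} H^{2}(\mathbb R^{3})$-valued inputs into $\bigoplus_{i=1}^{N} L^{2}(\mathbb R^{3})$. The maps $\varphi_i\mapsto V\varphi_i$ and $\varphi_i\mapsto\mathcal G(\tilde\Phi')\varphi_i$ are compact from $H^{2}(\mathbb R^{3})$ to $L^{2}(\mathbb R^{3})$: this uses the $\Delta$-compactness of the Coulomb potential $V$ and the $h$-compactness of $\mathcal G(\tilde\Phi')$ already established in the proof of Lemma \ref{gammabound} ($R^{\tilde\Phi'}$ lies in $C_0(\mathbb R^{3})$ and $S^{\tilde\Phi'}$ is Hilbert--Schmidt). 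The maps $\epsilon_i\mapsto\epsilon_i\varphi_i'$ have one-dimensional range, hence are compact. The only delicate contribution is the off-block cross term $\tilde\Phi\mapsto(\mathcal G'(\tilde\Phi')\tilde\Phi)\varphi_i'$: upon differentiating $R^{\tilde\Phi}$ and $S^{\tilde\Phi}$ it splits into a direct Hartree piece whose integral kernel is $k_{ij}(x,y)=\varphi_i'(x)|x-y|^{-1}\tilde\varphi_j'^{*}(y)$ (together with its conjugate-linear partner), a multiplication piece $c_{ij}(x)\tilde\varphi_j(x)$ with $c_{ij}(x):=\int|x-y|^{-1}\tilde\varphi_j'^{*}(y)\varphi_i'(y)\,dy$, and a third piece $\tilde\varphi_j'(x)\int|x-y|^{-1}\tilde\varphi_j^{*}(y)\varphi_i'(y)\,dy$. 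Hardy's inequality gives $\iint|k_{ij}(x,y)|^{2}\,dx\,dy\leq 4\|\nabla\varphi_i'\|^{2}\|\tilde\varphi_j'\|^{2}<\infty$, so the Hartree piece is Hilbert--Schmidt on $L^{2}(\mathbb R^{3})$ and therefore compact from $H^{2}(\mathbb R^{3})$ to $L^{2}(\mathbb R^{3})$; since $\tilde\varphi_j'^{*}\varphi_i'\in L^{1}\cap L^{\infty}$ one sees that $c_{ij}\in C_0(\mathbb R^{3})$, so multiplication by $c_{ij}$ is compact from $H^{2}(\mathbb R^{3})$ to $L^{2}(\mathbb R^{3})$ by a standard Rellich-plus-decay argument; the third piece is handled by the same Hilbert--Schmidt estimate applied to the kernel $\tilde\varphi_j'(x)|x-y|^{-1}\varphi_i'(y)$, using that $f\mapsto f^{*}$ is an $\mathbb R$-linear isometry of $L^{2}(\mathbb R^{3})$.

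The principal difficulty is this cross term $(\mathcal G'(\tilde\Phi')\tilde\Phi)\varphi_i'$: it sits off the block-diagonal of $L$ so cannot be absorbed there, and the complex conjugates inside $R^{\tilde\Phi}$ and $S^{\tilde\Phi}$ force the calculation to proceed in the real-linear category with careful bookkeeping of conjugate-linear contributions. Once the term is expressed as the sum of the three explicit integral operators above, each piece is compact by either a Hardy--Hilbert--Schmidt estimate or a Rellich-plus-decay argument, which yields the claimed decomposition $F'=L+M$.
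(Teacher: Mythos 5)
Your proof is correct, but it takes a genuinely different route from the paper's, and the difference is worth noting. The paper's choice of isomorphism is $\mathcal L = \mathcal H_1 + \mathcal R - \mathcal Q$, where $\mathcal H_1$ consists of the spectrally truncated operators $h(1-E(-\epsilon/2)) - \epsilon_i'$ (and similarly with $\tilde\epsilon_i'$), and the Hartree block $\mathcal R - \mathcal Q$ is shown to be positive semidefinite by a completing-the-square argument with the integral $\int|x-y|^{-1}|w_i(x)\tilde\varphi_j'(y) - \tilde w_j(x)\varphi_i'(y)|^2\,dx\,dy$; the compact remainder $\mathcal M$ then collects the exchange terms $-\mathcal S + \hat{\mathcal S} + \check{\mathcal S} - {}^t\check{\mathcal S}$ together with $\mathcal H_2 = hE(-\epsilon/2)$. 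You instead take the minimal block-diagonal isomorphism $\mathrm{diag}[-\Delta - \epsilon_i', \ldots, -\Delta - \tilde\epsilon_i', \ldots]$, whose invertibility on $H^2 \to L^2$ follows immediately from $\epsilon_i', \tilde\epsilon_i' < 0$, and push the entire potential $V + \mathcal G(\tilde\Phi')$ as well as all the cross terms $T_{ij}$ into the compact part $M$. The extra observation you need --- and do supply --- is that the nuclear potential $V$, the Hartree potential $R^{\tilde\Phi'}$, and the off-diagonal multiplication functions $Q_{ij}(x) = \int|x-y|^{-1}\tilde\varphi_j'^{*}(y)\varphi_i'(y)\,dy$ are all bounded functions vanishing at infinity and hence give $\Delta$-compact multiplication operators from $H^2(\mathbb R^3)$ into $L^2(\mathbb R^3)$. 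This spares you both the spectral decomposition of $h$ and the positivity argument for $\mathcal R - \mathcal Q$, at the cost of a Rellich-plus-decay check; since the Fredholm conclusion only requires some decomposition $L+M$ with $L$ an isomorphism and $M$ compact, either choice is equally admissible, and yours is somewhat more economical.
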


\begin{proof}
By the assumption clearly there exists a constant $\epsilon>0$ such that $\epsilon'_i,\tilde\epsilon'_i\leq-\epsilon$, $1\leq i\leq N$.
For a mapping $G(\Phi,\tilde\Phi):(\bigoplus_{i=1}^NH^2(\mathbb R^3))\bigoplus(\bigoplus_{i=1}^N H^2(\mathbb R^3))\to L^2(\mathbb R^3)$ we denote by $G_{\varphi_i}'=G_{\varphi_i}'(\Phi',\tilde\Phi'):H^2(\mathbb R^3)\to L^2(\mathbb R^3)$ the partial derivative
\begin{align*}
&G_{\varphi_i}'(\Phi',\tilde\Phi')h\\
&\quad:=\lim_{t\to0}[G(\varphi_1'\dots,\varphi_i'+th,\dots,\varphi_N',\tilde\Phi')-G(\varphi_1'\dots,\varphi_i',\dots,\varphi_N',\tilde\Phi')]/t,
\end{align*}
with respect to $\varphi_i$ at $(\Phi',\tilde\Phi')$, where $\Phi'={}^t(\varphi_1',\dots,\varphi'_N)$, $\tilde\Phi'={}^t(\tilde\varphi_1',\dots,\tilde\varphi'_N)$. The partial derivative $G_{\tilde\varphi_i}'=G_{\tilde\varphi_i}'(\Phi',\tilde\Phi'):H^2(\mathbb R^3)\to L^2(\mathbb R^3)$ with respect to $\tilde\varphi_i$ at $(\Phi',\tilde\Phi')$ is defined in the same way. For fixed $\mathbf e'$ and $\tilde{\mathbf e}'$ we shall consider the Fr\'echet derivative of the mapping $\check F(\Phi,\tilde \Phi):(\bigoplus_{i=1}^NH^2(\mathbb R^3))\bigoplus(\bigoplus_{i=1}^NH^2 (\mathbb R^3))\to (\bigoplus_{i=1}^NL^2(\mathbb R^3))\bigoplus(\bigoplus_{i=1}^N L^2(\mathbb R^3))$ given by
\begin{align*}
&\check F (\Phi,\tilde \Phi)\\
&\quad:=[{}^t(F_1(\Phi,\tilde\Phi,\mathbf e'),\dots,F_N(\Phi,\tilde\Phi,\mathbf e')),{}^t(F_1(\tilde\Phi,\Phi,\tilde{\mathbf e}'),\dots,F_N(\tilde\Phi,\Phi,\tilde{\mathbf e}'))].
\end{align*}
The Fr\'echet derivative
$$\check F'(\Phi',\tilde\Phi'):(\bigoplus_{i=1}^NH^2(\mathbb R^3))\bigoplus(\bigoplus_{i=1}^NH^2(\mathbb R^3))\to (\bigoplus_{i=1}^NL^2(\mathbb R^3))\bigoplus(\bigoplus_{i=1}^NL^2(\mathbb R^3)),$$
of $\check F$ at $(\Phi',\tilde\Phi')$ can be expressed as a $2N\times 2N$ matrix of operators from $H^2(\mathbb R^3)$ to $L^2(\mathbb R^3)$ as
\begin{equation}\label{myeq5.1}
\check F'(\Phi',\tilde\Phi')=
\begin{pmatrix}
K(\tilde\Phi',\mathbf e') & T^{\Phi',\tilde\Phi'}\\
T^{\tilde\Phi',\Phi'} & K(\Phi',\tilde{\mathbf e}')
\end{pmatrix},
\end{equation}
where $K(\tilde\Phi',\mathbf e')$ is a diagonal matrix defined by
$$K(\tilde\Phi',\mathbf e'):=\mathrm{diag}\, [\mathcal F(\tilde\Phi')-\epsilon_1',\dots,\mathcal F(\tilde \Phi')-\epsilon_N'],$$
and the $N\times N$ matrix $T^{\Phi',\tilde\Phi'}$ of operators is given by
\begin{align*}
T_{ij}^{\Phi',\tilde\Phi'}=[F_i(\Phi,\tilde \Phi,\mathbf e')]'_{\tilde\varphi_j}=\hat S_{ij}^{\Phi',\tilde\Phi'}+\check S_{ij}^{\Phi',\tilde\Phi'}- Q_{ij}^{\Phi',\tilde\Phi'}-\check S_{ji}^{\tilde\Phi',\Phi'}.
\end{align*}
Here
\begin{align*}
(\hat S_{ij}^{\Phi',\tilde\Phi'}w)(x)&:=\left(\int|x-y|^{-1}\overline{\tilde\varphi_j'}(y)w(y)dy\right)\varphi_i'(x),\\
(\check S_{ij}^{\Phi',\tilde\Phi'}w)(x)&:=\left(\int|x-y|^{-1}\overline{w}(y)\tilde\varphi_j'(y)dy\right)\varphi_i'(x),\\
(Q_{ij}^{\Phi',\tilde\Phi'}w)(x)&:=\left(\int|x-y|^{-1}\overline{\tilde\varphi_j'}(y)\varphi_i'(y)dy\right)w(x).\\
\end{align*}
Let us define the matrices $\hat S^{\Phi',\tilde\Phi'}$, $\check S^{\Phi',\tilde\Phi'}$ and $Q^{\Phi',\tilde\Phi'}$ by the matrix elements $\hat S_{ij}^{\Phi',\tilde\Phi'}$, $\check S_{ij}^{\Phi',\tilde\Phi'}$ and $Q_{ij}^{\Phi',\tilde\Phi'}$ respectively.
We can rewrite \eqref{myeq5.1} as
$$\check F'(\Phi',\tilde\Phi')=\mathcal K+\mathcal T,$$
with
$$\mathcal K:=
\begin{pmatrix}
K(\tilde\Phi',\mathbf e') & 0\\
0 & K(\Phi',\tilde{\mathbf e}')
\end{pmatrix},\ \mathcal T:=
\begin{pmatrix}
0 & T^{\Phi',\tilde\Phi'}\\
T^{\tilde\Phi',\Phi'} & 0
\end{pmatrix}.$$
The matrices $\hat{\mathcal S}$, $\check{\mathcal S}$ and $\mathcal Q$ are defined replacing $T^{\Phi',\tilde\Phi'}$ in $\mathcal T$ by $\hat S^{\Phi',\tilde\Phi'}$, $\check S^{\Phi',\tilde\Phi'}$ and $Q^{\Phi',\tilde\Phi'}$ respectively.
Then we have
$$\mathcal T=\hat{\mathcal S}+\check{\mathcal S}-\mathcal Q-{}^t\check{\mathcal S}.$$
On the other hand $\mathcal K$ is decomposed as
$$\mathcal K=\mathcal H+\mathcal R-\mathcal S,$$
where
\begin{align*}
\mathcal H&:=\mathrm{diag}\, [h-\epsilon_1',\dots,h-\epsilon_N',h-\tilde\epsilon_1',\dots,h-\tilde\epsilon_N'],\\
\mathcal R&:=\mathrm{diag}\, [R^{\tilde\Phi'},\dots,R^{\tilde\Phi'},R^{\Phi'},\dots,R^{\Phi'}],\\
\mathcal S&:=\mathrm{diag}\, [S^{\tilde\Phi'},\dots,S^{\tilde\Phi'},S^{\Phi'},\dots,S^{\Phi'}].
\end{align*}
Since $S^{\tilde\Phi'}_{ii}$, $S^{\Phi'}_{ii}$, $\hat S^{\Phi',\tilde\Phi'}_{ij}$ and $\check S^{\Phi',\tilde\Phi'}_{ij}$ are integral operators of the Hilbert-Schmidt type, they are compact. Thus $\mathcal S$, $\hat{\mathcal S}$, $\check {\mathcal S}$ and ${}^t\check {\mathcal S}$ are compact operators.

We shall show that $\mathcal R-\mathcal Q$ is a positive definite operator as an operator on the Hilbert space $(\bigoplus_{i=1}^NL^2(\mathbb R^3))\bigoplus(\bigoplus_{i=1}^NL^2(\mathbb R^3))$. Set $W:={}^t(w_1,\dots,w_N,\tilde w_1,\dots,\tilde w_N)\newline\in(\bigoplus_{i=1}^NL^2(\mathbb R^3))\bigoplus(\bigoplus_{i=1}^NL^2(\mathbb R^3))$. Then we have
\begin{equation}\label{mye5.2}
\begin{split}
\langle W,(\mathcal R-\mathcal Q)W\rangle&=\sum_{i=1}^N\langle w_i,R^{\tilde\Phi'}w_i\rangle+\sum_{i=1}^N\langle \tilde w_i,R^{\Phi'}\tilde w_i\rangle\\
&\quad-\sum_{i,j=1}^N\langle w_i,Q_{ij}^{\Phi',\tilde\Phi'}\tilde w_j\rangle-\sum_{i,j=1}^N\langle \tilde w_i,Q_{ij}^{\tilde\Phi',\Phi'}w_j\rangle\\
&=\sum_{i,j=1}^N\{\langle w_i,Q_{jj}^{\tilde \Phi'}w_i\rangle+\langle \tilde w_j,Q_{ii}^{\Phi'}\tilde w_j\rangle\\
&\quad-\langle w_i,Q_{ij}^{\Phi',\tilde\Phi'}\tilde w_j\rangle-\langle \tilde w_j,Q_{ji}^{\tilde\Phi',\Phi'}w_i\rangle\}.
\end{split}
\end{equation}
On the other hand we have
\begin{equation}\label{mye5.3}
\begin{split}
&\int|x-y|^{-1}|w_i(x)\tilde\varphi_j'(y)-\tilde w_j(x)\varphi_i'(y)|^2dxdy\\
&\quad=\langle w_i,Q_{jj}^{\tilde \Phi'}w_i\rangle+\langle \tilde w_j,Q_{ii}^{\Phi'}\tilde w_j\rangle-\langle w_i,Q_{ij}^{\Phi',\tilde\Phi'}\tilde w_j\rangle-\langle \tilde w_j,Q_{ji}^{\tilde\Phi',\Phi'}w_i\rangle.
\end{split}
\end{equation}
Since the left-hand side is positive, the right-hand side is also positive. Therefore, comparing \eqref{mye5.2} with \eqref{mye5.3} we can see that $\mathcal R-\mathcal Q$ is a positive definite operator.

Next we shall consider $\mathcal H$. We denote the resolution of identity of $h$ by $E(\lambda)$. Then we can  decompose $h$ as
$$h=hE(-\epsilon/2)+h(1-E(-\epsilon/2)).$$
Thus $\mathcal H$ is decomposed as $\mathcal H=\mathcal H_1+\mathcal H_2$, where
\begin{align*}
\mathcal H_1:=&\mathrm{diag}\, [h(1-E(-\epsilon/2))-\epsilon_1',\dots,h(1-E(-\epsilon/2))-\epsilon_N',\\
&h(1-E(-\epsilon/2))-\tilde\epsilon_1',\dots,h(1-E(-\epsilon/2))-\tilde\epsilon_N'],
\end{align*}
and
$$\mathcal H_2:=\mathrm{diag}\, [hE(-\epsilon/2),\dots,hE(-\epsilon/2),hE(-\epsilon/2),\dots,hE(-\epsilon/2)].$$
Since $\epsilon_i', \tilde\epsilon_i'\leq-\epsilon,\ 1\leq i\leq N$ we have $h(1-E(-\epsilon/2))-\epsilon_i'\geq\epsilon/2$, and $h(1-E(-\epsilon/2))-\tilde\epsilon_i'\geq\epsilon/2$, so that $\mathcal H_1\geq \epsilon/2$. As for $\mathcal H_2$, $\inf\sigma_{ess}(h)=0$ implies that $hE(-\epsilon/2)$ is a compact operator. Thus $\mathcal H_2$ is a compact operator.

The Fr\'echet derivative $\check F'(\Phi',\tilde\Phi')$ is written as
$$\check F'(\Phi',\tilde\Phi')=\mathcal H_1+\mathcal H_2+\mathcal R-\mathcal S + \hat{\mathcal S}+\check{\mathcal S}-\mathcal Q-{}^t\check{\mathcal S}=\mathcal L+\mathcal M,$$
where $\mathcal L:=\mathcal H_1+\mathcal R-\mathcal Q$ and $\mathcal M:=\mathcal H_2-\mathcal S + \hat{\mathcal S}+\check{\mathcal S}-{}^t\check{\mathcal S}$. Since $\mathcal H_1\geq\epsilon/2$ and $\mathcal R-\mathcal Q\geq0$, we have $\mathcal L\geq \epsilon/2$, and thus $\mathcal L$ is invertible. Since $\mathcal L$ can be regarded as a self-adjoint operator in $(\bigoplus_{i=1}^NL^2(\mathbb R^3))\bigoplus(\bigoplus_{i=1}^NL^2(\mathbb R^3))$, we can see that $\mathrm{Ran}\, \mathcal L=(\bigoplus_{i=1}^NL^2(\mathbb R^3))\bigoplus(\bigoplus_{i=1}^NL^2(\mathbb R^3))$ and it is an isomorphism of $(\bigoplus_{i=1}^NH^2(\mathbb R^3))\bigoplus(\bigoplus_{i=1}^NH^2(\mathbb R^3))$ onto $(\bigoplus_{i=1}^NL^2(\mathbb R^3))\bigoplus(\bigoplus_{i=1}^NL^2(\mathbb R^3))$. Moreover, since each term in $\mathcal M$ is a compact operator, $\mathcal M$ is also a compact operator.

For fixed $\Phi',\tilde\Phi'$ we set
$$\hat F(\mathbf e,\tilde{\mathbf e}):={}^t(F_1(\Phi',\tilde\Phi',\mathbf e),\dots,F_N(\Phi',\tilde\Phi',\mathbf e),F_1(\tilde\Phi',\Phi',\tilde{\mathbf e}),\dots,F_N(\tilde\Phi',\Phi',\tilde{\mathbf e})).$$
Then we obtain
\begin{align*}
&F'(\Phi',\tilde\Phi',\mathbf e',\tilde{\mathbf e}')[\Phi,\tilde\Phi,\mathbf e,\tilde{\mathbf e}]\\
&\quad=[\check F'(\Phi',\tilde\Phi')[\Phi,\tilde\Phi]+\hat F'(\mathbf e',\tilde{\mathbf e}')[\mathbf e,\tilde{\mathbf e}],\\
&\qquad-2\mathrm{Re}\, \langle \varphi_1,\varphi_1'\rangle,\dots,-2\mathrm{Re}\, \langle \varphi_N,\varphi_N'\rangle,-2\mathrm{Re}\, \langle \tilde\varphi_1,\tilde\varphi_1'\rangle,\dots,-2\mathrm{Re}\, \langle \tilde\varphi_N,\tilde\varphi_N'\rangle]\\
&\quad=L[\Phi,\tilde\Phi,\mathbf e,\tilde{\mathbf e}]+M[\Phi,\tilde\Phi,\mathbf e,\tilde{\mathbf e}],
\end{align*}
where
\begin{align*}
&L[\Phi,\tilde\Phi,\mathbf e,\tilde{\mathbf e}]:=[\mathcal L[\Phi,\tilde\Phi],\mathbf e,\tilde{\mathbf e}],\\
&M[\Phi,\tilde\Phi,\mathbf e,\tilde{\mathbf e}]\\
&\quad:=[\mathcal M[\Phi,\tilde\Phi]-[\mathbf e\Phi',\tilde{\mathbf e}\tilde\Phi'],-2\mathrm{Re}\, \langle \varphi_1,\varphi_1'\rangle-\epsilon_1,\dots,-2\mathrm{Re}\, \langle \varphi_N,\varphi_N'\rangle-\epsilon_N,\\
&\qquad-2\mathrm{Re}\, \langle \tilde\varphi_1,\tilde\varphi_1'\rangle-\tilde\epsilon_1,\dots,-2\mathrm{Re}\, \langle \tilde\varphi_N,\tilde\varphi_N'\rangle-\tilde\epsilon_N].\\
\end{align*}
Here $\mathbf e\Phi':={}^t(\epsilon_1\varphi_1',\dots,\epsilon_N\varphi_N')$. We can easily see that  $L$ is an isomorphism and $M$ is a compact operator, which completes the proof.
\end{proof}

\section{\L ojasiewicz inequality}\label{sixthsec}
The \L ojasiewcz inequality for functionals that satisfy a certain condition is crucial for the proof of the convergence of SCF sequences. Let us denote by $\lVert \cdot\rVert_X$ the norm in a Banach space $X$.

\begin{dfn}
Let $X$ and $Y$ be real Banach spaces and $O$ an open subset of $X$. The mapping $F:O\to Y$ is said to be real-analytic on $O$ if the following conditions are fulfilled:
\begin{itemize}
\item[(i)] For each $x\in O$ there exist Fr\'echet derivatives of arbitrary orders $d^mF(x,\dots)$.
\item[(ii)] For each $x\in O$ there exists $\delta>0$ such that for any $h\in X$ satisfying $\lVert h\rVert_X<\delta$ one has
\begin{equation*}
F(x+h)=\sum_{m=0}^{\infty}\frac{1}{m!}d^mF(x,h^m),
\end{equation*}
(the convergence being locally uniform and absolute), where $h^m := [h, \dots ,h]$ ($m$-times).
\end{itemize}
\end{dfn}

\begin{lem}\label{Lojasiewicz}
Let $Z$ be a real Hilbert space equipped with an inner product $\langle\langle\cdot,\cdot\rangle\rangle$ and the norm $\lVert \cdot\rVert_Z:=\langle\langle\cdot,\cdot\rangle\rangle^{1/2}$. Let $X$ be a dense subspace of $Z$ and assume that $X$ is a real Banach space with respect to another norm $\lVert \cdot\rVert_X$ such that $\lVert x\rVert_Z\leq\lVert x\rVert_X$ for any $x\in X$. Moreover, let $f(x)$ be a real-analytic functional in $X$ and $x^c$ a critical point of $f(x)$. Suppose that there exists a real-analytic mapping $F(x):X\to Z$ such that
\begin{itemize}
\item[(f1)] $df(x,y)=\langle\langle y,F(x)\rangle\rangle$ for any $x, y\in X$.
\item[(f2)] $F'(x^c)=L+M$, where $L$ is an isomorphism of $X$ onto $Z$ and $M$ is a compact operator.
\item[(f3)] $F'(x^c)$ is a selfadjoint operator with the domain $X$, when it is regarded as an operator in $Z$.
\end{itemize}
Then there exists a constant $\kappa>0$, $\theta\in(0,1/2]$ and a neighborhood $U(x^c)$ of $x^c$ such that
\begin{equation}\label{myeq6.0}
|f(x)-f(x^c)|^{1-\theta}\leq\kappa\lVert F(x)\rVert_{Z},
\end{equation}
for any $x\in U(x^c)$.
\end{lem}

For the proof of Lemma \ref{Lojasiewicz} we need the following real-analytic version of the implicit function theorem.

\begin{lem}[{\cite[Proposition 2.1]{FNSS} see also \cite[Lemma 3R]{FNSS2}}]\label{implicit}
Let $\mathcal X,\mathcal Y,\mathcal Z$ be real Banach spaces, $O\subset \mathcal X\times \mathcal Y$ an open set and $[x_0,y_0]\in O$. Let $F: O\to \mathcal Z$ be a real-analytic mapping such that $[F_y'(x_0,y_0)]^{-1}:\mathcal Z\to\mathcal Y$ exists and $F(x_0,y_0)=0$. Then there exist a neighborhood $U(x_0)$ in $\mathcal X$ of the point $x_0$ and a neighborhood $U(y_0)$ in $\mathcal Y$ of the point $y_0$  such that $U(x_0)\times U(y_0)\subset O$ and there exists one and only one mapping $y: U(x_0)\to U(y_0)$ for which $F(x,y(x))=0$ on $U(x_0)$. Moreover, $y$ is a real-analytic mapping on $U(x_0)$.
\end{lem}

\begin{proof}[Proof of Lemma \ref{Lojasiewicz}]
Due to the decomposition $F'(x^c)=L+M$, $F'(x^c)$ is a Fredholm operator (see e.g. \cite[Proof of Theorem 2.1]{As2}). Thus $X_1:=\mathrm{Ker}\, (F'(x^c))$ is finite-dimensional. Set $X_2:=X_1^{\perp}\cap X$, where $X_1^{\perp}$ is the orthogonal subspace of $X_1$ in $Z$. Then $X_1$ and $X_2$ are closed subspaces of $X$ and we have $X=X_1\bigoplus X_2$. In addition, $F'(x^c)$ is an isomorphism of $X_2$ onto a closed subspace $\tilde Z:=F'(x^c)(X_2)$ of $Z$. We write $x=[x_1,x_2],\ x_i \in X_i\ (i=1, 2)$ correspondingly to the decomposition $X=X_1\bigoplus X_2$. Let us denote the norm $\lVert\cdot\rVert_X$ restricted to $X_2$ by $\lVert \cdot\rVert_{X_2}$ and the norm $\lVert\cdot\rVert_Z$ restricted to $\tilde Z$ by $\lVert \cdot\rVert_{\tilde Z}$ with which $X_2$ and $\tilde Z$ are regarded as Banach spaces.

If $X_1=\{0\}$, then by the open mapping theorem $F'(x^c)^{-1}:\tilde Z\to X$ is continuous, and therefore, there exists a constant $\check C_1>0$ such that
\begin{equation}\label{myeq6.1.1}
\lVert\tilde x\rVert_X=\lVert F'(x^c)^{-1}F'(x^c)\tilde x\rVert_{X}\leq \check C_1\lVert F'(x^c)\tilde x\rVert_{\tilde Z}=\check C_1\lVert F'(x^c)\tilde x\rVert_{Z},
\end{equation}
for any $\tilde x\in X$.
Since by the definition of the Fr\'echet derivative and $F(x^c)=0$ we have $F(x)=F(x^c)+F'(x^c)(x-x^c)+o(\lVert x-x^c\rVert_X)=F'(x^c)(x-x^c)+o(\lVert x-x^c\rVert_X)$, using \eqref{myeq6.1.1} we can see that there exists a neighborhood $\hat U(x^c)$ of $x^c$ and constants $0<\tau<\check C_1^{-1}$, $\check C_2>0$ such that
\begin{equation}\label{myeq6.2}
\lVert F(x)\rVert_Z\geq \check C_1^{-1}\lVert x-x^c\rVert_X-\tau\lVert x-x^c\rVert_X\geq \check C_2\lVert x-x^c\rVert_X,
\end{equation}
for any $x\in \hat U(x^c)$. On the other hand since $x^c$ is a critical point of $f(x)$, by the Taylor formula (see e.g. \cite[Theorem 4.A]{Ze}) we have
$$f(x)=f(x^c)+\int_0^1(1-t)d^2f(x^c+t(x-x^c),(x-x^c)^2)dt.$$
Hence we can see that there exists a constant $\check C_3>0$ such that 
\begin{equation}\label{myeq6.1}
|f(x)-f(x^c)|\leq \check C_3\lVert x-x^c\rVert^2_X,
\end{equation}
for any $x\in \hat U(x^c)$. From \eqref{myeq6.1} and \eqref{myeq6.2} it is seen that \eqref{myeq6.0} holds with $U(x^c)=\hat U(x^c)$, $\kappa=\check C_2^{-1}\check C_3^{1/2}$ and $\theta=1/2$ if $X_1=\{0\}$.

If $X_1\neq \{0\}$, applying Lemma \ref{implicit} to $P_{\tilde Z}\circ F$ near $x^c$ with $\mathcal X=X_1$, $\mathcal Y=X_2$ and $\mathcal Z=\tilde Z$ it follows that there exists neighborhoods $\hat U(x^c_1)$ of $x^c_1$, $\hat U(x^c_2)$ of $x^c_2$ and a real-analytic mapping $\omega:\hat U(x^c_1)\to \hat U(x^c_2)$ such that $x=[x_1,x_2]\in \hat U(x^c_1)\times \hat U(x^c_2)$ satisfies $(P_{\tilde Z}\circ F)(x)=P_{\tilde Z}F(x)=0$ if and only if $x_2=\omega(x_1)$, where $P_{\tilde Z}$ is the orthogonal projection from $Z$ onto $\tilde Z$. Moreover, since $F'(x^c)$ is selfadjoint, we have $X_1=\mathrm{Ker}\, (F'(x^c))=\mathrm{Ker}\, (F'(x^c)^*)=\tilde Z^{\perp}$, where $\tilde Z^{\perp}$ is the orthogonal subspace of $\tilde Z$ in $Z$. Let $\nu:=\mathrm{dim}\, X_1$ and $\{v_1,\dots,v_{\nu}\}$ be a basis of $X_1$. Set $\mathbf v:=(v_1,\dots,v_{\nu})$. We write $\mathbf t\cdot \mathbf v:=\sum_{j=1}^{\nu}t_jv_j$ for $\mathbf t=(t_1,\dots,t_{\nu})\in\mathbb R^{\nu}$. Then any  $x_1\in\hat U(x_1^c)$ is expressed as $x_1=x^c_1+\mathbf t \cdot \mathbf v$, and $f(x_1^c+\mathbf t\cdot\mathbf v,\omega(x_1^c+\mathbf t\cdot\mathbf v))$ is a real-analytic function of $\mathbf t\in\mathbb R^{\nu}$ since $f$ and $\omega$ are real-analytic. Thus applying the \L ojasiewicz inequality in finite-dimensional space we can see that there exist constants $\kappa_1,\kappa_2>0$, $\theta\in (0,1/2]$ and a neighborhood $\check U(x_1^c)$ of $x_1^c$ such that for any $x_1\in \check U(x_1^c)$
\begin{equation}\label{myeq6.3}
\begin{split}
&|f(x_1,\omega(x_1))-f(x_1^c,x_2^c)|^{1-\theta}\\
&\quad=|f(x_1,\omega(x_1))-f(x_1^c,\omega(x_1^c))|^{1-\theta}\\
&\quad=|f(x_1^c+\mathbf t\cdot \mathbf v,\omega(x_1^c+\mathbf t\cdot \mathbf v))-f(x_1^c,\omega(x_1^c))|^{1-\theta}\\
&\quad=\kappa_1\sum_{j=1}^{\nu}\big|\langle\langle v_j,F(x_1^c+\mathbf t\cdot \mathbf v,\omega(x_1^c+\mathbf t\cdot \mathbf v))\rangle\rangle\\
&\qquad+\langle\langle \omega'(x_1^c+\mathbf t\cdot\mathbf v)[v_j],F(x_1^c+\mathbf t\cdot \mathbf v,\omega(x_1^c+\mathbf t\cdot \mathbf v))\rangle\rangle\big|\\
&\quad\leq\kappa_2\lVert F(x_1^c+\mathbf t\cdot \mathbf v,\omega(x_1^c+\mathbf t\cdot \mathbf v))\rVert_{Z}\\
&\quad=\kappa_2\lVert F(x_1,\omega(x_1))\rVert_Z,
\end{split}
\end{equation}
where we used that $\lVert\omega'(x_1^c+\mathbf t\cdot\mathbf v)[v_j]\rVert_{Z}\leq\lVert\omega'(x_1^c+\mathbf t\cdot\mathbf v)[v_j]\rVert_{X}\leq C$ for a constant $C>0$ independent of $\mathbf t$ such that $x_1^c+\mathbf t\cdot \mathbf v\in \check U(x_1^c)$.

By the open mapping theorem $[P_{\tilde Z}F'_{x_2}(x_1^c,x_2^c)]^{-1}:\tilde Z\to X_2$ is continuous (note that $(P_{\tilde Z}\circ F)'(x)=P_{\tilde Z}F'(x)$). Thus there exists a constant $\check C_4>0$ such that
\begin{equation}\label{myeq6.3.1}
\begin{split}
\lVert \tilde x_2\rVert_{X_2}&=\lVert [P_{\tilde Z}F'_{x_2}(x_1^c,x_2^c)]^{-1}P_{\tilde Z}F'_{x_2}(x_1^c,x_2^c)\tilde x_2\rVert_{X_2}\\
&\leq \check C_4\lVert P_{\tilde Z}F'_{x_2}(x_1^c,x_2^c)\tilde x_2\rVert_{\tilde Z}=\check C_4\lVert P_{\tilde Z}F'_{x_2}(x_1^c,x_2^c)\tilde x_2\rVert_{Z},
\end{split}
\end{equation}
for any $\tilde x_2\in X_2$.
Since $F(x)$ is a real-analytic mapping, choosing $\hat U(x^c_1)$ and $\hat U(x^c_2)$ small enough and using \eqref{myeq6.3.1} we can see that there exists $\check C_5>0$ such that
\begin{equation}\label{myeq6.4}
\begin{split}
&\lVert P_{\tilde Z}F'_{x_2}(x_1,\omega(x_1))\tilde x_2\rVert_Z\\
&\quad\geq\lVert P_{\tilde Z}F'_{x_2}(x_1^c,x_2^c)\tilde x_2\rVert_Z-\lVert (P_{\tilde Z}F'_{x_2}(x_1,\omega(x_1))-P_{\tilde Z}F'_{x_2}(x_1^c,x_2^c))\tilde x_2\rVert_Z\\
&\quad\geq \check C_5\lVert\tilde x_2\rVert_{X_2},
\end{split}
\end{equation}
for any $x_1\in \hat U(x_1^c)$ and $\tilde x_2\in X_2$. Moreover, by the definition of the Fr\'echet derivative and $P_{\tilde Z}F(x_1,\omega(x_1))=0$ we have
\begin{equation}\label{myeq6.5}
\begin{split}
&P_{\tilde Z}F(x_1,x_2)\\
&\quad=P_{\tilde Z}F(x_1,\omega(x_1))+P_{\tilde Z}F'_{x_2}(x_1,\omega(x_1))(x_2-\omega(x_1))+o(\lVert x_2-\omega(x_1)\rVert_{X_2})\\
&\quad=P_{\tilde Z}F'_{x_2}(x_1,\omega(x_1))(x_2-\omega(x_1))+o(\lVert x_2-\omega(x_1)\rVert_{X_2}),
\end{split}
\end{equation}
for any $x_1\in \hat U(x_1^c)$ and $x_2\in \hat U(x_2^c)$. It follows from \eqref{myeq6.4} and \eqref{myeq6.5} that choosing smaller $\hat U(x_1^c)$ and $\hat U(x_2^c)$ further there exists $\check C_6>0$ such that
\begin{equation}\label{myeq6.6}
\lVert P_{\tilde Z}F(x_1,x_2)\rVert_Z\geq \check C_6\lVert x_2-\omega(x_1)\rVert_{X_2},
\end{equation}
for any  $x_1\in \hat U(x_1^c)$ and $x_2\in \hat U(x_2^c)$. Moreover, using \eqref{myeq6.6} we can see that there exists $\check C_7>0$ such that
\begin{equation}\label{myeq6.6.1}
\begin{split}
\lVert F(x_1,\omega(x_1))\rVert_Z&\leq \lVert F(x_1,x_2)\rVert_Z+ \lVert F(x_1,x_2)-F(x_1,\omega(x_1))\rVert_Z\\
&\leq\lVert F(x_1,x_2)\rVert_Z+\check C_7\lVert x_2-\omega(x_1)\rVert_{X_2}\\
&\leq (1+\check C_7\check C_6^{-1})\lVert F(x_1,x_2)\rVert_Z.
\end{split}
\end{equation}

On the other hand, since $P_{\tilde Z}F(x_1,\omega(x_1))=0$, we have
\begin{equation*}
\begin{split}
&f(x_1,x_2)-f(x_1,\omega(x_1))\\
&\quad=\langle\langle x_2-\omega(x_1),P_{\tilde Z^{\perp}}F(x_1,\omega(x_1))\rangle\rangle+O(\lVert x_2-\omega(x_1)\rVert_{X_2}^2),
\end{split}
\end{equation*}
where $P_{\tilde Z^{\perp}}$ is the orthogonal projection onto $\tilde Z^{\perp}$. Since $\tilde Z^{\perp}=X_1$ and $X_1\perp X_2$ with respect to the inner product $\langle\langle\cdot,\cdot\rangle\rangle$ in $Z$, the first term in the right-hand side vanishes.
Thus there exists a constant $\check C_8>0$ such that
\begin{equation}\label{myeq6.7}
|f(x_1,x_2)-f(x_1,\omega(x_1))|\leq \check C_8\lVert x_2-\omega(x_1)\rVert_{X_2}^2,
\end{equation}
for any $x_1\in \hat U(x_1^c)$ and $x_2\in \hat U(x_2^c)$. Combining \eqref{myeq6.6} and \eqref{myeq6.7} we obtain
\begin{equation*}
|f(x_1,x_2)-f(x_1,\omega(x_1))|^{1/2}\leq \check C_6^{-1}\check C_8^{1/2}\lVert F(x_1,x_2)\rVert_Z.
\end{equation*}

It follows from \eqref{myeq6.3}, \eqref{myeq6.6.1} and \eqref{myeq6.7} that for $x\in U(x^c):=(\check U(x_1^c)\cap\hat U(x_1^c))\times \hat U(x_2^c)$ we have
\begin{align*}
&|f(x_1,x_2)-f(x^c)|^{1-\theta}\\
&\quad=|f(x_1,x_2)-f(x_1,\omega(x_1))+f(x_1,\omega(x_1))-f(x^c)|^{1-\theta}\\
&\quad\leq 2^{1-\theta}(|f(x_1,x_2)-f(x_1,\omega(x_1))|^{1-\theta}+|f(x_1,\omega(x_1))-f(x^c)|^{1-\theta})\\
&\quad\leq 2^{1-\theta}(|f(x_1,x_2)-f(x_1,\omega(x_1))|^{1/2}+|f(x_1,\omega(x_1))-f(x^c)|^{1-\theta})\\
&\quad\leq 2^{1-\theta}(\check C_6^{-1}\check C_8^{1/2}+\kappa_2(1+\check C_7\check C_6^{-1}))\lVert F(x_1,x_2)\rVert_Z,
\end{align*}
where in the third step we assume $|f(x_1,x_2)-f(x_1,\omega(x_1))|<1$, which holds if we choose sufficiently small $U(x^c)$.
Thus \eqref{myeq6.0} holds with $\kappa=2^{1-\theta}(\check C_6^{-1}\check C_8^{1/2}+\kappa_2(1+\check C_7\check C_6^{-1}))$, which completes the proof.
\end{proof}

\section{Proof of the main theorems}\label{seventhsec}
For the proof of Theorem \ref{main} the following lemma about convergence of positive term series is needed.
\begin{lem}\label{series}
Let $(\alpha_1,\alpha_2,\dots)$ be a sequence of real numbers such that $\alpha_k>0$ for any $k\geq 1$ and $\sum_{k=1}^{\infty}\frac{\alpha_{k+1}^2}{\alpha_k}$ converges. Then $\sum_{k=1}^{\infty}\alpha_k$ converges.
\end{lem}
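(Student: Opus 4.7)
The plan is to exploit a Cauchy--Schwarz bound that relates the partial sums of $\{\alpha_k\}$ to the partial sums of $\{\alpha_{k+1}^2/\alpha_k\}$, and then to conclude boundedness of the former via a quadratic inequality.

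First I would write $\alpha_{k+1} = (\alpha_{k+1}/\sqrt{\alpha_k}) \cdot \sqrt{\alpha_k}$ and apply the Cauchy--Schwarz inequality to get
\begin{equation*}
\sum_{k=1}^{N}\alpha_{k+1} \leq \left(\sum_{k=1}^{N}\frac{\alpha_{k+1}^{2}}{\alpha_{k}}\right)^{1/2}\left(\sum_{k=1}^{N}\alpha_{k}\right)^{1/2}
\end{equation*}
for every $N\geq 1$. Set $S_{N}:=\sum_{k=1}^{N}\alpha_{k}$ and let $T:=\sum_{k=1}^{\infty}\alpha_{k+1}^{2}/\alpha_{k}<\infty$, which is finite by hypothesis. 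Since the left-hand side equals $S_{N+1}-\alpha_{1}$ and the first factor on the right is bounded by $T^{1/2}$, one obtains
\begin{equation*}
S_{N+1}-\alpha_{1}\leq T^{1/2}\, S_{N+1}^{1/2},
\end{equation*}
where I used $S_N \leq S_{N+1}$.

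Next, setting $u_{N}:=S_{N+1}^{1/2}\geq 0$, this reads $u_{N}^{2}-T^{1/2}u_{N}-\alpha_{1}\leq 0$, so $u_N$ lies below the positive root of the corresponding quadratic, giving
\begin{equation*}
S_{N+1}\leq \left(\frac{T^{1/2}+\sqrt{T+4\alpha_{1}}}{2}\right)^{2}.
\end{equation*}
This is a uniform bound in $N$, and since $\{S_{N}\}$ is nondecreasing (as $\alpha_k>0$) it therefore converges, proving the claim.

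There is no serious obstacle here; the only care needed is to estimate $S_{N}$ on the right by $S_{N+1}$ so that the quadratic inequality is closed in a single variable, and to use the full sum $T$ rather than the partial sum on the right to avoid having to track $N$ on two sides of the inequality simultaneously.
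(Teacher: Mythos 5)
Your proposal is correct and follows essentially the same route as the paper: both apply Cauchy--Schwarz to $\sum \alpha_{k+1} = \sum (\alpha_{k+1}/\sqrt{\alpha_k})\sqrt{\alpha_k}$ and then close the resulting quadratic inequality in $\sqrt{S_N}$ to get a uniform bound on the partial sums. The only cosmetic difference is that you finish by solving the quadratic explicitly via the quadratic formula, whereas the paper divides through by $\left(\sum_{k\leq k_0}\alpha_k\right)^{1/2}$ and bounds the leftover term $\alpha_1\left(\sum_{k\leq k_0}\alpha_k\right)^{-1/2}\leq\alpha_1^{1/2}$; these are interchangeable ways of extracting the same bound.
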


\begin{proof}
Let $k_0\in\mathbb N$ be a fixed number. Then by the Cauchy-Schwarz inequality we have
$$\sum_{k=1}^{k_0}\alpha_{k+1}=\sum_{k=1}^{k_0}\frac{\alpha_{k+1}}{\alpha_k^{1/2}}\alpha_k^{1/2}\leq\left(\sum_{k=1}^{k_0}\frac{\alpha_{k+1}^2}{\alpha_k}\right)^{1/2}\left(\sum_{k=1}^{k_0}\alpha_k\right)^{1/2}.$$
Hence we have
$$\sum_{k=1}^{k_0}\alpha_{k}\leq \alpha_1+\sum_{k=1}^{k_0}\alpha_{k+1}\leq \alpha_1+\left(\sum_{k=1}^{k_0}\frac{\alpha_{k+1}^2}{\alpha_k}\right)^{1/2}\left(\sum_{k=1}^{k_0}\alpha_k\right)^{1/2}.$$
Dividing both sides by $\left(\sum_{k=1}^{k_0}\alpha_k\right)^{1/2}$ we obtain
$$\left(\sum_{k=1}^{k_0}\alpha_k\right)^{1/2}\leq \alpha_1\left(\sum_{k=1}^{k_0}\alpha_k\right)^{-1/2}+\left(\sum_{k=1}^{k_0}\frac{\alpha_{k+1}^2}{\alpha_k}\right)^{1/2}\leq \alpha_1^{1/2}+\left(\sum_{k=1}^{k_0}\frac{\alpha_{k+1}^2}{\alpha_k}\right)^{1/2}.$$
Since $\sum_{k=1}^{\infty}\frac{\alpha_{k+1}^2}{\alpha_k}$ converges, the right-hand side is bounded by a constant $C>0$ independent of $k_0$, and therefore, we have
$$\sum_{k=1}^{k_0}\alpha_k\leq C^2.$$
Since $k_0$ was arbitrary, this implies that $\sum_{k=1}^{\infty}\alpha_k$ is convergent and
$$\sum_{k=1}^{\infty}\alpha_k\leq C^2,$$
which completes the proof.
\end{proof}

We also need a bound of the $H^2$ norm of differences of solutions to a sequence of equations by the $L^2$ norm.
\begin{lem}\label{reduction}
Let $\zeta>0$ be a constant and $\Xi^k={}^t(\xi_1^k,\dots,\xi_N^k)\in\mathcal W,\ k=0,1,\dots$ be a sequence satisfying
$$\mathcal F(\Xi^{k-1})\xi_i^{k}=\sum_{j=1}^N\epsilon_{ij}^k\xi_j^k,$$
with some constants $\epsilon^k_{ij},\ 1\leq i,j\leq N$ such that $|\epsilon^k_{ij}|\leq \zeta$, $1\leq i,j\leq N$ for any $k\geq 1$. Then there exists a constant $\beta_{\zeta}>0$ independent of $k$ such that
\begin{align*}
&\lVert \Xi^{k+1}-\Xi^{k-1}\rVert_{\bigoplus_{i=1}^NH^2(\mathbb R^3)}\\
&\quad\leq \beta_{\zeta}(\lVert \Xi^{k}-\Xi^{k-2}\rVert_{\bigoplus_{i=1}^NL^2(\mathbb R^3)}+\lVert \Xi^{k+1}-\Xi^{k-1}\rVert_{\bigoplus_{i=1}^NL^2(\mathbb R^3)}),
\end{align*}
for any $k\geq 2$.
\end{lem}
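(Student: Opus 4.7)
The plan is to subtract the defining equations for $\xi_i^{k+1}$ and $\xi_i^{k-1}$ to isolate $h(\xi_i^{k+1}-\xi_i^{k-1})$, bound the result in $L^2$ by the quantity on the right-hand side of the claim, and then upgrade to the $H^2$ estimate using that $V$ is $\Delta$-bounded with relative bound strictly less than one, so that $\lVert\Delta u\rVert\le C(\lVert hu\rVert+\lVert u\rVert)$. Concretely, writing $\mathcal F=h+\mathcal G$ and subtracting
\[
h\xi_i^{k+1}+\mathcal G(\Xi^k)\xi_i^{k+1}=\sum_{j=1}^N\epsilon_{ij}^{k+1}\xi_j^{k+1},\qquad h\xi_i^{k-1}+\mathcal G(\Xi^{k-2})\xi_i^{k-1}=\sum_{j=1}^N\epsilon_{ij}^{k-1}\xi_j^{k-1},
\]
it suffices to bound in $L^2$ the two quantities $\mathcal G(\Xi^k)\xi_i^{k+1}-\mathcal G(\Xi^{k-2})\xi_i^{k-1}$ and $\sum_j(\epsilon_{ij}^{k+1}\xi_j^{k+1}-\epsilon_{ij}^{k-1}\xi_j^{k-1})$ by a constant multiple of $\lVert\Xi^k-\Xi^{k-2}\rVert_{\bigoplus_{i=1}^N L^2(\mathbb R^3)}+\lVert\Xi^{k+1}-\Xi^{k-1}\rVert_{\bigoplus_{i=1}^N L^2(\mathbb R^3)}$.

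First I would note that, by the Hardy inequality argument already used in Lemma \ref{Hbound}, the assumption $|\epsilon_{ij}^k|\le\zeta$ together with the identity $\langle \xi_i^k,\mathcal F(\Xi^{k-1})\xi_i^k\rangle=\epsilon_{ii}^k$ yields a uniform $H^1$ bound $\lVert\nabla \xi_i^k\rVert\le \hat C_\zeta$ independent of $k$. With this in hand, the $\mathcal G$-difference splits telescopically as
\[
\mathcal G(\Xi^k)\xi_i^{k+1}-\mathcal G(\Xi^{k-2})\xi_i^{k-1}=\bigl(\mathcal G(\Xi^k)-\mathcal G(\Xi^{k-2})\bigr)\xi_i^{k+1}+\mathcal G(\Xi^{k-2})(\xi_i^{k+1}-\xi_i^{k-1}),
\]
the first summand being controlled by $C\lVert\Xi^k-\Xi^{k-2}\rVert_{\bigoplus_{i=1}^N L^2(\mathbb R^3)}$ via the Hardy-type estimate used in Step~3 of the proof of Lemma \ref{app}, and the second by a uniform $L^2\to L^2$ boundedness of $\mathcal G(\Xi^{k-2})$ (the multiplier $R^{\Xi^{k-2}}$ is uniformly in $L^\infty$ and $S^{\Xi^{k-2}}$ uniformly Hilbert--Schmidt, both consequences of the $H^1$ bound).

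The main obstacle is the $\epsilon$-term, since the hypothesis bounds $|\epsilon_{ij}^k|$ but not its successive differences. Using the orthonormality identity $\epsilon_{ij}^k=\langle\xi_j^k,\mathcal F(\Xi^{k-1})\xi_i^k\rangle$ and inserting the intermediate quantity $\langle \xi_j^{k-1},\mathcal F(\Xi^k)\xi_i^{k+1}\rangle$, one obtains
\[
\epsilon_{ij}^{k+1}-\epsilon_{ij}^{k-1}=\langle \xi_j^{k+1}-\xi_j^{k-1},\mathcal F(\Xi^k)\xi_i^{k+1}\rangle+\langle \xi_j^{k-1},(\mathcal G(\Xi^k)-\mathcal G(\Xi^{k-2}))\xi_i^{k+1}\rangle+\langle \mathcal F(\Xi^{k-2})\xi_j^{k-1},\xi_i^{k+1}-\xi_i^{k-1}\rangle.
\]
Substituting $\mathcal F(\Xi^k)\xi_i^{k+1}=\sum_l\epsilon_{il}^{k+1}\xi_l^{k+1}$ and $\mathcal F(\Xi^{k-2})\xi_j^{k-1}=\sum_l\epsilon_{jl}^{k-1}\xi_l^{k-1}$ in the first and third terms (whose inner products are then bounded by $N\zeta$ times $\lVert\xi_j^{k+1}-\xi_j^{k-1}\rVert$ or $\lVert\xi_i^{k+1}-\xi_i^{k-1}\rVert$) and applying the $\mathcal G$-difference estimate in the middle yields $|\epsilon_{ij}^{k+1}-\epsilon_{ij}^{k-1}|\le C(\lVert\Xi^{k+1}-\Xi^{k-1}\rVert_{\bigoplus_{i=1}^N L^2(\mathbb R^3)}+\lVert\Xi^k-\Xi^{k-2}\rVert_{\bigoplus_{i=1}^N L^2(\mathbb R^3)})$. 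Finally, the decomposition
\[
\sum_{j=1}^N(\epsilon_{ij}^{k+1}\xi_j^{k+1}-\epsilon_{ij}^{k-1}\xi_j^{k-1})=\sum_{j=1}^N\epsilon_{ij}^{k+1}(\xi_j^{k+1}-\xi_j^{k-1})+\sum_{j=1}^N(\epsilon_{ij}^{k+1}-\epsilon_{ij}^{k-1})\xi_j^{k-1}
\]
reduces the $\epsilon$-term to the same bound; collecting everything and applying the Kato--Rellich inequality gives the required $H^2$ estimate with $\beta_\zeta$ depending only on $\zeta$, $N$, and the nuclear charges $\{Z_l\}$.
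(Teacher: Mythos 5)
Your proposal is correct and follows essentially the same line of argument as the paper's proof: derive a uniform $H^1$ bound via Lemma~\ref{Hbound}, subtract the two equations to control $\lVert h(\xi_i^{k+1}-\xi_i^{k-1})\rVert$ through a telescoping estimate on the $\mathcal G$-difference and an inner-product estimate for $\epsilon_{ij}^{k+1}-\epsilon_{ij}^{k-1}$, and then upgrade to $H^2$ via the $h$-boundedness of $\Delta$. Your treatment of the $\epsilon$-difference is simply a more explicit writing-out of the intermediate-term insertion that the paper compresses into its equation \eqref{myeq7.0.2}.
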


\begin{proof}
First note that by the same proof as that of Lemma \ref{Hbound} we can see that there exists a constant $\tilde C_{\zeta}'>0$ such that 
\begin{equation}\label{myeq7.0.0}
\lVert\nabla\xi_i^k\rVert\leq\tilde C_{\zeta}',\ 1\leq i\leq N,
\end{equation}
for any $k\geq 0$. It follows from the equations
\begin{equation}\label{myeq7.0}
\begin{split}
\mathcal F(\Xi^{k-2})\xi_i^{k-1}&=\sum_{j=1}^N\epsilon_{ij}^{k-1}\xi_j^{k-1},\\
\mathcal F(\Xi^{k})\xi_i^{k+1}&=\sum_{j=1}^N\epsilon_{ij}^{k+1}\xi_j^{k+1},
\end{split}
\end{equation}
the Hardy inequality and \eqref{myeq7.0.0} that there exists a constant $\tilde\beta_{\zeta}>0$ independent of $k$ such that
\begin{equation}\label{myeq7.0.1}
\begin{split}
&\lVert h(\xi_i^{k+1}-\xi_i^{k-1})\rVert\\
&\quad=\lVert\mathcal G(\Xi^{k})\xi_i^{k+1}-\sum_{j=1}^N\epsilon_{ij}^{k+1}\xi_j^{k+1}-\mathcal G(\Xi^{k-2})\xi_i^{k-1}-\sum_{j=1}^N\epsilon_{ij}^{k-1}\xi_j^{k-1}\rVert\\
&\quad\leq\tilde\beta_{\zeta}(\lVert \Xi^{k}-\Xi^{k-2}\rVert_{\bigoplus_{i=1}^NL^2(\mathbb R^3)}+\lVert \Xi^{k+1}-\Xi^{k-1}\rVert_{\bigoplus_{i=1}^NL^2(\mathbb R^3)}\\
&\qquad+\sum_{j=1}^N|\epsilon_{ij}^{k+1}-\epsilon_{ij}^{k-1}|).
\end{split}
\end{equation}
By \eqref{myeq7.0} we have
\begin{align*}
\epsilon_{ij}^{k-1}&=\langle\xi_j^{k-1},\mathcal F(\Xi^{k-2})\xi_i^{k-1}\rangle,\\
\epsilon_{ij}^{k+1}&=\langle\xi_j^{k+1},\mathcal F(\Xi^{k})\xi_i^{k+1}\rangle.
\end{align*}
Thus by the Hardy inequality and \eqref{myeq7.0.0} there exists a constant $\hat\beta_{\zeta}>0$ independent of $k$ such that
\begin{equation}\label{myeq7.0.2}
|\epsilon_{ij}^{k+1}-\epsilon_{ij}^{k-1}|\leq \hat\beta_{\zeta}(\lVert \Xi^{k}-\Xi^{k-2}\rVert_{\bigoplus_{i=1}^NL^2(\mathbb R^3)}+\lVert\xi_i^{k+1}-\xi_i^{k-1}\rVert+\lVert\xi_j^{k+1}-\xi_j^{k-1}\rVert).
\end{equation}
Since $\Delta$ is $h$-bounded, the result immediately follows from \eqref{myeq7.0.1} and \eqref{myeq7.0.2}.
\end{proof}

\begin{proof}[Proof of the Theorem \ref{main}]
\noindent\textbf{Step 1.} First note that if $\lVert D_{\Phi^{k+1}}-D_{\Phi^{k-1}}\rVert_{2}=0$ for some $k$, then $\mathcal F(\Phi^{k+1})=\mathcal F(\Phi^{k-1})$, and thus $D_{\Phi^{k+2}}=D_{\Phi^{k}}$. Therefore, by induction we have $D_{\Phi^{s}}=D_{\Phi^{s+2}}$ for any $s\geq k$. Then since $\Phi^{k+2t},\ t=0,1,\dots$ (resp., $\Phi^{k+2t+1},\ t=0,1,\dots$) are tuples of the eigenfunctions corresponding to the same eigenvalues of $\mathcal F(\Phi^{k-1})$ (resp., $\mathcal F(\Phi^{k})$), there exist unitary matrices $A_{k+2t}$ (resp., $A_{k+2t+1}$) such that $\lVert A_{k+2t}\Phi^{k+2t}-\Phi^k\rVert=0$ (resp., $\lVert A_{k+2t+1}\Phi^{k+2t+1}-\Phi^{k+1}\rVert=0$) for $t\geq 0$. Hence the results in Theorem \ref{main} are obvious in this case. Therefore, hereafter we assume $\lVert D_{\Phi^{k+1}}-D_{\Phi^{k-1}}\rVert_{2}>0$ for any $k\geq 1$. As in Section \ref{thirdsec}, ${\mathcal E}(\Phi^k,\Phi^{k+1})$ is decreasing with respect to $k$ and converges to some $\mu\in\mathbb R$. If $\mathcal E(\Phi^k,\Phi^{k+1})=\mu$ for some $k$, then we have $\mu=\mathcal E(\Phi^k,\Phi^{k+1})\geq\mathcal E(\Phi^{k+1},\Phi^{k+2})\geq\mu$, and therefore, $\mathcal E(\Phi^k,\Phi^{k+1})=\mathcal E(\Phi^{k+1},\Phi^{k+2})$. Recalling that by Lemma \ref{wpbound} we have
\begin{equation}\label{myeq7.1.1}
{\mathcal E}(\Phi^k,\Phi^{k+1})-{\mathcal E}(\Phi^{k+1},\Phi^{k+2})\geq 2^{-1}\gamma\lVert D_{\Phi^{k+2}}-D_{\Phi^k}\rVert_{2}^2,
\end{equation}
we obtain $\lVert D_{\Phi^{k+2}}-D_{\Phi^{k}}\rVert_{2}=0$, which contradicts the assumption above. Thus we may also assume $\mathcal E(\Phi^k,\Phi^{k+1})>\mu$ for any $k\geq 0$.

We can easily see that for any constants $p\geq q>0$ and $\tilde\theta\in (0,1/2]$ we have
$$p^{\tilde\theta}-q^{\tilde\theta}\geq \frac{\tilde\theta}{p^{1-\tilde\theta}}(p-q).$$
Applying this inequality to $p={\mathcal E}(\Phi^k,\Phi^{k+1})-\mu$, $q={\mathcal E}(\Phi^{k+1},\Phi^{k+2})-\mu$ we obtain
\begin{equation}\label{myeq7.1}
\begin{split}
&({\mathcal E}(\Phi^k,\Phi^{k+1})-\mu)^{\tilde\theta}-({\mathcal E}(\Phi^{k+1},\Phi^{k+2})-\mu)^{\tilde\theta}\\
&\quad\geq \frac{\tilde\theta}{({\mathcal E}(\Phi^k,\Phi^{k+1})-\mu)^{1-\tilde\theta}}({\mathcal E}(\Phi^k,\Phi^{k+1})-{\mathcal E}(\Phi^{k+1},\Phi^{k+2})).
\end{split}
\end{equation}
The factor $({\mathcal E}(\Phi^k,\Phi^{k+1})-{\mathcal E}(\Phi^{k+1},\Phi^{k+2}))$ in the right-hand side is estimated from below by \eqref{myeq7.1.1}.

\noindent\textbf{Step 2.} As for the denominator in the right-hand side of \eqref{myeq7.1}, recalling $\lVert\varphi_i^k\rVert,\lVert\varphi_i^{k+1}\rVert\newline=1$, $1\leq i\leq N$ we can see that
\begin{equation}\label{myeq7.2}
{\mathcal E}(\Phi^k,\Phi^{k+1})=f(\Phi^k,\Phi^{k+1},\mathbf e^k,\mathbf e^{k+1}),
\end{equation}
where $f$ is the functional defined by \eqref{myeq5.0}. 
Set
$$\tilde\Gamma_{\gamma,\mu}:=\{[\Phi,\tilde\Phi,\mathbf e,\tilde{\mathbf e}]: [\Phi,\tilde\Phi]\in \Gamma_{\gamma,\mu},\ \epsilon_i:=\langle\varphi_i,\mathcal F(\tilde\Phi)\varphi_i\rangle,\ \tilde\epsilon_i:=\langle\tilde\varphi_i,\mathcal F(\Phi)\tilde\varphi_i\rangle\},$$
and let $\tilde d$ be the distance function in $(\bigoplus_{i=1}^NH^2(\mathbb R^3))\bigoplus(\bigoplus_{i=1}^NH^2(\mathbb R^3))\bigoplus \mathbb R^N\bigoplus \mathbb R^N$. As in the proof of Lemma \ref{app} using Lemmas \ref{wpbound} and \ref{Ubound} we can show
\begin{equation}\label{myeq7.2.0}
\lim_{k\to\infty}|\langle\varphi_i^k,\mathcal F(\Phi^{k+1})\varphi_i^k\rangle-\langle\varphi_i^k,\mathcal F(\Phi^{k-1})\varphi_i^k\rangle|=0.
\end{equation}
Using Lemma \ref{app}, $\epsilon_i^k=\langle\varphi_i^k,\mathcal F(\Phi^{k-1})\varphi_i^k\rangle$ and \eqref{myeq7.2.0} we can see that
\begin{equation}\label{myeq7.2.1}
\lim_{k\to\infty}\tilde d([\Phi^k,\Phi^{k+1},\mathbf e^k,\mathbf e^{k+1}],\tilde\Gamma_{\gamma,\mu})=0.
\end{equation}

By Lemma \ref{Fredholm} for any $[\Phi',\tilde\Phi',\mathbf e',\tilde{\mathbf e}']\in\tilde \Gamma_{\gamma,\mu}$, $F'(\Phi',\tilde\Phi',\mathbf e',\tilde{\mathbf e}')$ is decomposed into a sum of an isomorphism $L$ and a compact operator $M$. Moreover, extending the domain of $\langle\langle\cdot,\cdot\rangle\rangle$ from $Y_1\times Y_2$ to $Y_2\times Y_2$ in the obvious way, $Y_2$ can be regarded as a real Hilbert space equipped with the inner product $\langle\langle\cdot,\cdot\rangle\rangle$. Since the Fr\'echet derivative is symmetric \cite[Problem 4.3]{Ze}, we have $d^2f(z_1,[z_2,z_3])=d^2f(z_1,[z_3,z_2])=\langle\langle z_2,F'(z_1)z_3\rangle\rangle=\langle\langle z_3,F'(z_1)z_2\rangle\rangle=\langle\langle F'(z_1)z_2,z_3\rangle\rangle$ for any $z_1,z_2,z_3\in Y_1$. Thus $F'(z_1)$ is a symmetric operator with the domain $Y_1\subset Y_2$. The operator $F'(z_1)$ is a sum of $\tilde{\mathcal H}:=\mathrm{diag}\, [h,\dots,h,0,\dots,0]$ ($h$ appears $2N$ times) and a bounded operator, and $\tilde{\mathcal H}$ is a selfadjoint operator with the domain $Y_1$ in the real Hilbert space $Y_2$ equipped with the inner product $\langle\langle\cdot,\cdot\rangle\rangle$, which follows from that $h$ is a selfadjoint operator with the domain $H^2(\mathbb R^3)$ in $L^2(\mathbb R^3)$ equipped with the usual inner product. Thus $F'(z_1)$ is also a selfadjoint operator with the domain $Y_1$. It is also easily seen that $f$ and $F$ are real-analytic. Therefore, we can apply Lemma \ref{Lojasiewicz} to $f$ and see that there exist a neighborhood $U$ of $[\Phi',\tilde\Phi',\mathbf e',\tilde{\mathbf e}']$ and constants $\kappa>0$ and $\theta\in(0,1/2]$ such that
$$|f(\Phi,\tilde\Phi,\mathbf e,\tilde{\mathbf e})-\mu|^{1-\theta}\leq \kappa\lVert F(\Phi,\tilde\Phi,\mathbf e,\tilde{\mathbf e})\rVert_{Y_2},$$
for any $[\Phi,\tilde\Phi,\mathbf e,\tilde{\mathbf e}]\in U$. Because by Lemma \ref{compact} $\Gamma_{\gamma,\mu}$ and therefore, $\tilde\Gamma_{\gamma,\mu}$ are compact, we can choose a finite cover of $\tilde\Gamma_{\gamma,\mu}$ from such neighborhoods. Therefore, by \eqref{myeq7.2.1} there exist $\tilde\kappa>0$, $\tilde \theta\in(0,1/2]$ and $k_1\in\mathbb N$ such that
\begin{equation}\label{myeq7.3}
|f(\Phi^k,\Phi^{k+1},\mathbf e^k,\mathbf e^{k+1})-\mu|^{1-\tilde \theta}\leq \tilde\kappa\lVert F(\Phi^k,\Phi^{k+1},\mathbf e^k,\mathbf e^{k+1})\rVert_{Y_2},
\end{equation}
for any $k\geq k_1$. Since $\lVert\varphi_i^k\rVert, \lVert\varphi_i^{k+1}\rVert=1,\ 1\leq i\leq N$ and $\mathcal F(\Phi^k)\varphi_i^{k+1}=\epsilon_i^{k+1}\varphi_i^{k+1},\ 1\leq i\leq N$, we can see that the $\mathbb R^N\bigoplus \mathbb R^N$ component and the second $\bigoplus_{i=1}^NL^2(\mathbb R^3)$ component of $F(\Phi^k,\Phi^{k+1},\mathbf e^k,\mathbf e^{k+1})$ vanish. Thus we have
$$\lVert F(\Phi^k,\Phi^{k+1},\mathbf e^k,\mathbf e^{k+1})\rVert_{Y_2}=\left(\sum_{i=1}^N\lVert \mathcal F(\Phi^{k+1})\varphi_i^{k}-\epsilon_i^{k}\varphi_i^{k}\rVert_{L^2(\mathbb R^3)}^2\right)^{1/2}.$$
Using $\mathcal F(\Phi^{k-1})\varphi_i^{k}=\epsilon_i^{k}\varphi_i^{k}$ we obtain
\begin{equation}\label{myeq7.4}
\begin{split}
\lVert F(\Phi^k,\Phi^{k+1},\mathbf e^k,\mathbf e^{k+1})\rVert_{Y_2}&=\left(\sum_{i=1}^N\lVert \mathcal F(\Phi^{k+1})\varphi_i^{k}-\mathcal F(\Phi^{k-1})\varphi_i^{k}\rVert_{L^2(\mathbb R^3)}^2\right)^{1/2}\\
&=\left(\sum_{i=1}^N\lVert \mathcal G(\Phi^{k+1})-\mathcal G(\Phi^{k-1}))\varphi_i^{k}\rVert_{L^2(\mathbb R^3)}^2\right)^{1/2}.
\end{split}
\end{equation}

Let us denote by $A_{k+1}^+, A_{k-1}^-$ the $N\times N$ unitary matrices $A$ and $\tilde A$ in Lemma \ref{Ubound} with $\Phi$ and $\tilde \Phi$ replaced by $\Phi^{k+1}$ and $\Phi^{k-1}$ respectively, namely we have
\begin{equation}\label{myeq7.4.1}
\begin{split}
\lVert D_{\Phi^{k+1}}-D_{\Phi^{k-1}}\rVert_2&\geq\lVert A_{k+1}^+\Phi^{k+1}-A_{k-1}^-\Phi^{k-1}\rVert_{\bigoplus_{i=1}^NL^2(\mathbb R^3)}\\
&=\lVert \tilde\Xi^{k+1}_+-\tilde\Xi^{k-1}_-\rVert_{\bigoplus_{i=1}^NL^2(\mathbb R^3)},
\end{split}
\end{equation}
where $\tilde\Xi^{k+1}_+:=A_{k+1}^+\Phi^{k+1}$, $\tilde\Xi^{k-1}_-:=A_{k-1}^-\Phi^{k-1}$. Then it is easily seen that $\mathcal G(\Phi^{k+1})=\mathcal G(\tilde\Xi^{k+1}_+)$, $\mathcal G(\Phi^{k-1})=\mathcal G(\tilde\Xi^{k-1}_-)$. Therefore, recalling Remark \ref{Hboundrem} and using the Hardy inequality and \eqref{myeq7.4.1} we can see that there exists a constant $\breve C$ such that
\begin{align*}
\sum_{i=1}^N\lVert \mathcal G(\Phi^{k+1})-\mathcal G(\Phi^{k-1}))\varphi_i^{k}\rVert_{L^2(\mathbb R^3)}^2&=\sum_{i=1}^N\lVert \mathcal G(\tilde\Xi^{k+1}_+)-\mathcal G(\tilde\Xi^{k-1}_-))\varphi_i^{k}\rVert_{L^2(\mathbb R^3)}^2\\
&\leq \breve C\lVert\tilde\Xi^{k+1}_+-\tilde\Xi^{k-1}_-\rVert_{\bigoplus_{i=1}^NL^2(\mathbb R^3)}^2\\
&\leq \breve C\lVert D_{\Phi^{k+1}}-D_{\Phi^{k-1}}\rVert_{2}^2.
\end{align*}
Combining this inequality, \eqref{myeq7.3} and \eqref{myeq7.4} we obtain
\begin{equation}\label{myeq7.4.2}
|f(\Phi^k,\Phi^{k+1},\mathbf e^k,\mathbf e^{k+1})-\mu|^{1-\tilde\theta}\leq \tilde\kappa\breve C^{1/2}\lVert D_{\Phi^{k+1}}-D_{\Phi^{k-1}}\rVert_{2},
\end{equation}
for $k\geq k_1$.

\noindent\textbf{Step 3.} It follows from \eqref{myeq7.1.1}--\eqref{myeq7.2} and \eqref{myeq7.4.2} that
\begin{equation*}
\begin{split}
&({\mathcal E}(\Phi^k,\Phi^{k+1})-\mu)^{\tilde \theta}-({\mathcal E}(\Phi^{k+1},\Phi^{k+2})-\mu)^{\tilde \theta}\\
&\quad\geq \frac{\tilde \theta}{\tilde\kappa\breve C^{1/2}\lVert D_{\Phi^{k+1}}-D_{\Phi^{k-1}}\rVert_{2}}(2^{-1}\gamma\lVert D_{\Phi^{k+2}}-D_{\Phi^{k}}\rVert_{2}^2),
\end{split}
\end{equation*}
for $k\geq k_1$.
Since the sum of the left-hand side for $k=1,2,\dots$ is finite, the corresponding sum of the right-hand side is also convergent. Setting $\alpha_k:=\lVert D_{\Phi^{k+1}}-D_{\Phi^{k-1}}\rVert_{2}$ this sum is written as
$$\frac{\tilde\theta\gamma}{2\tilde\kappa\breve C^{1/2}}\sum_{k=1}^{\infty}\frac{\alpha_{k+1}^2}{\alpha_k}.$$
Hence by Lemma \ref{series} we can see that
$$\sum_{k=1}^{\infty}\alpha_k=\sum_{k=1}^{\infty}\lVert D_{\Phi^{k+1}}-D_{\Phi^{k-1}}\rVert_{2},$$
is convergent.

Let us define unitary matrices $\tilde A_k$ so that
$$\lVert \tilde A_{k+1}\tilde\Xi_-^{k+1}-\tilde A_{k-1}\tilde\Xi_-^{k-1}\rVert_{\bigoplus_{i=1}^NL^2(\mathbb R^3)}=\lVert\tilde\Xi_+^{k+1}-\tilde\Xi_-^{k-1}\rVert_{\bigoplus_{i=1}^NL^2(\mathbb R^3)},$$
will hold for any $k\geq1$. We set $\tilde A_0:=I, \tilde A_1:=I$, where $I$ is the identity matrix. Assume that $\tilde A_{k-1}$ has been defined. Since $\tilde A_{k-1}$ is unitary and $\Xi_+^{k+1}=A^+_{k+1}(A^-_{k+1})^{-1}\Xi_-^{k+1}$, we have
\begin{align*}
&\lVert\tilde\Xi_+^{k+1}-\tilde\Xi_-^{k-1}\rVert_{\bigoplus_{i=1}^NL^2(\mathbb R^3)}\\
&\quad=\lVert \tilde A_{k-1}A^+_{k+1}(A^-_{k+1})^{-1}\tilde\Xi_-^{k+1}-\tilde A_{k-1}\tilde\Xi_-^{k-1}\rVert_{\bigoplus_{i=1}^NL^2(\mathbb R^3)}.
\end{align*}
Thus we should set $\tilde A_{k+1}:=\tilde A_{k-1}A^+_{k+1}(A^-_{k+1})^{-1}$. Consequently, we obtain 
\begin{align*}
\tilde A_{2k}&=A_2^+(A_2^-)^{-1}\dotsm A_{2(k-1)}^+(A_{2(k-1)}^-)^{-1}A_{2k}^+(A_{2k}^-)^{-1},\\
\tilde A_{2k+1}&=A_3^+(A_3^-)^{-1}\dotsm A_{2(k-1)+1}^+(A_{2(k-1)+1}^-)^{-1}A_{2k+1}^+(A_{2k+1}^-)^{-1},
\end{align*}
for $k\geq 1$. Now set $A_0:=I$, $A_1:=I$ and $A_{2k}:=\tilde A_{2k}A_{2k}^-$, $A_{2k+1}:=\tilde A_{2k+1}A_{2k+1}^-$ for $k\geq 1$. Then if we define $\Xi^k:=A_k\Phi^k$ for $k\geq 0$, we have
\begin{equation}\label{myeq7.4.3}
\lVert \Xi^{k+1}-\Xi^{k-1}\rVert_{\bigoplus_{i=1}^NL^2(\mathbb R^3)}=\lVert \tilde\Xi^{k+1}_+-\tilde\Xi^{k-1}_-\rVert_{\bigoplus_{i=1}^NL^2(\mathbb R^3)},
\end{equation}
for any $k\geq1$. Since $\{\Phi^k\}$ satisfies $\mathcal F(\Phi^k)\varphi_i^{k+1}=\epsilon_i^{k+1}\varphi_i^{k+1}$, $1\leq i\leq N$, we can see that $\Xi^k$ satisfies $\mathcal F(\Xi^k)\xi_i^{k+1}=\sum_{j=1}^N\epsilon_{ij}^{k+1}\xi_j^{k+1}$, $1\leq i\leq N$, where $\epsilon_{ij}^k$ is the $(i,j)$th entry  of the matrix $A_k(\mathrm{diag}\, [\epsilon^k_1,\dots,\epsilon_N^k])A_k^{-1}$. Noting that $A_k$ is a unitary matrix we have $\sum_{i,j=1}^N|\epsilon_{ij}^k|^2=\sum_{i=1}^N|\epsilon_i^k|^2\leq N|\inf\sigma(h)|^2$. Thus we can apply Lemma \ref{reduction} to $\{\Xi^k\}$, which combined with \eqref{myeq7.4.3} and \eqref{myeq7.4.1} yields
\begin{align*}
&\sum_{k=1}^{\infty}\lVert\Xi^{k+1}-\Xi^{k-1}\rVert_{\bigoplus_{i=1}^NH^2(\mathbb R^3)}\\
&\quad\leq2\beta_{\zeta}\sum_{k=1}^{\infty}\lVert\Xi^{k+1}-\Xi^{k-1}\rVert_{\bigoplus_{i=1}^NL^2(\mathbb R^3)} +\lVert\Xi^{2}-\Xi^{0}\rVert_{\bigoplus_{i=1}^NH^2(\mathbb R^3)}\\
&\quad=2\beta_{\zeta}\sum_{k=1}^{\infty}\lVert\tilde\Xi^{k+1}_+-\tilde\Xi_-^{k-1}\rVert_{\bigoplus_{i=1}^NL^2(\mathbb R^3)} +\lVert\Xi^{2}-\Xi^{0}\rVert_{\bigoplus_{i=1}^NH^2(\mathbb R^3)}\\
&\quad\leq 2\beta_{\zeta}\sum_{k=1}^{\infty}\alpha_k+\lVert\Xi^{2}-\Xi^{0}\rVert_{\bigoplus_{i=1}^NH^2(\mathbb R^3)}<\infty,
\end{align*}
with $\zeta:=N^{1/2}|\inf\sigma(h)|$.
Thus there exist limits $\Xi^{\infty}:=\lim_{k\to\infty}\Xi^{2k}$ and $\tilde\Xi^{\infty}:=\lim_{k\to\infty}\Xi^{2k+1}$ in $\bigoplus_{i=1}^NH^2(\mathbb R^3)$. Now noting that $D_{\Xi^k}=D_{\Phi^k}$, that $\lim_{k\to\infty}D_{\Xi^{2k}}=D_{\Xi^{\infty}}$, $\lim_{k\to\infty}D_{\Xi^{2k+1}}=D_{\tilde\Xi^{\infty}}$ with respect to the topology of $\mathcal L(L^2(\mathbb R^3))$, that $D_{\Phi^{2k}}$ and $D_{\Phi^{2k+1}}$ converge in $\mathcal T_2$, and that $\lVert\cdot\rVert_{\mathcal L(L^2(\mathbb R^3))}\leq\lVert\cdot\rVert_{2}$ (cf. \cite[Theorem VI.22 (d)]{RS}) the results in Theorem \ref{main} follow, and the proof is completed.
\end{proof}

\begin{proof}[Proof of Theorem \ref{maincor}]
\noindent (1) Since $\Xi^{2k}$ converges to $\Xi^{\infty}$ in $\bigoplus_{i=1}^NH^2(\mathbb R^3)$, the operator $\mathcal F(\Phi^{2k})-\mathcal F(\Xi^{\infty})=\mathcal G(\Xi^{2k})-\mathcal G(\Xi^{\infty})$ converges to $0$ in $\mathcal L(L^2(\mathbb R^3))$. Thus by the upper semicontinuity of the spectrum (see e.g. \cite[Theorems IV 1.16 and IV 3.18]{Ka}) and the uniform well-posedness, for any $\delta>0$ there exist $k'\in\mathbb N$ and a constant $v\in\mathbb R$ such that the $N$ smallest eigenvalues of $\mathcal F(\Xi^{\infty})$ and $\mathcal F(\Phi^{2k})$ for $k\geq k'$ are smaller than $v$ and the rest of the spectra of them are larger than $v+\gamma-\delta$, which proves (1) for $\mathcal F(\Xi^{\infty})$. The poof for $\mathcal F(\tilde\Xi^{\infty})$ is exactly the same.

\noindent(2) By the proof of (1) there exists a closed curve $g$ in $\mathbb C$ such that the $N$ smallest eigenvalues of $\mathcal F(\Xi^{\infty})$ and $\mathcal F(\Phi^{2k})$ for $k\geq k'$ are enclosed by $g$, and the distances between $g$ and the spectra of $\mathcal F(\Xi^{\infty})$ and $\mathcal F(\Phi^{2k})$ for $k\geq k'$ are larger than $\gamma/3$. Thus using the representation $P_{\Phi}=-(2\pi i)^{-1}\oint_g(\mathcal F(\Phi)-z)^{-1}dz$ of the projections $P_{\Phi}$ to the direct sum of the eigenspaces of $\mathcal F(\Phi)$ we can see that $\lim_{k\to\infty}P_{\Phi^{2k}}=P_{\Xi^{\infty}}$ in $\mathcal L(L^2(\mathbb R^3))$. Hence with $\xi_i^{2k+1}$ in the proof of Theorem \ref{main} we have 
$$\tilde \xi^{\infty}_i=\lim_{k\to\infty}\xi^{2k+1}_i=\lim_{k\to\infty}P_{\Phi^{2k}}\xi_i^{2k+1}=P_{\Xi^{\infty}}\tilde\xi_i^{\infty},\ 1\leq i\leq N,$$
where $\tilde\Xi^{\infty}={}^t(\tilde\xi_1^{\infty},\dots,\tilde\xi_N^{\infty})$. This means that $\tilde\Xi^{\infty}$ is an orthonormal basis of $\mathcal H_{\Xi^{\infty}}:=\mathrm{Ran}\, P_{\Xi^{\infty}}$. In the same way we can also prove that $\Xi^{\infty}$ is an orthonormal basis of the direct sum $\mathcal H_{\tilde\Xi^{\infty}}:=\mathrm{Ran}\, P_{\tilde\Xi^{\infty}}$.

Let $\hat\Phi^{\infty}={}^t(\hat\varphi_1,\dots,\hat\varphi_N)$ be a tuple of the eigenfunctions of $\mathcal F(\Xi^{\infty})$ corresponding to the $N$ smallest eigenvalues $\hat\epsilon_1^{\infty},\dots,\hat\epsilon_N^{\infty}\in\mathbb R$ as above Theorem \ref{maincor}.
Then $\hat\Phi^{\infty}$ is an orthonormal basis of $\mathcal H_{\Xi^{\infty}}$ and
\begin{equation}\label{myeq7.5}
\mathcal F(\Xi^{\infty})\hat\varphi_i^{\infty}=\hat\epsilon_{i}^{\infty}\hat\varphi_i^{\infty},\ 1\leq i\leq N.
\end{equation}
Thus $\hat\Phi^{\infty}$ and $\tilde\Xi^{\infty}$ are orthonormal bases of the same space $\mathcal H_{\Xi^{\infty}}$. Therefore, there exists a unitary matrix $A_{\infty}$ such that $\tilde\Xi^{\infty}=A_{\infty}\hat\Phi^{\infty}$. We note here that $\mathcal F(\tilde\Xi^{\infty})=\mathcal F(\hat\Phi^{\infty})$ also holds.

Next we shall prove $\hat\epsilon_i^{\infty}=\lim_{k\to\infty}\epsilon_i^{2k+1}$, $1\leq i\leq N$. From the proof of Theorem \ref{main} it follows that there exists a Hermitian matrix $(\tilde\epsilon_{ij}^{\infty})$ such that
$$\mathcal F(\Xi^{\infty})\tilde\xi_i^{\infty}=\sum_{j=1}^N\tilde\epsilon_{ij}^{\infty}\tilde\xi_j^{\infty},\ 1\leq i\leq N.$$
Thus by \eqref{myeq7.5} we can see that $\mathrm{diag}\, [\hat\epsilon_1^{\infty},\dots,\hat\epsilon_N^{\infty}]=A_{\infty}^{-1}(\tilde\epsilon_{ij}^{\infty})A_{\infty}$. Since $(\tilde\epsilon_{ij}^{\infty})$ is the limit of the Hermitian matrices $(\epsilon_{ij}^{2k+1})$ whose eigenvalues are $(\epsilon_1^{2k+1},\dots,\epsilon_N^{2k+1})$, the perturbation theorem for the eigenvalues of Hermitian matrices (see e.g. \cite[Problem 1.17]{Ch}) yields $\hat\epsilon_i^{\infty}=\lim_{k\to\infty}\epsilon_i^{2k+1}$, $1\leq i\leq N$.

\noindent(3) If $\Xi^{\infty}=\Theta \tilde\Xi^{\infty}$, we have $\mathcal F(\Xi^{\infty})=\mathcal F(\tilde\Xi^{\infty})$ and thus
$$\mathcal F(\Xi^{\infty})=\mathcal F(\tilde\Xi^{\infty})=\mathcal F(\hat\Phi^{\infty}).$$
Hence by \eqref{myeq7.5} we have
$$\mathcal F(\hat\Phi^{\infty})\hat\varphi_i^{\infty}=\hat\epsilon_{i}^{\infty}\hat\varphi_i^{\infty},\ 1\leq i\leq N,$$
which means that $\hat\Phi^{\infty}$ is a solution to the Hartree-Fock equation.

\noindent(4) Assume that $\hat\Phi^{\infty}$ forms an orthonormal basis of the direct sum of the eigenspaces of the $N$ smallest eigenvalues of $\mathcal F(\hat\Phi^{\infty})$. Then recalling that $\mathcal F(\hat\Phi^{\infty})=\mathcal F(\tilde\Xi^{\infty})$ it follows that $\hat\Phi^{\infty}$ is an orthonormal basis of $\mathcal H_{\tilde\Xi^{\infty}}$. Since $\hat\Phi^{\infty}$ is an orthonormal basis also of $\mathcal H_{\Xi^{\infty}}$, we have $\mathcal H_{\tilde\Xi^{\infty}}=\mathcal H_{\Xi^{\infty}}$, which implies that $\Xi^{\infty}$ and $\tilde \Xi^{\infty}$ are orthonormal bases of the same space.
Therefore, there exists a unitary matrix $\Theta$ such that $\Xi^{\infty}=\Theta\tilde\Xi^{\infty}$.

\noindent(5) This result follows from (2) if we prove that the necessary and sufficient condition that $\Phi,\tilde \Phi\in\mathcal W$ satisfy $D_{\Phi}=D_{\tilde\Phi}$ is that there exists a unitary matrix $\hat A$ such that $\Phi=\hat A\tilde\Phi$. The sufficiency is obvious. The necessity follows from that $\Phi$ is an orthonormal basis of $\mathrm{Ran}\, D_{\Phi}$, which was also mentioned in Section \ref{firstsec}.
\end{proof}

\end{document}